\numberwithin{equation}{section}
\numberwithin{figure}{section}
\theoremstyle{plain}
\newtheorem{thm}{\protect\theoremname}
  \theoremstyle{plain}
  \newtheorem{lem}[thm]{\protect\lemmaname}
  \theoremstyle{remark}
  \newtheorem{rem}[thm]{\protect\remarkname}
  \theoremstyle{definition}
  \newtheorem{example}[thm]{\protect\examplename}
  \theoremstyle{plain}
  \newtheorem{cor}[thm]{\protect\corollaryname}
  \theoremstyle{plain}
  \newtheorem{prop}[thm]{\protect\propositionname}
\numberwithin{thm}{section}
  \providecommand{\corollaryname}{Corollary}
  \providecommand{\examplename}{Example}
  \providecommand{\lemmaname}{Lemma}
  \providecommand{\propositionname}{Proposition}
  \providecommand{\remarkname}{Remark}
\providecommand{\theoremname}{Theorem}
\begin{document}

\title{Outliers in the spectrum of large deformed unitarily invariant models}

\author{S. T. Belinschi, H. Bercovici, M. Capitaine, and M. F\'evrier}

\address{S. T. Belinschi: CNRS - Institut de Math\'ematiques de Toulouse and Queen's University 
and Institute of Mathematics ``Simion Stoilow'' of the Romanian Academy;
IMT\\
118 Rte de Narbonne\\
F-31062 Toulouse Cedex 09, France.}

\email{serban.belinschi@math.univ-toulouse.fr}

\address{H. Bercovici: Indiana University\\
Department of Mathematics\\
Indiana University\\
Bloomington, IN 47405, USA.}

\email{bercovic@indiana.edu}

\address{M. Capitaine:
CNRS, Institut de Math\'ematiques de Toulouse\\
118 Route de Narbonne\\
F-31062 Toulouse Cedex 09, France.}

\email{mireille.capitaine@math.univ-toulouse.fr}

\address{M. F\'evrier: Universit\'e Paris Sud\\
Universit\'e Paris Sud, Laboratoire de Math\'ematiques \\
B\^{a}t. 425 91405 Orsay Cedex, France.}

\email{maxime.fevrier@math.u-psud.fr}

\thanks{Research of HB was supported in part by NSF grants. For part of the work, STB was supported 
by a Discovery Grant from the Natural Sciences and Engineering Research Council of Canada and
a RIG from Queen's University. The work on this paper was initiated 
while STB was visiting the Institute of Mathematics of Toulouse as ``Ma\^itre de conf\'erence 
invit\'e'' and largely completed while all four authors visited the Fields Institute during the 
2013 special programme on non-commutative distributions. We gratefully acknowledge the 
hospitality of the institutions involved. We also thank Charles Bordenave for providing us the simulation
from Example \ref{ex:5.14}}

\begin{abstract}
In this paper we characterize the possible outliers in the spectrum of large deformed unitarily
invariant additive and multiplicative models, as well as the eigenvectors corresponding to them.
We allow  both the non-deformed unitarily invariant model and the perturbation matrix  to have  non-trivial  limiting spectral measures and  spiked outliers in their spectrum.
We uncover a remarkable new phenomenon: a 
single spike  can generate asymptotically several
outliers in the spectrum of the deformed model. The free subordination functions play a key role in this analysis.
\end{abstract}
\maketitle

\section{Introduction}

\subsection{Statement of the problem}\markboth{}{}The set of possible spectra for the sum of two deterministic Hermitian 
matrices $A_{N}$ and $B_{N}$ depends in complicated ways on the
spectra of $A_{N}$ and $B_{N}$ (see \cite{Fulton98}). Nevertheless,
if one adds some randomness to the eigenspaces of $B_{N}$ then, as
$N$ becomes large, free probability provides a good understanding
of the behavior of the spectrum of this sum.
 More precisely, set
$X_{N}=A_{N}+U_{N}^{*}B_{N}U_{N}$,
where $U_{N}$ is a random unitary  matrix distributed
according to  the Haar measure on the unitary group ${\rm U}(N)$, and suppose that the empirical 
eigenvalue distributions of $A_{N}$ and $B_{N}$ converge weakly
to compactly supported distributions $\mu$ and $\nu$, respectively.
Building on the groundbreaking result of Voiculescu \cite{Voiculescu91},
Speicher proved in \cite{Speicher93a} the almost sure weak convergence
of the empirical eigenvalue distribution of $X_{N}$ to the free additive convolution
$\mu\boxplus\nu$. This convolution is again a compactly supported
probability measure on $\mathbb{R}$. A similar result holds for products of matrices: if 
$A_N,B_N$ are in addition assumed to be nonnegative definite, then 
the empirical eigenvalue distribution of $A_N^{1/2}U_{N}^{*}B_{N}U_{N}A_N^{1/2}$ 
converges to the free multiplicative convolution $\mu\boxtimes\nu$, a compactly
supported probability measure on $[0,+\infty)$. (We recall that
$A_N^{1/2}U_{N}^{*}B_{N}U_{N}A_N^{1/2}$ and $B_N^{1/2}U_{N}^{*}A_{N}U_{N}B_N^{1/2}$
have the same eigenvalue distribution, and that $\boxtimes$ is a commutative operation.) Finally, if
$A_N$ and $B_N$ are deterministic unitary matrices, their empirical eigenvalue distributions are 
supported
on the unit circle $\mathbb T=\{z\in\mathbb C\colon |z|=1\}$ and 
the empirical  eigenvalue distribution of $A_NU_{N}^{*}B_{N}U_{N}$ 
converges to the free multiplicative convolution $\mu\boxtimes\nu$, a 
probability measure supported on $\mathbb T$.
(We refer the reader to \cite{VDN92}
for an introduction to free probability theory. We describe later
the mechanics of calculating the free convolutions $\boxplus$ and $\boxtimes$.)

The fact that the empirical eigenvalue distribution of $X_{N}$  converges weakly to
$\mu\boxplus\nu$ does not mean that all the eigenvalues of $X_{N}$
are close to the support of this measure. There can be outliers, though
they must not affect the limiting empirical eigenvalue distribution. Sometimes one can argue
that these outliers must in fact exist. For instance, the case when the rank $r$ of $A_N$ 
and its nonzero eigenvalues are fixed is investigated by Benaych-Georges and Nadakuditi in 
\cite{BGRao09}. Denote by 
\[
\gamma_{1}\geq\cdots\geq\gamma_{s}>0>\gamma_{s+1}\geq\cdots\geq\gamma_{r}
\]
these fixed eigenvalues. Of course, in this case $\mu=\delta_{0}$
is a point mass at $0$, so the {\em limiting} behavior of the empirical eigenvalue
distribution of $X_{N}$ is not affected by such a matrix $A_{N}$.
More precisely, the empirical  eigenvalue distribution of $X_{N}=A_{N}+U_{N}^{*}B_{N}U_{N}$
converges almost surely  to the limiting spectral distribution $\nu$ of $B_{N}$.
One can however detect, among the outlying eigenvalues of $X_{N}$,
the influence of the eigenvalues of $A_{N}$ above a certain critical
threshold. We use the notation
\[
\lambda_{1}(X)\geq\cdots\geq\lambda_{N}(X)
\]
 for the eigenvalues of an $N\times N$ matrix $X$,
 repeated according to multiplicity. The Cauchy-Stieltjes transform
of a finite positive Borel measure $\nu$ on $\mathbb{R}$ is given by
$$
G_{\nu}(z)=\int_{\mathbb{R}}\frac{d\nu(t)}{z-t}
$$
for $z$ outside the support of $\nu$, and $G_{\nu}^{-1}$ denotes
the inverse of this function relative to composition. When the support
of $\nu$ is contained in the compact interval $[a,b]$, the branch of the inverse function
$G_{\nu}^{-1}$ that satisfies $G_\nu^{-1}(0)=\infty$  is defined and real-valued on the interval $(\alpha,\beta)$, where
\[
\alpha=\lim_{x\uparrow a}G_{\nu}(x),\quad\beta=\lim_{x\downarrow b}G_{\nu}(x).
\]
The following result is proved in \cite[Theorems 2.1 and 2.2]{BGRao09}.
\begin{thm}\label{thm:RRao}
   \ 
\begin{enumerate}
\item
Denote by $a$ and $b$ the infimum and supremum
of the support of $\nu$, respectively. Assume that the smallest and largest eigenvalues
of $B_{N}$ converge almost surely to $a$ and $b$, respectively.
Then, almost surely for $1\le i\le s$, 
\[
\lim_{N\to\infty}\lambda_{i}(X_{N})=\begin{cases}
G_{\nu}^{-1}(1/\gamma_{i}), & \gamma_{i}>1/\beta,\\
b, & \text{ otherwise.}
\end{cases}
\]
Similarly, almost surely for $0\le j\le r- s-1$, 
\[
\lim_{N\to\infty}\lambda_{N-j}(X_{N})=\begin{cases}
G_{\nu}^{-1}(1/\gamma_{r-j}), & \gamma_{r-j}<1/\alpha,\\
a, & \text{ otherwise.}
\end{cases}
\]
\item Fix $i_0\in\{1,\dots,r\}$ such that $1/\gamma_{i_0}\in(\alpha,\beta)$. For each $N$
define 
\[
\lambda(N)=\begin{cases}
\lambda_{i_0}({X_N}), & \gamma_{i_0}>0,\\
\lambda_{N-r+i_0}({X_N}), & \gamma_{i_0}<0.
\end{cases}
\]
and let $u_N$ be a unit-norm eigenvector of ${X_N}$
associated to the eigenvalue $\lambda(N)$. Then the following almost sure 
limits hold{\em:}
$$
\lim_{N\to\infty}\|P_{\ker(\gamma_{i_0}I_N-A_N)}u_N\|^2=
\frac{-1}{\gamma_{i_0}^2G_{\nu}'\left(G_{\nu}^{-1}(1/\gamma_{i_0})\right)},
$$
and
$$
\lim_{N\to\infty}\|P_{\ker(\gamma_{i}I_N-A_N)}u_N\|^2=0,\quad i\ne i_0.$$
\end{enumerate}
\end{thm}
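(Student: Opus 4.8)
The plan is to turn the eigenvalue problem for $X_N=A_N+W_N$, $W_N:=U_N^{*}B_NU_N$, into a fixed, $r\times r$ secular equation and then to pass to the limit in that equation using concentration of random bilinear forms. Write $A_N=\Xi_N\Gamma\Xi_N^{*}$ with $\Gamma=\mathrm{diag}(\gamma_1,\dots,\gamma_r)$ and $\Xi_N$ an $N\times r$ matrix whose columns $\xi_1,\dots,\xi_r$ are orthonormal eigenvectors of $A_N$. For $\rho$ outside the spectrum of $B_N$, the Sylvester determinant identity gives
\[
\det(\rho I_N-X_N)=\det(\rho I_N-W_N)\det\bigl(I_r-\Gamma M_N(\rho)\bigr),\qquad M_N(\rho):=\Xi_N^{*}(\rho I_N-W_N)^{-1}\Xi_N ,
\]
so on any interval disjoint from $\mathrm{supp}\,\nu$ the eigenvalues of $X_N$, with multiplicities, are exactly the zeros of $\rho\mapsto\det(I_r-\Gamma M_N(\rho))$. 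Since $(\rho I_N-W_N)^{-1}=U_N^{*}(\rho I_N-B_N)^{-1}U_N$ and $U_N\Xi_N$ is distributed as the first $r$ columns of a Haar unitary, the entries of $M_N(\rho)$ are bilinear forms $\langle y_j,(\rho I_N-B_N)^{-1}y_k\rangle$ in a uniformly distributed orthonormal $r$-frame $(y_1,\dots,y_r)$. The first step is the concentration estimate: for deterministic $C_N$ with $\sup_N\|C_N\|<\infty$ one has $\langle y_j,C_Ny_k\rangle-\delta_{jk}\frac1N\mathrm{Tr}\,C_N\to0$ almost surely, with uniformity in the spectral parameter. Applying this to $C_N=(\rho I_N-B_N)^{-1}$, and using $\frac1N\mathrm{Tr}(\rho I_N-B_N)^{-1}=G_{\mu_{B_N}}(\rho)\to G_\nu(\rho)$, yields $M_N(\rho)\to G_\nu(\rho)I_r$ almost surely, locally uniformly on $\mathbb C\setminus\mathrm{supp}\,\nu$; differentiating, $\Xi_N^{*}(\rho I_N-W_N)^{-2}\Xi_N=-M_N'(\rho)\to -G_\nu'(\rho)I_r$ there as well.

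Granting this, part~(1) follows by a zero-counting argument. First, $\|X_N\|\le\|A_N\|+\|B_N\|$ is bounded, so all eigenvalues lie in a fixed compact set; fix $R$ exceeding $\sup_N\|X_N\|$. On $(b+\epsilon,R)$, eventually disjoint from $\mathrm{spec}(W_N)=\mathrm{spec}(B_N)$, the eigenvalues of $X_N$ are the zeros of $\det(I_r-\Gamma M_N(\cdot))$, which converges uniformly to $h(\rho):=\prod_{k=1}^{r}\bigl(1-\gamma_kG_\nu(\rho)\bigr)$; by Hurwitz's theorem their number there converges, with multiplicity, to the number of zeros of $h$ in $(b+\epsilon,R)$. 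Since $G_\nu$ is positive and strictly decreasing from $\beta$ to $0$ on $(b,+\infty)$, the zeros of $h$ in $(b,+\infty)$ are precisely $G_\nu^{-1}(1/\gamma_1)>\cdots>G_\nu^{-1}(1/\gamma_p)>b$, where $p:=\#\{k:\gamma_k>1/\beta\}$. Hence, for small $\epsilon$ and large $N$, $X_N$ has exactly $p$ eigenvalues above $b+\epsilon$, and $\lambda_i(X_N)\to G_\nu^{-1}(1/\gamma_i)$ for $1\le i\le p$. For $p<i\le s$ we then have $\limsup_N\lambda_i(X_N)\le b$; on the other hand Weyl's inequality gives $\lambda_i(X_N)\ge\lambda_{i+(r-s)}(W_N)=\lambda_{i+(r-s)}(B_N)$, and for each fixed $k$ one has $\lambda_k(B_N)\to b$ (this follows from $\lambda_1(B_N)\to b$ together with weak convergence of the empirical spectral distribution of $B_N$ to $\nu$ and $b\in\mathrm{supp}\,\nu$), so $\liminf_N\lambda_i(X_N)\ge b$; thus $\lambda_i(X_N)\to b$. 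Applying all of this to $-X_N=(-A_N)+U_N^{*}(-B_N)U_N$ gives the corresponding statements for the smallest eigenvalues.

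For part~(2), note first that when $\gamma_{i_0}>0$ the hypothesis $1/\gamma_{i_0}\in(\alpha,\beta)$ is the same as $\gamma_{i_0}>1/\beta$, and when $\gamma_{i_0}<0$ it is the same as $\gamma_{i_0}<1/\alpha$; in either case part~(1) gives $\rho_N:=\lambda(N)\to\rho_{i_0}:=G_\nu^{-1}(1/\gamma_{i_0})\notin\mathrm{supp}\,\nu$. Set $c_k=\langle\xi_k,u_N\rangle$ and $c=(c_1,\dots,c_r)$. From $X_Nu_N=\rho_Nu_N$ we get $(\rho_NI_N-W_N)u_N=A_Nu_N=\sum_k\gamma_kc_k\xi_k$, hence $u_N=(\rho_NI_N-W_N)^{-1}\sum_k\gamma_kc_k\xi_k$; projecting onto each $\xi_j$ shows that $c$ solves $\bigl(I_r-M_N(\rho_N)\Gamma\bigr)c=0$. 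Because $M_N(\rho_N)\to G_\nu(\rho_{i_0})I_r=\gamma_{i_0}^{-1}I_r$, the limiting linear system is diagonal, with the only vanishing entry in position $i_0$; since $c\ne0$ (otherwise $(\rho_NI_N-W_N)u_N=0$) and $\|c\|\le\|u_N\|=1$, this forces $c_k\to0$ for $k\ne i_0$ and $|c_{i_0}|\to\|c\|$, which already yields $\|P_{\ker(\gamma_iI_N-A_N)}u_N\|^2=|c_i|^2\to0$ for $i\ne i_0$. To identify the surviving weight, expand $\|u_N\|^2$:
\[
1=\langle u_N,u_N\rangle=\sum_{j,k}\gamma_j\gamma_k\,\overline{c_j}\,c_k\,\bigl(\Xi_N^{*}(\rho_NI_N-W_N)^{-2}\Xi_N\bigr)_{jk}.
\]
In the limit only the term $j=k=i_0$ survives (the rest are killed by $c_k\to0$ for $k\ne i_0$ and by the off-diagonal decay of the $r\times r$ matrix), and since $\bigl(\Xi_N^{*}(\rho_NI_N-W_N)^{-2}\Xi_N\bigr)_{i_0i_0}\to-G_\nu'(\rho_{i_0})>0$, we obtain $\gamma_{i_0}^{2}\bigl(-G_\nu'(\rho_{i_0})\bigr)\lim_N|c_{i_0}|^2=1$, that is,
\[
\lim_{N\to\infty}\|P_{\ker(\gamma_{i_0}I_N-A_N)}u_N\|^2=\lim_{N\to\infty}|c_{i_0}|^2=\frac{-1}{\gamma_{i_0}^{2}\,G_\nu'\bigl(G_\nu^{-1}(1/\gamma_{i_0})\bigr)}.
\]

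The main obstacle is the concentration estimate of the first step, which must be made uniform in the spectral parameter and hold almost surely, so that the subsequent Hurwitz-type counting and the passage $\rho_N\to\rho_{i_0}$ inside the bilinear forms are legitimate; the remaining difficulties are routine bookkeeping, namely matching the escaping eigenvalues with the indices $\lambda_i(X_N)$ and establishing the ``sticking'' bound $\liminf_N\lambda_i(X_N)\ge b$ for the subcritical spikes.
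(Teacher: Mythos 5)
Your proof is correct, and it essentially reconstructs the argument of Benaych-Georges and Nadakuditi in \cite{BGRao09}, which is in fact the reference this paper cites for Theorem~\ref{thm:RRao} rather than proving it anew. The route you take is, however, genuinely different from the machinery developed in the body of the present paper (Theorems~\ref{Main+}--\ref{MainX+}). You exploit the very special structure of the case $\mu=\delta_0$: after Sylvester's identity, the $r\times r$ matrix $M_N(\rho)=\Xi_N^*(\rho I_N-W_N)^{-1}\Xi_N$ involves a Haar-distributed orthonormal $r$-frame $U_N\Xi_N$, and concentration of bilinear forms immediately gives $M_N(\rho)\to G_\nu(\rho)I_r$; the subordination function $\omega_1=F_\nu$ never has to be mentioned. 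The present paper, aiming at a limiting measure $\mu$ that need not be a point mass, cannot reduce to such a simple scalar limit. Instead it decomposes $A_N=A_N'+A_N''$ with a full-rank ``bulk'' $A_N'$, proves via Biane's subordination and the commutant Lemma~\ref{bicommutant} that a compression of $\mathbb E[(zI_N-X_N')^{-1}]^{-1}+A_N'$ is approximately a scalar multiple of the identity equal to $\omega_1(z)$, and handles the eigenvector statement not by your direct computation $\|u_N\|^2=1$ but via a Helffer--Sj\"ostrand type regularization (Lemmas~\ref{HT} and~\ref{approxcauchy}) together with a perturbation argument to separate coalescing spikes. What your approach buys is elegance and elementariness, including the neat Weyl-inequality ``sticking'' bound; what the paper's approach buys is generality. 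As you note yourself, the one piece that has to be carried out carefully in your write-up is promoting the almost-sure convergence $\langle y_j,(\rho I_N-B_N)^{-1}y_k\rangle\to\delta_{jk}G_\nu(\rho)$ from pointwise in $\rho$ to locally uniform on compacts away from $[a,b]$; this follows from a countable dense set of $\rho$, uniform resolvent bounds once $\mathrm{spec}(B_N)\subset[a-\delta,b+\delta]$, and a normal-family argument, and it is precisely the analogue of Lemma~\ref{concentration}. Finally, a small remark on hypotheses: in part~(2) your argument, like the original, tacitly uses that $\gamma_{i_0}$ is a simple eigenvalue of $A_N$ (so that $\ker(\gamma_{i_0}I_N-A_N)=\mathbb C\xi_{i_0}$ and the limiting $r\times r$ system has a one-dimensional kernel); this is indeed implicit in the statement as the two displayed limits would otherwise be contradictory.
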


This result lies in the lineage of recent, and not so recent, works studying the influence
of fixed rank additive or multiplicative perturbations on the extremal
eigenvalues of classical random matrix models, the seminal paper being
\cite{BBP05}, where the so-called BBP phase transition was observed.
See also \cite{John, BBP05, BaikSil06} for sample covariance matrices,
\cite{FurKom81, Peche06,FePe, CDF09, PRS} for deformed Wigner models
and \cite{LV} for information-plus-noise models. These investigations were
first extended to perturbations of arbitrary 
rank in \cite{RaoSil09} and \cite{BaiYao08b} for sample covariance
matrices,  in \cite{CDFF10} for deformed Wigner models and 
in \cite{Capitaine14} for information-plus-noise models. 
It is pointed out in \cite{CDFF10}  that the subordination function
(relative to the free additive convolution of a semicircular distribution
with the limiting spectral distribution of the perturbation) plays
an important role in the fact that some eigenvalues of the deformed
Wigner model separate from the bulk. Note that in \cite{Capitaine11}
it is explained how the results of \cite{RaoSil09} and \cite{BaiYao08b}
in the setting of sample covariance matrices can also be described in terms
of the subordination function related to the free multiplicative convolution
of a Marchenko-Pastur distribution with the limiting spectral distribution of the multiplicative
perturbation. 

Similar results have been obtained in \cite[Theorems 2.7 and 2.8]{BGRao09} for 
multiplicative perturbations of the type $A_N^{1/2}U_{N}^{*}B_{N}U_{N}A_N^{1/2}$, where
$A_N-I_N\ge0$ is of fixed rank $p\in\mathbb N$ and $B_N\ge0$.

In this paper we investigate the asymptotic behaviour of the eigenvalues and corresponding
eigenvectors of the following models:
\begin{itemize}
\item $X_N=A_N+U_N^*B_NU_N$, where $A_N=A_N^*,B_N=B_N^*$ are deterministic, and $U_N$ 
is a Haar-distributed unitary random matrix;
\item $X_N=A_N^{1/2}U_{N}^{*}B_{N}U_{N}A_N^{1/2},$ where $A_N,B_N\ge0$ are deterministic, 
and $U_N$ is a Haar-distributed unitary random matrix;
\item $X_N=A_NU_N^*B_NU_N$, where $A_N,B_N\in {\rm U}(N)$ are deterministic unitary matrices and 
$U_N$ is a Haar-distributed unitary random matrix.
\end{itemize}
In the first two models we assume that $A_N$ and $B_N$ have compactly supported limiting 
eigenvalue distributions $\mu$ and $\nu$, respectively. In the third model we assume that the
supports of $\mu$ and $\nu$ are not equal to the entire unit circle. A fixed number 
$p\in\mathbb N$ of eigenvalues of $A_N$---the {\em spikes}---lay outside the support of 
$\mu$ for all $N\in\mathbb N$, but all other eigenvalues of $A_N$ converge uniformly to the 
support of $\mu$. A similar hypothesis is made about $B_N$ and $\nu$. We answer the following 
questions:
\begin{itemize}
\item When are some of the eigenvalues of $X_N$ almost surely located outside the support of the 
limiting spectral distribution $\mu\boxplus\nu$ (respectively $\mu\boxtimes\nu$) of $X_N$? In
other words, when does the spectrum of $X_N$ have {\em outliers} almost surely?
\item What is the behaviour of the eigenvectors corresponding to the outliers of $X_N$?
\end{itemize}

When there are no spikes,
it was shown in \cite{ColMal11} that, given a neighborhood $V$ of the support of $\mu\boxplus\nu$,
the eigenvalues of $X_{N}$ are almost surely contained in $V$
for large $N\in\mathbb{N}$. This paper extends the results of \cite{BGRao09}
to perturbations of arbitrary rank, and it also extends the free probabilistic
interpretation of outlier phenomena in terms of subordination functions
as described in \cite{CDFF10} for deformations of Wigner models.
Indeed, the occurence and role of Biane's subordination functions \cite{Biane98} in the 
analysis of the interaction spikes/outliers is quite natural. We clarify this in the 
additive case through the following heuristics, leaving the precise statements to Section
\ref{sec:Notation-and-Statements}.
\subsection{Heuristics}
Let ${\bf a}$ and ${\bf b}$ be two free selfadjoint random variables in a tracial W*-probability space. 
Biane showed \cite[Theorem 3.1]{Biane98} that there exists an analytic self-map $\omega\colon\mathbb
C^+\to\mathbb C^+$ of the upper half-plane $\mathbb C^+=\{x+iy\colon y>0\}$ 
(depending on ${\bf a,b}$) so that 
\begin{equation}\label{BianeCondExp}
\mathbb E_{\mathbb C[{\bf a}]}\left[(z-({\bf a+b}))^{-1}\right]=(\omega(z)-{\bf a})^{-1},
\quad z\in\mathbb C^+.
\end{equation}
Here $\mathbb E_{\mathbb C[{\bf a}]}$ denotes the conditional expectation onto the
von Neumann algebra generated by ${\bf a}$ and $1$. The function $\omega$ is referred to as the {\em
subordination function}. (This formula is a particular case of Biane's 
result. Formula \eqref{BianeCondExp} appears in this form in \cite[Appendix]{Voiculescu00}.) 
The subordination function continues analytically via Schwarz reflection through the 
complement in $\mathbb R$ of the spectrum of ${\bf a+b}$. If $A_N\to{\bf a}$ in distribution as
$N\to\infty$, while a single eigenvalue $\theta$, common to all of the matrices $A_N$, stays outside the 
spectrum of ${\bf a}$, this eigenvalue will disappear in the large $N$ limit, in the sense that it will not 
influence the spectrum of ${\bf a}$. Thus, the analytic function $\omega(z)$ will  not 
be prevented {\it a priori} from taking the value $\theta$.

However, if relation \eqref{BianeCondExp} were true with ${\bf a}$ and ${\bf b}$ replaced by
$A_N$ and $U_{N}^{*}B_{N}U_{N}$, respectively, and with the {\em same} subordination function 
$\omega$,
then any number $\rho$ in the domain of analyticity of $\omega$ so that $\omega(\rho)
=\theta$ must generate a polar singularity for the right-hand side of \eqref{BianeCondExp}.
Therefore, each such $\rho$ must generate a similar singularity for the term on the left-hand side
of the same equality, thus necessarily producing an eigenvalue of $A_N+U_{N}^{*}B_{N}U_{N}$. 
While this scenario is not true, we prove that an approximate version does hold. Namely, we show  that a 
compression of the matrix 
$$
\mathbb E\left[(z-(A_N+U_N^*B_NU_N))^{-1}\right]^{-1}+A_N
$$
to a subspace $V_N$ is close to $\omega(z)I_{V_N}$, almost surely as $N\to\infty$. This insight is crucial 
in our arguments.

It follows from our results that a remarkable new phenomenon occurs: a 
single spike of $A_{N}$ can generate asymptotically a finite, possibly arbitrarily large,
set of outliers for $X_{N}$. This arises from the fact that the restriction
to the real line of some subordination functions may be many-to-one, that is, with the  above
notation, the set $\omega^{-1}(\{\theta\})$ may have cardinality strictly
greater than 1, unlike the subordination function related to free convolution with
a semicircular distribution that was used in \cite{CDFF10}.

The case of multiplicative perturbations is based on similar ideas, with
the subordination function $\omega$ replaced by its multiplicative counterparts 
\cite[Theorems 3.5 and 3.6]{Biane98}. 

In addition to outliers, we investigate the corresponding eigenspaces of $X_N$. It turns out that 
the angle between these eigenvectors and the eigenvectors associated to the original spikes
is determined by Biane's subordination function, this time via its derivative.

The paper is organized as follows. In Section 2, we describe in detail the matrix models
to be analysed, and state the main results of the paper. In Section 3 we introduce  
free convolutions and the analytic transforms involved in their study. We give the functional 
equations characterising the subordination functions. In Section 4 we collect and prove a number of of auxiliary results, and in Section 5 we prove the main results.

\section{Notation and statements of the main results\label{sec:Notation-and-Statements}}

We denote by $\mathbb{C}^{+}=\{z\in\mathbb{C}\colon\Im z>0\}$ the  upper half-plane,
by $\mathbb{C}^{-}=\{z\in\mathbb{C}\colon\Im z<0\}$ the lower half-plane, and by
$\mathbb D=\{z\in\mathbb C\colon|z|<1\}$ the unit disc in $\mathbb{C}$. The
topological boundary of the unit disc is denoted by $\mathbb T=\partial\mathbb D$.
For any vector subspace $E$ of $\mathbb{C}^m$, $P_E$ denotes the orthogonal projection onto $E$.
$M_{m}(\mathbb{C})$ stands for the set of $m\times m$ matrices with complex entries, 
 ${\rm GL}_{m}(\mathbb{C})$ for the subset of invertible matrices, and ${\rm U}(m)\subset
{\rm GL}_{m}(\mathbb{C})$ for the unitary group. The operator norm of a 
matrix $X$ is $\|X\|$, its spectrum is $\sigma(X)$ its kernel is $\ker X$, its trace is $\mathrm{Tr}_m(X)$
and its normalized trace is $\mathrm{tr}_m(X)=\frac1m\mathrm{Tr}_m(X)$. The 
eigenvalues of a Hermitian matrix $X$ are denoted 
\[
\lambda_{1}(X)\geq\cdots\geq\lambda_{m}(X),
\]
and the probability measure
\[
\mu_{X}=\frac{1}{m}\sum_{i=1}^{m}\delta_{\lambda_{i}(X)}
\]
is the empirical  eigenvalue distribution of $X$. When $X$ is unitary, its eigenvalues are ordered 
decreasingly according to the size of their principal arguments in $[0,2\pi)$. 
If $X\in M_m(\mathbb C)$ is a normal matrix, we denote by $E_X$ its spectral measure. Thus, if 
$S\subseteq \mathbb C$ is a Borel set, then $E_X(S)$ is the orthogonal projection onto the linear span of
all eigenvectors of $X$ corresponding to eigenvalues in $S$. The support 
of a measure $\rho$ on $\mathbb{C}$ is denoted $\text{supp}(\rho)$. Given any set
$K\subseteq\mathbb R$, we define its $\varepsilon$-neighbourhood by 
$$
K_\varepsilon=\left\{x\in\mathbb R\colon\inf\{|x-y|\colon y\in K\}<\varepsilon\right\}.
$$
As long as there is no risk of confusion, the same notation will be used when $K$ and $K_\varepsilon$ are 
subsets of $\mathbb T$. Open intervals in $\mathbb R$ and open arcs in $\mathbb T$ are denoted 
$(a,b)$.

As already seen in Section 1, the Cauchy (or Cauchy-Stieltjes) transform of a finite positive Borel measure
$\rho$ on $\mathbb C$ is an analytic function
defined by
\begin{equation}\label{CauchyTr2}
G_\rho(z)=\int_\mathbb C\frac{1}{z-t}\,d\rho(t),\quad z\in\mathbb C\setminus\text{supp}(\rho).
\end{equation}
IWe only consider finite measures $\rho$ supported  $\mathbb R$ or 
$\mathbb T$. We denote by $F_\rho$ the reciprocal of $G_\rho,$ that is, $F_\rho(z)=1/G_\rho
(z)$. The moment generating function for $\rho$ is
\begin{equation}\label{MomGen1}
\psi_\rho(z)=\int_\mathbb C\frac{tz}{1-tz}\,d\rho(t),\quad z\in\mathbb C\setminus\left\{
z\in\mathbb C \colon\frac1z\not\in\text{supp}(\rho)\right\}.
\end{equation}
The  $\eta$-transform of $\rho$ is defined as 
$$
\eta_\rho(z)=\frac{\psi_\rho(z)}{1+\psi_\rho(z)}.
$$ 
The relevant analytic properties of the transforms above are presented in Subsections 
\ref{sec:boxplus}--\ref{sec:boxtimesT}.

The {\em free additive convolution} of the Borel probability measures $\mu$ and $\nu$ on 
$\mathbb R$ is denoted by $\mu\boxplus\nu$ and the {\em free multiplicative convolution} 
of the Borel probability measures $\mu$ and $\nu$ either on $[0,+\infty)$ or on $\mathbb T$
is denoted by $\mu\boxtimes\nu$. Given $\mu,\nu$, denote by $\omega_1$
and $\omega_2$ the {\em subordination functions} corresponding to the free convolution 
$\mu\boxplus\nu$. They are known to be analytic on the complement of $\text{supp}(\mu\boxplus
\nu)$. As the name suggests, they satisfy an analytic subordination property in the sense of
Littlewood:
\begin{equation}\label{subord1}
G_{\mu\boxplus\nu}(z)=G_\mu(\omega_1(z))=G_\nu(\omega_2(z)).
\end{equation}
A similar result holds for the multiplicative counterparts of the subordination functions. We have:
\begin{equation}\label{subord1'}
\psi_{\mu\boxtimes\nu}(z)=\psi_\mu(\omega_1(z))=\psi_\nu(\omega_2(z)),
\end{equation}
where $\omega_1$ and $\omega_2$ are analytic on $\{z\in\mathbb C\colon1/z\not\in\text{supp}
(\mu\boxtimes\nu)\}$. Free convolutions are defined in subsections 
\ref{sec:boxplus}--\ref{sec:boxtimesT}, and the subordination functions are defined via
functional equations in subsections \ref{subsubsec:eq+}--\ref{subsubsec:eqT}

In the following three subsections we describe our models and state the main results. To avoid
dealing with too many special cases, we make the following technical assumption, which will be
in force for the remainder of the paper, except for Remark \ref{atoms}.
\begin{equation}\label{hyp:2.5}
\text{Neither of the two limiting measures }\mu,\nu\text{ is a point mass}.
\end{equation}
Under this assumption, the subordination functions extend continuously to the boundary
(see Lemmas \ref{lem:extension}, \ref{lem:extensionX}, and \ref{lem:extensionT}). Our results
however remain substantially valid without this assumption, and we discuss in Remark \ref{atoms} the 
relevant modifications.

\subsection{Additive perturbations\label{subsec:+perturb}}
Here are the ingredients for constructing the additive matrix 
model $X_N=A_N+U_N^*B_NU_N$:
\begin{itemize}
\item Two compactly supported Borel probability measures $\mu$ and $\nu$ on $\mathbb 
R$.
\item A positive integer $p$ and
fixed real numbers $$\theta_{1}\ge\theta_{2}\ge\cdots\ge\theta_{p}$$
which do not belong to $\text{supp}(\mu)$.

\item A sequence $(A_{N})_{N\in\mathbb N}$ of deterministic Hermitian matrices of size
$N\times N$ such that 

\begin{itemize}
\item   $\mu_{A_{N}}$ converges weakly to $\mu$ as $N\to\infty$;

\item for $N\ge p$ and $\theta\in\{\theta_1,\dots,\theta_p\}$, the sequence $(\lambda_{n}(A_{N}))_{n=1}^{N}$ satisfies  $$\mathrm{card}(\{n:\lambda_n(A_N)=\theta\})=\mathrm{card}(\{i:\theta_i=\theta\});$$

\item the eigenvalues of $A_{N}$ which are not equal to some $\theta_{i}$
converge uniformly to $\text{supp}(\mu)$ as $N\to\infty$, that is
\[
\lim_{N\to\infty}\max_{\lambda_{n}(A_{N})\notin\{\theta_{1},\dots,\theta_{p}\}}\text{dist}(\lambda_{n}(A_{N}),\text{supp}(\mu))=0.
\]
\end{itemize}

\item A positive integer $q$ and
fixed real numbers $$\tau_{1}\ge \tau_{2}\ge\cdots\ge\tau_{q}$$
which do not belong to $\text{supp}(\nu)$.
\item A sequence $(B_{N})_{N\in\mathbb N}$ of deterministic Hermitian matrices of size
$N\times N$ such that 

\begin{itemize}
\item $\mu_{B_{N}}$ converges weakly to $\nu$ as $N\to\infty$;

\item for $N\ge q$ and $\tau\in\{\tau_1,\dots,\tau_q\}$, the sequence $(\lambda_{n}(B_{N}))_{n=1}^{N}$ satisfies  $$\mathrm{card}(\{n:\lambda_n(B_N)=\tau\})=\mathrm{card}(\{i:\tau_i=\tau\});$$

\item the eigenvalues of $B_{N}$ which are not equal to some $\tau_{j}$
converge uniformly to $\text{supp}(\nu)$ as $N\to\infty$.
\end{itemize}

\item A sequence $(U_{N})_{N\in\mathbb N}$ of unitary random matrices such that the distribution
of $U_{N}$ is the normalized Haar measure on the unitary group ${\rm U}(N)$.
\end{itemize}

It is known from \cite{Voiculescu91} that the asymptotic eigenvalue distribution of $X_N$ is
$\mu\boxplus\nu$. In the following statement we take advantage of the fact, discussed later, that the functions $\omega_1$ and
$\omega_2$  extend continuously to the real line.  The points in $\mathbb R\setminus\mathrm{supp}(\mu\boxplus\nu)$ satisfying $\omega_1(\rho)=\theta$ are isolated but they may accumulate to $\mathrm{supp}(\mu\boxplus\nu)$.
We denote by $P_N$ and $Q_N$ the projection onto the space generated by the eigenvectors 
corresponding to the spikes of $A_N$ and $B_N$, respectively.  These can also be written as
\begin{equation}\label{PQ}
P_N=E_{A_N}(\{\theta_1,\dots,\theta_p\}),\quad Q_N=E_{B_N}(\{\tau_1,\dots,\tau_q\}), 
\end{equation}
in terms of the spectral measures of $A_N$ and $B_N$.
\begin{thm}\label{Main+}
With the above notation, set $K=\mathrm{supp}(\mu\boxplus\nu)$,
$$K'=K\cup \left[\bigcup_{i=1}^p
\omega_1^{-1}(\{\theta_i\})\right]\cup\left[\bigcup_{j=1}^q
\omega_2^{-1}(\{\tau_j\})\right],
$$
and let $\omega_1,\omega_2$ be the subordination functions satisfying {\rm (\ref{subord1})}.
The following results hold almost surely for large $N${\rm:}
\begin{enumerate}

\item Given $\varepsilon>0$, we have $\sigma(X_N)\subset K'_\varepsilon.$

\item Fix a number $\rho\in K'\setminus K$, let $\varepsilon>0$ be such that $(\rho-2\varepsilon,
\rho+2\varepsilon)\cap K'=\{\rho\}$, and set $k=\mathrm{card}(\{i:\omega_1(\rho)=\theta_i\})$, 
$\ell=\mathrm{card}(\{j:\omega_2(\rho)=\tau_j\})$. Then
$$
\mathrm{card}(\{\sigma(X_N)\cap(\rho-\varepsilon,\rho+\varepsilon)\})=k+\ell.
$$

\item With $\rho$ and $\varepsilon$ as in part $(2)$, we have
$$\left\|P_NE_{X_N}((\rho-\varepsilon,\rho+\varepsilon))P_N-\frac1{\omega'_1(\rho)}E_{A_N}(\{\omega_1(\rho)\})\right\|<\varepsilon$$
and
$$
\left\|Q_NU_NE_{X_N}((\rho-\varepsilon,\rho+\varepsilon))U_N^*Q_N-\frac1{\omega'_2(\rho)}E_{B_N}(\{\omega_2(\rho)\})\right\|<\varepsilon.
$$

\item With $\rho$ and $\varepsilon$ as in part $(2)$, suppose in addition that $\ell=0$. Then
$$
\left|\|E_{A_N}(\{\omega_1(\rho)\})\xi\|^2-\frac{\|\xi\|^2}{\omega'_1(\rho)}\right|<\varepsilon\|\xi\|^2,
\quad \xi\in E_{X_N}((\rho-\varepsilon,\rho+\varepsilon))\mathbb C^N.
$$
Analogously, if $k=0$, then
$$
\left|\|E_{U_N^*B_NU_N}(\{\omega_2(\rho)\})\xi\|^2-\frac{\|\xi\|^2}{\omega'_2(\rho)}\right|<\varepsilon
\|\xi\|^2,\quad \xi\in E_{X_N}((\rho-\varepsilon,\rho+\varepsilon))\mathbb C^N.
$$
\end{enumerate}
\end{thm}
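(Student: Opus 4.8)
The plan is to reduce everything to the analysis of a single resolvent-type matrix, namely $M_N(z)=\mathbb E\bigl[(z-X_N)^{-1}\bigr]^{-1}+A_N$, and to exploit the heuristic announced in the introduction: a suitable compression of $M_N(z)$ is uniformly close, on compact subsets of $\mathbb C^+$ and on a neighbourhood of $\rho$ in $\mathbb R\setminus K$, to $\omega_1(z)$ times the identity. Concretely, I would first establish (this is presumably done in Section 4) that for any $z$ in a fixed neighbourhood of $\rho$, one has the almost sure estimate
\[
\left\|P_N M_N(z) P_N-\omega_1(z)P_N\right\|\to 0,\qquad
\left\|Q_N U_N\widetilde M_N(z) U_N^*Q_N-\omega_2(z)Q_N\right\|\to 0,
\]
where $\widetilde M_N$ is the analogous object built from $\omega_2$. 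The key inputs here are the concentration of $\mathrm{tr}_N(f(X_N))$ and of bilinear forms $\langle \xi,(z-X_N)^{-1}\eta\rangle$ around their expectations (a standard consequence of the Haar measure on $\mathrm U(N)$ together with the results of \cite{ColMal11} guaranteeing that, away from the spikes, the bulk of the spectrum stays near $K$), and Biane's functional equations for $\omega_1,\omega_2$ from Subsections \ref{subsubsec:eq+}--\ref{subsubsec:eqT}.

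For part (1), I would combine the no-outliers-outside-$K$ result of \cite{ColMal11} applied to the non-spiked part with a finite-rank perturbation argument: writing $A_N=A_N^{(0)}+\sum\theta_i(\cdots)$ and similarly for $B_N$, the eigenvalues of $X_N$ in $\mathbb R\setminus K_\varepsilon$ are detected by the vanishing of $\det\bigl(\text{a finite matrix depending on }P_N,Q_N,M_N(z)\bigr)$. Using the compression estimate above, this finite determinant converges uniformly to a product of factors of the form $(\omega_1(z)-\theta_i)$ and $(\omega_2(z)-\tau_j)$ (times nonvanishing factors), whose real zeros are exactly the points of $K'\setminus K$; hence for large $N$ every outlier lies in $K'_\varepsilon$. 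For part (2), a Rouché-type argument on the same determinant, localized to the disc of radius $\varepsilon$ around $\rho$, counts the zeros with multiplicity: the limiting determinant vanishes at $\rho$ to order exactly $k+\ell$ (here one uses $\omega_1'(\rho)\neq 0$, $\omega_2'(\rho)\neq 0$, which follow from the strict monotonicity of the subordination functions on the relevant real interval, cf. Lemma \ref{lem:extension}), so the perturbed determinant has exactly $k+\ell$ zeros there; each zero is a genuine eigenvalue of $X_N$ because $\rho\notin K$ means $z-X_N$ is boundedly invertible off these zeros.

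For part (3) I would use the contour-integral representation of spectral projections: $E_{X_N}((\rho-\varepsilon,\rho+\varepsilon))=\frac1{2\pi i}\oint_{\partial D}(z-X_N)^{-1}\,dz$ over a small circle $D$ enclosing $\rho$. Compressing by $P_N$ on both sides and inserting the identity $(z-X_N)^{-1}=-M_N(z)^{-1}+(z-X_N)^{-1}\cdots$ — more precisely, using that on $\partial D$ the operator $P_N(z-X_N)^{-1}P_N$ is well-approximated by $P_N\bigl(M_N(z)-A_N\bigr)^{-1}P_N$, which by the compression estimate is close to $(\omega_1(z)I-A_N)^{-1}$ restricted to $P_N\mathbb C^N$ — the contour integral picks up the residue at $z=\rho$, and since $\omega_1(\rho)=\theta_i$ for the $\theta_i$'s counted by $k$, the residue is $\frac1{\omega_1'(\rho)}E_{A_N}(\{\omega_1(\rho)\})$. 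The $Q_N$ statement is identical after conjugating by $U_N$. For part (4), when $\ell=0$ the projection $E_{X_N}((\rho-\varepsilon,\rho+\varepsilon))$ has rank $k$ and, by part (3) with $\|P_N E_{X_N}(\cdots)P_N-\frac1{\omega_1'(\rho)}E_{A_N}(\{\omega_1(\rho)\})\|$ small, one shows that $E_{X_N}(\cdots)\mathbb C^N$ is almost contained in the range of $E_{A_N}(\{\omega_1(\rho)\})$ up to a factor $\frac1{\omega_1'(\rho)}$; testing against a unit vector $\xi$ in that range and using $\|E_{X_N}(\cdots)\xi\|=\|\xi\|$ gives the stated norm identity.

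**Main obstacle.** The hard part will be making the compression estimate $\|P_N M_N(z)P_N-\omega_1(z)P_N\|\to0$ rigorous and, crucially, \emph{uniform} in $z$ on the relevant neighbourhoods of the real line — because $\rho$ may accumulate to $K$ (as the statement explicitly warns), the invertibility of $z-X_N$ degrades as $\rho\to K$, and one must track the dependence of all error terms on $\mathrm{dist}(\rho,K)$. This forces a careful quantitative version of the concentration inequalities for the Haar unitary (controlling not just $\mathbb E(z-X_N)^{-1}$ but its fluctuations, uniformly), together with the fact (from Section 3) that the subordination functions, though possibly having many preimages of a spike, are locally conformal ($\omega_i'\neq0$) at each such preimage in $\mathbb R\setminus K$ — this local conformality is exactly what converts "the compression is $\approx\omega_1(z)I$" into the sharp count $k+\ell$ and the residue $1/\omega_i'(\rho)$, and its failure would be the only way the argument could break.
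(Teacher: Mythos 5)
Your overall architecture matches the paper's: reduce outlier detection to the zeros of a finite determinant, prove that the relevant compression of $\mathbb E[(z-X_N)^{-1}]^{-1}+A_N$ is close to $\omega_1(z)I$, and count zeros by a Hurwitz-type argument (the paper's Lemma \ref{alt-Benaych-Rao}). For part (3) your contour-integral route is genuinely different: the paper works instead with smooth cutoffs, the concentration of $\mathrm{Tr}_N[h(X_N)f_i(A_N)]$ (Lemma \ref{inegconc}), a Poisson-kernel representation of the expected trace (Lemma \ref{approxcauchy}), a quantitative Nevanlinna estimate of $\mathbb E[R_N(z)_{ii}]-(\omega_1(z)-\theta_i)^{-1}$ (Proposition \ref{estimfonda}), and the Haagerup--Thorbj\o{}rnsen bound (Lemma \ref{HT}). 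Your direct contour integral, done on a circle in $\overline{\mathbb C}\setminus K$, would bypass Lemma \ref{HT} and Proposition \ref{estimfonda} at the cost of needing the resolvent approximation to be uniform on the contour; in the paper this uniformity is supplied by Proposition \ref{uniformconvergence}, so both approaches ultimately rest on the same input.

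The genuine gap is in the step you flag as ``presumably done in Section 4.'' Concentration of bilinear forms of $(z-X_N)^{-1}$ plus Biane's functional equations cannot yield the estimate $\|P_NM_N(z)P_N-\omega_1(z)P_N\|\to0$: concentration shows $P_N(z-X_N)^{-1}P_N$ is close to its expectation, but says nothing about why that expectation should be close to a scalar. The missing idea is Lemma \ref{bicommutant}: bi-invariance of Haar measure forces $\mathbb E[(b-U_N^*B_NU_N)^{-1}]$ to lie in $\{b\}''$ with $b=zI_N-A_N$, and, quantitatively, the second-commutation identity \eqref{eq:second-commutation} together with Corollary \ref{cor:variance-estimate} and the numerical-range Lemma \ref{lem:W(T)} is precisely what bounds how far $\mathbb E[(zI_N-X_N)^{-1}]^{-1}-(zI_N-A_N)$ is from a scalar; identifying that scalar with $\omega_1(z)$ in the limit then needs Voiculescu's asymptotic freeness plus normal-family arguments (Propositions \ref{estimationinmean}--\ref{uniformconvergence}), not Biane's equations directly. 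Two further gaps: (a) part (4) does not follow from the norm estimate of part (3) when the rank $k$ exceeds one; the paper's Step B perturbs the spikes by distinct small amounts to split the outlier into $k$ simple ones, applies the rank-one case, and controls the error via the spectral-projection stability Lemma \ref{eigenspaces}---your sketch elides this; and (b) a single $(p+q)\times(p+q)$ determinant in $P_N,Q_N$ for part (2) would involve cross terms $P_N(z-X_N')^{-1}U_N^*Q_N^*$ whose almost-sure negligibility is not automatic, which is why the paper uses a two-step reduction (first $A$-spikes against $B_N'$, then $B$-spikes against $U_NA_NU_N^*$) followed by the same perturbation trick when $k\ne0\ne\ell$.
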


\begin{rem}
In case $k>0$ and $\ell>0$, the result of (3) above implies the following. 
Let $\{\xi_1,\dots,
\xi_{k+\ell}\}$ be an orthonormal basis of $E_{X_N}((\rho-\varepsilon,\rho+\varepsilon))$. Then, almost
surely for large $N$, we have
$$
\left|\sum_{n=1}^{k+\ell}\|E_{A_N}(\theta)\xi_n\|_2^2 -
\frac{\delta_{\omega_1(\rho),\theta}k}{\omega_1'(\rho)}\right|<\varepsilon
$$ 
and 
$$
\left|\sum_{n=1}^{k+\ell}\|E_{U_N^*B_NU_N}(\tau)\xi_{n}\|^2_2
-\frac{\delta_{\omega_2(\rho),\tau}\ell}{\omega_2'(\rho)}\right|<\varepsilon,
$$
for $\theta,\tau\in\mathbb R$, where $\delta_{\omega_2(\rho),\tau}$ is the usual Kronecker symbol.
Thus, assertion (4) is a strenghtening of (3) in the special case $k\ell=0$.
\end{rem}

\subsection{Multiplicative perturbations of nonnegative matrices\label{subsec:Xperturb+}}

Here are the ingredients for constructing the multiplicative model
$X_N=A_N^{1/2}U_N^*B_NU_NA_N^{1/2}$:
\begin{itemize}
\item Two compactly supported Borel probability measures $\mu$ and $\nu$ on $[0,+
\infty)$.
\item A positive integer $p$, and
fixed positive numbers 
$$
\theta_{1}\ge\theta_{2}\ge\cdots\ge\theta_{p}>0
$$
which do not belong to $\text{supp}(\mu)$. 
\item A sequence $(A_{N})_{N\in\mathbb N}$ of deterministic nonnegative matrices of size
$N\times N$ such that 
\begin{itemize}
\item $\mu_{A_{N}}$ converges weakly to $\mu$ as $N\to\infty$,
\item for $N\ge p$  and $\theta\in\{\theta_1,\dots,\theta_p\},$ the sequence  $\{\lambda_{n}(A_{N})\}_{n=1}^{N}$ 
satisfies $\mathrm{card}(\{n\colon\lambda_n(A_N)=\theta\})=\mathrm{card}(\{i\colon\theta_i=\theta\});$
\item the eigenvalues of $A_{N}$ which are not equal to some $\theta_{i}$
converge uniformly to $\text{supp}(\mu)$ as $N\to\infty$.
\end{itemize}

\item A positive integer $q$, and fixed positive numbers 
$$
\tau_{1}\ge\tau_{2}\ge\cdots\ge\tau_{q}>0
$$
which do not belong to $\text{supp}(\nu)$. 
\item A sequence $(B_{N})_{N\in\mathbb N}$ of deterministic nonnegative matrices of size
$N\times N$ such that 

\begin{itemize}
\item $\mu_{B_{N}}$ converges weakly to $\nu$ as $N\to\infty$,
\item for $N\ge q$  and $\tau\in\{\tau_1,\dots,\tau_q\},$ the sequence  $\{\lambda_{n}(B_{N})\}_{n=1}^{N}$ 
satisfies $\mathrm{card}(\{n\colon\lambda_n(B_N)=\tau\})=\mathrm{card}(\{i\colon\tau_i=\tau\});$
\item the eigenvalues of $B_{N}$ which are not equal to some $\tau_{j}$
converge uniformly to $\text{supp}(\nu)$ as $N\to\infty$.
\end{itemize}

\item A sequence $(U_{N})_{N\in\mathbb N}$ of random matrices such that the distribution
of $U_{N}$ is the normalized Haar measure on the unitary group ${\rm U}(N)$.
\end{itemize}
It is known from \cite{Voiculescu91} that the asymptotic eigenvalue distribution of $X_N$ is
$\mu\boxtimes\nu$.
The projections $P_N$ and $Q_N$ used in the following statement were defined in \eqref{PQ}.

\begin{thm}\label{MainX+}
With the above notations, let $\omega_1,\omega_2$ be the subordination functions satisfying 
\emph{(\ref{subord1'})}, set $K=\mathrm{supp}(\mu\boxtimes\nu)$, 
$v_j(z)=\omega_j\left({1}/{z}\right)$, $j=1,2$, and
$$
K'=K\cup\left[\bigcup_{i=1}^p
v_1^{-1}(\{1/\theta_i\})\right]\cup\left[\bigcup_{j=1}^q
v_2^{-1}(\{1/\tau_j\})\right].
$$ 
The following results hold almost surely for large $N${\rm:}
\begin{enumerate}
\item Given  $\varepsilon>0$ we have $\sigma(X_N)\subset K'_\varepsilon.$

\item Fix a positive number $\rho\in K'\setminus K$, let 
$\varepsilon>0$ be such that $(\rho-2\varepsilon,\rho+2\varepsilon)\cap K'=\{\rho\}$ and set 
$k=\mathrm{card}(\{i\colon v_1(\rho)=1/\theta_i\})$, $\ell=\mathrm{card}(\{j\colon v_2(\rho)=1/\tau_j\})$. Then
$$
\mathrm{card}(\{\sigma(X_N)\cap(\rho-\varepsilon,\rho+\varepsilon)\})=k+\ell.
$$

\item With $\rho$ and $\varepsilon$ as in part $(2)$, we have
$$\left\|P_NE_{X_N}((\rho-\varepsilon,\rho+\varepsilon))P_N-
\frac{\rho\omega_1(1/\rho)}{\omega'_1(1/\rho)}E_{A_N}(\{1/\omega_1(1/\rho)\})\right\|<\varepsilon$$
and
$$
\left\|Q_NU_NE_{X_N}((\rho-\varepsilon,\rho+\varepsilon))U_N^*Q_N-
\frac{\rho\omega_2(1/\rho)}{\omega_2'(1/\rho)}E_{B_N}(\{1/\omega_2(1/\rho)\})\right\|<\varepsilon.
$$

\item With $\rho$ and $\varepsilon$ as in part $(2)$, suppose in addition that $\ell=0$. Then
$$
\left|\|E_{A_N}(\{1/\omega_1(1/\rho)\})\xi\|^2-
\frac{\|\xi\|^2\rho\omega_1(1/\rho)}{\omega'_1(1/\rho)}\right|<\varepsilon\|\xi\|^2,
\quad \xi\in E_{X_N}((\rho-\varepsilon,\rho+\varepsilon))\mathbb C^N.
$$
Analogously, if $k=0$, then
$$
\left|\|E_{U_N^*B_NU_N}(\{1/\omega_2(1/\rho)\})\xi\|^2-
\frac{\|\xi\|^2\rho\omega_2(1/\rho)}{\omega'_2(1/\rho)}\right|<\varepsilon
\|\xi\|^2,\quad \xi\in E_{X_N}((\rho-\varepsilon,\rho+\varepsilon))\mathbb C^N.
$$
\end{enumerate}

\end{thm}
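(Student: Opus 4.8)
\textbf{Proof plan for Theorem \ref{MainX+}.}

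The plan is to reduce the multiplicative statement to the additive one of Theorem \ref{Main+} via the standard linearisation/subordination dictionary, rather than re-running the whole argument. First I would record the elementary identity relating the spectra of $X_N=A_N^{1/2}U_N^*B_NU_NA_N^{1/2}$ and of the Hermitian-free product in a $2N\times 2N$ block form, or, more directly, exploit that the nonzero eigenvalues of $X_N$ coincide with those of $B_N^{1/2}U_NA_NU_N^*B_N^{1/2}$ together with the observation that, on the level of resolvents, $\mathbb E[(z-X_N)^{-1}]$ is governed by the same mechanism that in the additive case produced the compression statement sketched in the heuristics. Concretely, I would establish the multiplicative analogue of that heuristic: a compression of $\psi$-type resolvent of $X_N$, conjugated appropriately by $A_N^{1/2}$, is close almost surely to $v_1(z)I_{V_N}$ on the relevant spike subspace $V_N=P_N\mathbb C^N$, where $v_1(z)=\omega_1(1/z)$ and $\omega_1$ is the multiplicative subordination function of \eqref{subord1'}. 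The key inputs here are the analytic properties of $\psi_\mu,\eta_\mu$ and the continuous boundary extension of $\omega_1,\omega_2$ (Lemmas \ref{lem:extensionX} and the companions referenced after \eqref{hyp:2.5}), together with the concentration estimates for the Haar unitary collected in Section 4.

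Next I would carry out the four assertions in order. For (1), the containment $\sigma(X_N)\subset K'_\varepsilon$ follows from the no-spike confinement result of \cite{ColMal11} applied to the truncated matrices (with spikes removed), plus a rank-$(p+q)$ perturbation argument showing that the only eigenvalues that can escape the $\varepsilon$-neighbourhood of $K=\mathrm{supp}(\mu\boxtimes\nu)$ are those forced by the zeros of $1/\theta_i-\psi_\mu(v_1(\cdot))$ and $1/\tau_j-\psi_\nu(v_2(\cdot))$, i.e. exactly the points of $v_1^{-1}(\{1/\theta_i\})$ and $v_2^{-1}(\{1/\tau_j\})$. For (2), fix $\rho\in K'\setminus K$ and an isolating $\varepsilon$; I would count eigenvalues of $X_N$ in $(\rho-\varepsilon,\rho+\varepsilon)$ by a contour-integral/argument-principle computation applied to the finite-dimensional determinant $\det(\,\cdot\,)$ obtained from the Schur complement with respect to $V_N\oplus W_N$ (the spike subspaces of $A_N$ and of $U_N^*B_NU_N$). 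Using the approximation from the first paragraph, this determinant converges, uniformly on the circle $|z-\rho|=\varepsilon$, to a scalar analytic function whose only zero inside is at $z=\rho$ with multiplicity $k+\ell$; hence $\mathrm{card}(\sigma(X_N)\cap(\rho-\varepsilon,\rho+\varepsilon))=k+\ell$ for large $N$. Assertions (3) and (4) are then extracted from the residues: evaluating $E_{X_N}((\rho-\varepsilon,\rho+\varepsilon))=\frac1{2\pi i}\oint (z-X_N)^{-1}\,dz$ against $P_N$, expanding the resolvent through the same Schur complement, and computing the residue at $z=\rho$; the derivative $\omega_1'(1/\rho)$ enters precisely as the first-order Taylor coefficient of the subordination function at the isolated preimage point, which is why the weight is $\rho\omega_1(1/\rho)/\omega_1'(1/\rho)$. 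The chain-rule passage between $\omega_j$ and $v_j=\omega_j(1/\cdot)$ accounts for the extra factor of $\rho$ relative to the additive formula in Theorem \ref{Main+}(3). Assertion (4), under $k\ell=0$, upgrades the operator-norm statement to a statement about individual vectors $\xi$ in the range of $E_{X_N}((\rho-\varepsilon,\rho+\varepsilon))$ because when one of the two spike subspaces does not contribute, the limiting compression is a scalar multiple of a single projection, so no cross-terms between distinct eigenvectors survive.

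The main obstacle I expect is the multiplicative compression lemma itself: controlling $A_N^{1/2}\,\mathbb E[(z-X_N)^{-1}]\,A_N^{1/2}$, or its $\psi$-transform analogue, on the spike subspace with enough uniformity in $z$ near the isolated real points $\rho$, especially since these points may accumulate to $K$. This requires the almost-sure estimates on $\mathrm{tr}_N$ of products of resolvents with the deterministic matrices and the Haar unitary — the content of Section 4 — together with care about the square-root factors $A_N^{1/2}$, which do not commute with the projection $P_N$ in general (though $P_N$ commutes with $A_N$, so in fact $A_N^{1/2}P_N=P_NA_N^{1/2}=\theta_i^{1/2}E_{A_N}(\{\theta_i\})$ on each spike eigenspace, which mitigates this). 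A secondary technical point is ensuring the isolating radius $\varepsilon$ can be chosen uniformly enough that the argument-principle count is valid simultaneously for all the (possibly infinitely many) spike-induced outliers; this is handled by working one $\rho$ at a time, as the statement permits, and invoking the local nature of each count. Once the compression lemma is in hand, the rest is the residue bookkeeping indicated above, parallel to the additive case.
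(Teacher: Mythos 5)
Your plan is, in substance, the strategy the paper follows in Section \ref{sec:5.2}: reduce to a fixed-size matrix-valued analytic function $F_N$ via Sylvester's identity, prove its almost sure uniform convergence on compacts of $\overline{\mathbb C}\setminus K$ to a diagonal limit governed by the multiplicative subordination function (the paper's Proposition \ref{estimationinmean'}, which rests on the compression Lemma \ref{approximatesubordination'} via Lemmas \ref{bicommutant} and \ref{lem:W(T)} with $b=(zC_N)^{-1}$), count outliers with a Hurwitz-type argument (Lemma \ref{alt-Benaych-Rao}), and read off the eigenvector weight $\rho\omega_1(1/\rho)/\omega_1'(1/\rho)$ as the residue of $z\mapsto (z(1-\theta_i\omega_1(1/z)))^{-1}$ at $\rho$. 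The one genuine structural deviation is your proposal to take a single Schur complement with respect to $V_N\oplus W_N$, with $W_N=U_N^*Q_N^*\mathbb C^q$ the random spike subspace of $U_N^*B_NU_N$; the paper instead proceeds in two sequential steps ($A_N$-spikes against the spike-free $B_N'$, then $B_N$-spikes against the already-controlled $A_N$) and covers $k\ne0\ne\ell$ by a small perturbation of the spikes together with Lemma \ref{eigenspaces}. Your joint variant is attractive because it would avoid the perturbation step, but it needs a piece of work you do not name: the $p\times q$ off-diagonal block $P_N(zI_N-X_N')^{-1}U_N^*Q_N^*$ of the compressed resolvent must be shown to vanish almost surely, uniformly on compacts of $\mathbb C\setminus K$ (this follows from the invariance of Haar measure under diagonal unitaries commuting with $B_N'$, which kills the mean, plus the concentration estimate of Lemma \ref{concentration}, but it is not covered by the ingredients you cite). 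Two smaller points: the opening framing of ``reducing to Theorem \ref{Main+}'' overstates the plan --- you do not invoke the additive theorem as a black box, you re-derive the parallel machinery with $\psi/\eta$ in place of $G/F$, exactly as the paper does; and the residue ``bookkeeping'' in (3)--(4) is not free --- it requires the quantitative Nevanlinna-type error estimate of Proposition \ref{estimfondaX} together with Lemmas \ref{HT} and \ref{approxcauchy} to justify the boundary limit $y\downarrow 0$ inside the trace integral, and that is precisely where the uniformity-as-$\rho$-approaches-$K$ issue you flag is actually discharged.
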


\subsection{Multiplicative perturbations of unitary matrices\label{subsec:XperturbT}}
Finally, we describe the ingredients for the construction of the multiplicative matrix model
$X_N=A_NU_N^*B_NU_N$ with unitary $A_N$ and $B_N$:
\begin{itemize}
\item Two Borel probability measures $\mu$ and $\nu$ on $\mathbb T$ with nonzero first moments
such that $\text{supp}(\mu\boxtimes\nu)\neq\mathbb T$.
\item A positive integer $p$ and fixed complex numbers 
$
\theta_{1},\cdots,\theta_{p}\in\mathbb T
$ which do not belong to $\text{supp}(\mu)$ and such that 
$$2\pi>\arg\theta_{1}\ge\cdots\ge\arg\theta_{p}\ge0.$$

\item A sequence $(A_{N})_{N\in\mathbb N}$ of deterministic unitary matrices of size
$N\times N$ such that 

\begin{itemize}
\item $\mu_{A_{N}}$ converges weakly to $\mu$ as $N\to\infty$,
\item for $N\ge p$  and $\theta\in\{\theta_1,\dots,\theta_p\},$ the sequence  $\{\lambda_{n}(A_{N})\}_{n=1}^{N}$ 
satisfies 
$$
\mathrm{card}(\{n\colon\lambda_n(A_N)=\theta\})=\mathrm{card}(\{i\colon\theta_i=\theta\});
$$

\item the eigenvalues of $A_{N}$ which are not equal to some $\theta_{i}$
converge uniformly to $\text{supp}(\mu)$ as $N\to\infty$.
\end{itemize}

\item A positive integer $q$ and
fixed complex numbers 
$
\tau_{1},\dots,\tau_{q}\in\mathbb T
$  which do not belong to $\text{supp}(\nu)$ and such
that $$2\pi>\arg\tau_{1}\ge\cdots\ge\arg\tau_{q}\ge0.$$

\item A sequence $(B_{N})_{N\in\mathbb N}$ of deterministic unitary matrices of size
$N\times N$ such that 

\begin{itemize}
\item $\mu_{B_{N}}$ converges weakly to $\nu$ as $N\to\infty$,
\item for $N\ge q$  and $\tau\in\{\tau_1,\dots,\tau_q\},$ the sequence  $\{\lambda_{n}(B_{N})\}_{n=1}^{N}$ 
satisfies 
$$
\mathrm{card}(\{n\colon\lambda_n(B_N)=\tau\})=\mathrm{card}(\{i\colon\tau_i=\tau\});
$$

\item the eigenvalues of $B_{N}$ which are not equal to some $\tau_{j}$
converge uniformly to $\text{supp}(\nu)$ as $N\to\infty$.
\end{itemize}

\item A sequence $(U_{N})_{N\in\mathbb N}$ of random matrices such that the distribution
of $U_{N}$ is the normalized Haar measure on the unitary  group ${\rm U}(N)$.
\end{itemize}

It is known from \cite{Voiculescu91} that the asymptotic eigenvalue distribution of $X_N$ is
$\mu\boxtimes\nu$. When $\rho\in\mathbb T$ and $\varepsilon>0$, the interval $(\rho-\varepsilon,\rho+\varepsilon)$ consists of those numbers in $\mathbb T$ whose argument differs from $\arg\rho$ by less than $\varepsilon$.  With this convention,
Theorem \ref{MainX+} holds \emph{verbatim} in the unitary case as well.


\section{Free convolutions\label{sec:Free-convolution}}

Free convolutions arise as natural analogues of classical convolutions
in the context of free probability theory. 
For two Borel probability measures $\mu$ and $\nu$ on the real line, one defines the free 
additive convolution $\mu\boxplus\nu$ as the distribution of $a+b$, where $a$ and $b$
are free self-adjoint random variables with distributions $\mu$ and
$\nu$, respectively. Similarly, if both $\mu,\nu$ are supported on $[0,+\infty)$ or on $
\mathbb T$, their free multiplicative convolution $\mu\boxtimes\nu$ is the distribution of
the product $ab$, where, as before, $a$ and $b$
are free, positive in the first case, unitary in the second, random variables with distributions 
$\mu$ and $\nu$, respectively. The product $ab$ of two
free positive random variables is usually not positive, but it has the same moments as the 
positive random variables $a^{1/2}ba^{1/2}$ and $b^{1/2}ab^{1/2}.$ We refer to \cite{VDN92} for an
introduction to free probability theory and to \cite{Voiculescu86,V2,BercoviciVoiculescu} for the
definitions and main properties of free convolutions. In this section,
we recall the analytic approach developed in \cite{Voiculescu86,V2}
to calculate the free convolutions of measures, as well as the analytic subordination
property \cite{voic-fish1,Biane98,Voiculescu00} and related results.

\subsection{Additive free convolution\label{sec:boxplus}}
Recall from \eqref{CauchyTr2} the definition of the Cauchy-Stieltjes of a finite positive Borel
measure $\mu$ on the real line:
$$
G_\mu(z)=\int_{\mathbb R}\frac{1}{z-t}\,d\mu(t),\quad z\in\mathbb C\setminus\text{supp}
(\mu).
$$
This function maps $\mathbb{C}^{+}$ to $\mathbb{C}^{-}$ and $\lim_{y\uparrow+\infty}iyG_{\mu}(iy)=\mu(\mathbb{R})$.
Conversely, any analytic function $G:\mathbb{C}^{+}\to\mathbb{C}^{-}$
for which $\lim_{y\uparrow+\infty}iyG(iy)$ is finite is of the form
$G=G_{\mu}|_{\mathbb{C}^{+}}$ for some finite positive Borel
measure $\mu$ on $\mathbb R$. When $\mu$ has compact support, the
function $G_{\mu}$ is also analytic at $\infty$ and $G_{\mu}(\infty)=0$
(see \cite[Chapter 3]{akhieser} for these results).
The measure $\mu$ can be recovered from its Cauchy-Stieltjes transform
as a weak limit
\begin{equation}\label{ConvPoisson}
d\mu(x)=\lim_{y\downarrow0}\frac{-1}{\pi}\Im G_{\mu}(x+iy)\, dx.
\end{equation}
(This holds for signed measures as well.) 
The density of (the absolutely continuous part of) $\mu$ relative
to Lebesgue measure is calculated as 
\[
\frac{d\mu}{dx}(x)=\lim_{y\downarrow0}\frac{-1}{\pi}\Im G_{\mu}(x+iy)
\]
for almost every $x$ relative to the Lebesgue measure. In particular, $\mathbb{R}\setminus{\rm supp}
(\mu)$ can be described as the set of those points $x\in\mathbb{R}$ with
the property that $G_{\mu}|_{\mathbb{C}^{+}}$ can be continued analytically
to an open interval $I\ni x$ such that $G_{\mu}|I$ is real-valued. On the
other hand, 
\[
\lim_{y\downarrow0}\frac{-1}{\pi}\Im G_{\mu}(x+iy)=+\infty
\]
almost everywhere relative to the singular part of $\mu$. 
Indeed, these facts follow from the straightforward observation that $(-\pi)^{-1}\Im G_{\mu}(x+iy)
,$ $y>0$, is the Poisson integral of $\mu$. See \cite{GarnettBook}
for these aspects of harmonic analysis.

It is often convenient to work with the reciprocal Cauchy-Stieltjes
transform $F_{\mu}(z)={1}/{G_{\mu}(z)}$, which defines an analytic
self-map of the upper half-plane. This function enjoys the following properties:
\begin{itemize}
\item[(a)] For any $z\in\mathbb C^+$, $\Im F_\mu(z)\ge\mu(\mathbb R)^{-1}\Im z$. If
equality holds at one point of $\mathbb C^+$, then it holds at all points, and $\mu=
\mu(\mathbb R)\delta_{-\mu(\mathbb R)\Re F_\mu(i)}$.
\item[(b)] In particular, the function 
\begin{equation}\label{h}
h_\mu(z)=F_\mu(z)-\mu(\mathbb R)^{-1}z,\quad z\in\mathbb C^+,
\end{equation}
is a self-map of $\mathbb C^+$ unless $\mu$ is a point mass, in which case $h_\mu$ is a real constant.
\item[(c)] If $\mu$ is compactly supported, there exist a real number $\alpha$ and a finite positive Borel 
measure $\rho$ on $\mathbb R$ with $\text{supp}(\rho)$ included in the convex hull of
$\text{supp}(\mu)$ such that
\begin{equation}\label{Nevanlinna}
F_\mu(z)=\alpha+\mu(\mathbb R)^{-1}z+\int_\mathbb R\frac{1}{t-z}\,d\rho(t),\quad
z\in\mathbb C\setminus\text{supp}(\rho).
\end{equation}
Conversely, if $F\colon\mathbb C^+\to\mathbb C^+$ extends to an analytic real-valued function to the 
complement in $\mathbb R$ of a compact set,  and if $\lim_{y\to+\infty}F(iy)=\infty$, then there exists a 
compactly supported positive Borel measure $\mu$ on $\mathbb R$ so that $F=F_\mu$. The value
$\mu(\mathbb R)$ is determined by $\mu(\mathbb R)=\lim_{y\to+\infty}iy/F(iy)$.
\item[(d)] If $\mu(\mathbb R)=1$ and $\rho$ is as in \eqref{Nevanlinna}, we have $\rho(\mathbb 
R)=\int_\mathbb R\left(t-\int_\mathbb R s\,d\mu(s)\right)^2\,d\mu(t)$ and 
$\alpha=-\int_\mathbb Rt\,d\mu(t)$. 
\end{itemize}
Equation \eqref{Nevanlinna} is a special case of the Nevanlinna representation of analytic self-maps
of the upper half-plane \cite[Chapter 3]{akhieser}:
\begin{equation}\label{N}
F(z)=a+bz+\int_\mathbb R\frac{1+tz}{t-z}\,d\Omega(t),\quad z\in\mathbb C^+,
\end{equation}
where $a\in\mathbb R, b\ge0$ and $\Omega$ is a positive finite Borel measure on $\mathbb R$.
We identify $a=\Re F(i)$, $b=\lim_{y\to+\infty}F(iy)/iy$, $\Omega(\mathbb R)=\Im F(i)-b.$ If
$(1+t^2)\in L^1(\mathbb R,d\Omega(t))$, then  
\eqref{N} reduces to \eqref{Nevanlinna}, with $b=\mu(\mathbb R)^{-1}$ and $d\rho(t)=(1+t^2)
\,d\Omega(t),\alpha=a-\int t\,d\Omega(t)$.

The Cauchy-Stieltjes transform of a compactly supported probability
measure $\mu$ is conformal in the neighborhood of $\infty$, and
its functional inverse $G_{\mu}^{-1}$ is meromorphic at zero with
principal part $1/z$. The $R$-transform \cite{Voiculescu86} of $\mu$ is the convergent
power series defined by 
\[
R_{\mu}(z)=G_{\mu}^{-1}(z)-\frac{1}{z}.
\]
 The free additive convolution of two compactly supported probability
measures $\mu$ and $\nu$ is another compactly supported probability
measure characterized by the identity 
\[
R_{\mu\boxplus\nu}=R_{\mu}+R_{\nu}
\]
satisfied by these convergent power series.

\subsection{Multiplicative free convolution on $[0,+\infty)$\label{sec:boxtimes+}}
Recall from \eqref{MomGen1} the definition of the moment-generating function of a 
Borel probability measure $\mu$ on $[0,+\infty)$:
$$
\psi_\mu(z)=\int_{[0,+\infty)}\frac{zt}{1-zt}\,d\mu(t), \quad z\in\mathbb C\setminus
\left\{z\in\mathbb C\colon\frac1z\in\text{supp}(\mu)\right\}.
$$
This function is related to the Cauchy-Stieltjes transform of $\mu$ via the relation
$$\psi_\mu(z)=\frac1zG_\mu\left(\frac1z\right)-1.$$ It satisfies the following properties, for 
which we refer to \cite[Section 6]{BercoviciVoiculescu}:
\begin{itemize}
\item $\psi_\mu(\mathbb C^+)\subseteq\mathbb C^+$.
\item $\psi_\mu((-\infty,0))\subseteq(\mu(\{0\})-1,0)$ and 
$$
\psi_\mu(i\mathbb C^+)
\subseteq\left\{z\in\mathbb C\colon\left|z-\frac{\mu(\{0\})-1}{2}\right|<\frac{1-\mu(\{0\})}{2}\right\}.
$$
In addition,
$$
\lim_{x\downarrow-\infty}\psi_\mu(x)=\mu(\{0\})-1,\quad\lim_{x\uparrow0}\psi_\mu(x)
=0,\quad \lim_{x\uparrow0}\psi_\mu'(x)=\int_{[0,+\infty)}t\,d\mu(t).
$$
\item In particular, if $\text{supp}(\mu)$ is compact and not equal to $\{0\}$, then $\psi_\mu$ is 
injective on some neighbourhood of zero in $\mathbb C$.
\item $\psi_\mu$ is injective on $i\mathbb C^+$.
\end{itemize}
It is often convenient to work with the eta transform, or Boolean cumulant function,
$$\eta_\mu(z)=\frac{\psi_\mu(z)}{1+\psi_\mu(z)}.$$ It inherits from $\psi$ the following
properties:
\begin{itemize}
\item[(a)] $\pi>\arg\eta_\mu(z)\ge\arg z$ for all $z\in\mathbb C^+$, where $\arg$ takes
values in $(0,\pi)$ on $\mathbb C^+$. Moreover, if equality holds for one point in 
$\mathbb C^+$, it holds for all points in $\mathbb C^+$, and $\mu=\delta_{\eta_\mu(z)/
z}=\delta_{\eta_\mu'(0)}$ for any $z\in\mathbb C^+$.
\item[(b)] $\lim_{x\uparrow0}\eta_\mu(x)=0$ and $\lim_{x\uparrow0}\eta_\mu'(x)=\int_{[0,
+\infty)}t\,d\mu(t).$ In particular, if $\text{supp}(\mu)$ is compact and different from $\{0\}$, 
then $\eta_\mu$ is injective on some neighbourhood of zero in $\mathbb C$.
\item[(c)] If $\mu\ne\delta_0$, $\eta_\mu$ is strictly increasing from $(-\infty,0]$ to $(\mu(\{0\})^{-1}(\mu(\{0\})-1),
0]$, where $\mu(\{0\})^{-1}(\mu(\{0\})-1)$ should be replaced by $-\infty$ if $\mu(\{0\})=0$. 
Moreover, $\eta_\mu$ is injective on $i\mathbb C^+$.
\item[(d)] Conversely, if an analytic function $\eta\colon\mathbb C^+\to\mathbb C^+$
satisfies $\pi>\arg\eta(z)\ge\arg z$ for all $z\in\mathbb C^+$ and 
$\lim_{x\uparrow0}\eta(x)=0$, then $\eta=\eta_\mu$ for some Borel probability measure on
$[0,+\infty)$ \cite[Proposition 2.2]{B-B-imrn}.
\end{itemize}
The 
$\Sigma$-transform \cite{V2,BercoviciVoiculescu} of a compactly supported measure  $\mu\ne\delta_0$ 
is the convergent power series defined by
$$
\Sigma_\mu(z)=\frac{\eta_\mu^{-1}(z)}{z},
$$
where $\eta_{\mu}^{-1}$ is the inverse of $\eta_\mu$ relative to composition.
The free multiplicative convolution of two compactly supported probability
measures $\mu\ne\delta_0\ne\nu$ is another compactly supported probability
measure characterized by the identity 
\[
\Sigma_{\mu\boxtimes\nu}(z)=\Sigma_{\mu}(z)\Sigma_{\nu}(z)
\]
in a neighbourhood of $0$.

\subsection{Multiplicative free convolution on $\mathbb T$\label{sec:boxtimesT}}
The analytic transforms involved in the study of multiplicative free convolution on 
$\mathbb T$ are formally the same ones as in Subsection \ref{sec:boxtimes+}, but their analytical 
properties are different. Thus,
$$
\psi_\mu(z)=\int_{\mathbb T}\frac{zt}{1-zt}\,d\mu(t), \quad z\in\mathbb C\setminus
\left\{z\in\mathbb C \colon\frac1z\in\text{supp}(\mu)\right\}.
$$
It satisfies $\Re\psi_\mu(z)>-\frac12$ for all $|z|<1$.
We work almost exclusively with the eta transform, or Boolean cumulant function,
$$\eta_\mu(z)=\frac{\psi_\mu(z)}{1+\psi_\mu(z)}.$$ The following properties of $\eta_\mu$ are 
relevant to our study:
\begin{itemize}
\item[(a)] For any $z\in\mathbb D$, we have $|\eta_\mu(z)|\leq|z|$.
If equality holds at one point in $\mathbb D\setminus\{0\}$, it holds 
at all points in $\mathbb D$, and $\mu=\delta_{\eta_\mu(z)/z}=\delta_{\eta_\mu'(0)}$ for 
any $z\in\mathbb D\setminus\{0\}$.
\item[(b)] $\eta_\mu(0)=0$ and $\eta_\mu'(0)=\int_{\mathbb T}t\,d\mu(t).$ In particular
$\eta_\mu$ is injective on a neighbourhood of zero in $\mathbb C$ if and only if
$\int_{\mathbb T}t\,d\mu(t)\neq0$.
\item[(c)] The function $\eta_\mu$ continues via Schwarz reflection through the set 
$\{z\in\mathbb T\colon \overline{z}\not\in\text{supp}(\mu)\}$, that is
$$
\eta_\mu(z)=\frac{1}{\overline{\eta_\mu\left(\frac1{\overline{z}}\right)}},\quad|z|>1.
$$
\item[(d)] For almost all points $1/x$ with respect to the absolutely continuous part of $\mu$ (relative 
to the Haar measure on $\mathbb T$), the nontangential limit of $\eta_\mu$ at $x$ belongs to 
$\mathbb D$, and for almost all points $1/x$ in the complement of the support of the absolutely continuous part of $\mu$, 
the nontangential limit of $\eta_\mu$ at $x$ belongs to $\mathbb T$. Moreover, if $\mu$ has
a singular component, then for almost all points $1/x$ with respect to this component, the
nontangential limit of $\eta_\mu$ at $x$ equals one.
\item[(e)] Conversely, if an analytic function $\eta\colon\mathbb D\to\mathbb D$
satisfies $\eta(0)=0$, then $\eta=\eta_\mu$ for a unique Borel probability measure on
$\mathbb T$ \cite[Proposition 3.2]{B-B-imrn}.

\end{itemize}
When $\int_{\mathbb T}t\,d\mu(t)\neq0$, we define the
$\Sigma$-transform \cite{V2,BercoviciVoiculescu} of $\mu$ as the convergent power series
$$
\Sigma_\mu(z)=\frac{\eta_\mu^{-1}(z)}{z}.
$$
Again, the free multiplicative convolution of two probability measures $\mu$ and $\nu$ with nonzero first 
moments is another probability measure with nonzero first moment characterized by the identity 
\[
\Sigma_{\mu\boxtimes\nu}(z)=\Sigma_{\mu}(z)\Sigma_{\nu}(z)
\]
in a neighbourhood of $0$.

If both of $\mu$ and $\nu$ have zero first moment, then
$\mu\boxtimes\nu$ is the Haar (uniform) distribution on $\mathbb T$, see \cite{VDN92}. From 
now on, we always assume that all our probability measures on $\mathbb T$ have 
nonzero first moments.

\subsection{Analytic subordination}

The analytic subordination phenomenon for free convolutions, as described in  
\eqref{subord1} and \eqref{subord1'}, was first noted by Voiculescu in \cite{voic-fish1} for free 
additive convolution of compactly supported probability measures. Later, Biane \cite{Biane98}
extended the result to free additive convolutions of arbitrary probability measures on $\mathbb R$,
and also found a subordination result for multiplicative free convolution. More importantly, he proved the 
stronger result (see heuristics in the introduction) that the conditional expectation of the resolvent of
a sum or product of free random variables onto the algebra generated by one of them is
in fact also a resolvent. In \cite{Voiculescu00}, Voiculescu deduced this property from
the fact that such a conditional expectation is a coalgebra morphism for certain
coalgebras, and through this observation he extended the subordination property to
free convolutions of operator-valued distributions. For our purposes, considerably less
than that is required: we essentially need only complex analytic properties of these 
functions. 
\subsubsection{The subordination functions equations for free additive 
convolution\label{subsubsec:eq+}} Given Borel probability measures $\mu$ and $\nu$ on 
$\mathbb R$, there exist two unique analytic functions $\omega_1,\omega_2\colon
\mathbb C^+\to\mathbb C^+$ such that 
\begin{enumerate}
\item $\lim_{y\to+\infty}\omega_j(iy)/iy=1$, $j=1,2$;
\item
\begin{equation}\label{EqSubord+}
\omega_1(z)+\omega_2(z)-z=F_\mu(\omega_1(z))=F_\nu(\omega_2(z))=F_{\mu\boxplus
\nu}(z),\quad z\in\mathbb C^+.
\end{equation}
\item In particular (see \cite{BelBer07}), for any $z\in\mathbb C^+\cup\mathbb R$ so that 
$\omega_1$ is analytic at $z$, $\omega_1(z)$ is the attracting fixed point of the self-map
of $\mathbb C^+$ defined by 
$$
 w\mapsto F_\nu(F_\mu(w)-w+z)-(F_\mu(w)-w).
$$
A similar statement, with $\mu,\nu$ interchanged, holds for $\omega_2$.
\end{enumerate}
We note that \eqref{EqSubord+} implies that the functions $\omega_1,\omega_2$ 
continue analytically accross an interval $(\alpha,\beta)\subseteq\mathbb R$ such that 
$\omega_1|_{(\alpha,\beta)}$ and $\omega_2|_{(\alpha,\beta)}$ are real valued if and only if 
the same is true for $F_{\mu\boxplus\nu}$. For the sake of providing an intrinsic characterization 
of the correspondence betweem spikes and outliers, we formalize and slightly strenghten this 
remark in the following lemma.  Here we use the functions $h_\mu,h_\nu$ defined by (\ref{h}).
\begin{lem}\label{lem:extension}
Consider two compactly supported Borel probability measures $\mu$ and $\nu$, neither of them 
a point mass. Then the subordination functions $\omega_1$ and $\omega_2$ have extensions
to $\overline{\mathbb{C}^{+}}\cup\{\infty\}$ with
the following properties{\rm:}
\begin{enumerate}
\item [(a)]$\omega_1,\omega_2:\overline{\mathbb{C}^{+}}\cup\{\infty\}\to
\overline{\mathbb{C}^{+}}\cup\{\infty\}$ are continuous.
\item [(b)] If $x\in\mathbb{R}\setminus\text{{\rm supp}}(\mu\boxplus\nu)$
then the functions $\omega_1$ and $\omega_2$ continue meromorphically to a neighborhood of $x$,
$\omega_1(x)=h_\nu(\omega_2 (x))+x\in(\mathbb{R}\cup\{\infty\})\setminus{\rm supp}(\mu)$, and $\omega_2(x)=h_\mu(\omega_1 (x))+x\in(\mathbb{R}\cup\{\infty\})\setminus{\rm 
supp}(\nu)$. If $\omega_1(x)=\infty$, then $\omega_2(x)=x-\int_\mathbb R t\,d\mu(t)\in
\mathbb R$, and if $\omega_2(x)=\infty$, then $\omega_1(x)=x-\int_\mathbb R t\,d\nu(t)\in
\mathbb R$.
\item[(c)] Conversely, suppose that $\omega_1$ continues meromorphically to a neighbourhood of 
a point $x\in\mathbb R$ and that $\omega_1(y)\in\mathbb R$ when $y\in(x-\delta,x+\delta)\setminus
\{x\}$ for some $\delta>0$. If $x\in\mathrm{supp}(\mu\boxplus\nu)$, then $x$ is an isolated atom for
$\mu\boxplus\nu$.
\end{enumerate}
\end{lem}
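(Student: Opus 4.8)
The plan is to establish first the continuous extensions asserted in (a) --- this is the crux --- and then to read off (b) and (c) from the functional equation \eqref{EqSubord+}, the remark preceding the lemma, and the Poisson-integral description of $\mathbb{R}\setminus\mathrm{supp}$. For (a) I would use the fixed-point characterisation recalled in Subsection \ref{subsubsec:eq+}: for $z\in\mathbb{C}^{+}$ the value $\omega_1(z)$ is the attracting fixed point of the holomorphic self-map of $\mathbb{C}^{+}$ given by $\Psi_z(w)=F_\nu(h_\mu(w)+z)-h_\mu(w)$, which satisfies $\Im\Psi_z(w)\ge\Im z$, so that for $z\in\mathbb{C}^{+}$ it is a strict hyperbolic contraction. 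Since neither $\mu$ nor $\nu$ is a point mass, $h_\mu,h_\nu$ are self-maps of $\mathbb{C}^{+}$, and property (a) of $F_\mu,F_\nu$ shows that $\Psi_{z_0}$ is still a self-map of $\mathbb{C}^{+}$ for every boundary point $z_0\in\mathbb{R}\cup\{\infty\}$, with $\Psi_z\to\Psi_{z_0}$ locally uniformly on $\mathbb{C}^{+}$ as $z\to z_0$. Unless $h_\mu$ and $h_\nu$ are both Möbius transformations --- equivalently, unless both $\mu$ and $\nu$ are sums of two point masses, a degenerate case in which $F_\mu,F_\nu$ are rational and $\omega_1,\omega_2$ are algebraic functions, hence continuous up to $\mathbb{R}\cup\{\infty\}$ by inspection --- the map $\Psi_{z_0}$ is not an automorphism of $\mathbb{C}^{+}$, so it has a well-defined Denjoy--Wolff point $w_0\in\overline{\mathbb{C}^{+}}\cup\{\infty\}$; transferring the attracting fixed point of $\Psi_z$ to $w_0$ as $z\to z_0$ provides the required continuous extension, and the value at $\infty$ is read off directly from $\omega_1(z)-z=h_\nu(\omega_2(z))\to-\int_{\mathbb{R}}t\,d\nu(t)$.

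For (b), let $x\in\mathbb{R}\setminus\mathrm{supp}(\mu\boxplus\nu)$. Then $G_{\mu\boxplus\nu}$ continues analytically and real-valued across a neighbourhood of $x$, so $F_{\mu\boxplus\nu}=1/G_{\mu\boxplus\nu}$ continues meromorphically and real-valued there, and by the remark preceding the lemma (that is, by \eqref{EqSubord+}) so do $\omega_1$ and $\omega_2$. A holomorphic self-map of $\mathbb{C}^{+}$ that is real-valued on a real interval has strictly positive derivative at each finite point of that interval --- by Schwarz reflection its Taylor coefficients there are real, and a vanishing first derivative would force the imaginary part to change sign near the point --- and a simple pole at any point where it equals $\infty$; thus near $x$ each $\omega_j$ is a local biholomorphism onto a neighbourhood of $\omega_j(x)$ on the Riemann sphere, matching upper and lower half-planes. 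Rewriting \eqref{EqSubord+} as $\omega_2(z)=z+h_\mu(\omega_1(z))$ and $\omega_1(z)=z+h_\nu(\omega_2(z))$ and letting $z\to x$ --- using $h_\mu(\infty)=-\int t\,d\mu$, $h_\nu(\infty)=-\int t\,d\nu$ (from the Nevanlinna representation in properties (c) and (d)) in the cases $\omega_1(x)=\infty$ or $\omega_2(x)=\infty$ --- yields all the claimed identities. Finally $\omega_1(x)\notin\mathrm{supp}(\mu)$: if $\omega_1(x)=\infty$ this is immediate because $\mathrm{supp}(\mu)$ is compact, and if $\omega_1(x)=a\in\mathbb{R}$, then inverting the local biholomorphism in $G_{\mu\boxplus\nu}(z)=G_\mu(\omega_1(z))$ gives $G_\mu=G_{\mu\boxplus\nu}\circ\omega_1^{-1}$ analytic and real-valued on a real interval around $a$, so $a\notin\mathrm{supp}(\mu)$; the assertions about $\omega_2$ are symmetric.

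For (c), assume $\omega_1$ continues meromorphically to a neighbourhood of $x$ with $\omega_1(y)\in\mathbb{R}$ for $y\in(x-\delta,x+\delta)\setminus\{x\}$; by continuity $\omega_1(x)\in\mathbb{R}\cup\{\infty\}$ as well, so $\omega_1$ is $(\mathbb{R}\cup\{\infty\})$-valued on the whole interval. The decisive point is that $\omega_2$ is then also $(\mathbb{R}\cup\{\infty\})$-valued on $(x-\delta,x+\delta)$: on $\mathbb{C}^{+}$ one has $\Im h_\nu(\omega_2(z))=\Im\omega_1(z)-\Im z$, which tends to $0$ as $z\to y$ (with $y$ not a pole of $\omega_1$), so $\Im(F_\nu(\omega_2(z))-\omega_2(z))\to0$; by part (a), $\omega_2(z)\to\omega_2(y)\in\overline{\mathbb{C}^{+}}\cup\{\infty\}$, and if $\omega_2(y)$ lay in $\mathbb{C}^{+}$ then property (a) of $F_\nu$ (strict, since $\nu$ is not a point mass) would give $\Im F_\nu(\omega_2(y))>\Im\omega_2(y)$, a contradiction; at the finitely many poles of $\omega_1$ one uses instead $\omega_2(z)=z+h_\mu(\omega_1(z))\to y-\int t\,d\mu\in\mathbb{R}$. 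Hence $\omega_2$ has continuous $(\mathbb{R}\cup\{\infty\})$-valued boundary values on the interval and so, by Schwarz reflection, continues meromorphically there. Now $\omega_1$ and $\omega_2$ are both meromorphic and real/$\infty$-valued near $x$, so $F_{\mu\boxplus\nu}=\omega_1+\omega_2-z$, and therefore $G_{\mu\boxplus\nu}=1/F_{\mu\boxplus\nu}$, are meromorphic and real/$\infty$-valued near $x$. The poles of $G_{\mu\boxplus\nu}$ there are exactly the atoms of $\mu\boxplus\nu$, they are isolated, and at every other point $G_{\mu\boxplus\nu}$ is analytic and real, hence that point lies outside $\mathrm{supp}(\mu\boxplus\nu)$. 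Thus $\mathrm{supp}(\mu\boxplus\nu)$ meets a small neighbourhood of $x$ in a finite set of atoms, and since $x\in\mathrm{supp}(\mu\boxplus\nu)$ it is one of them --- an isolated atom.

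The main obstacle is part (a), specifically transferring the attracting fixed point of $\Psi_z$ to the boundary when the limiting map $\Psi_{z_0}$ has its Denjoy--Wolff point on $\mathbb{R}\cup\{\infty\}$ rather than in $\mathbb{C}^{+}$: there a soft normal-families argument is not enough and one needs Julia's lemma and horocycle estimates, in addition to the separate (but routine) treatment of the degenerate case. Once the continuous extensions of $\omega_1$ and $\omega_2$ to $\overline{\mathbb{C}^{+}}\cup\{\infty\}$ are in hand, parts (b) and (c) are essentially bookkeeping with \eqref{EqSubord+} and the Poisson description of $\mathbb{R}\setminus\mathrm{supp}(\mu\boxplus\nu)$.
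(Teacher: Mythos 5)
Your treatment of parts (b) and (c) follows essentially the same route as the paper: derive real boundary values for $\omega_1,\omega_2$ from the subordination equation \eqref{EqSubord+}, invoke Schwarz reflection for meromorphic continuation, use the Nevanlinna representation of $h_\mu$ to handle $\omega_j(x)=\infty$, deduce $\omega_1(x)\notin\mathrm{supp}(\mu)$ from the real boundary values of $G_\mu\circ\omega_1$, and in (c) use the strict inequality in property (a) of $F_\nu$ to force $\omega_2$ real and then read off the atom structure from the meromorphic continuation of $G_{\mu\boxplus\nu}$. These parts are sound and match the paper closely (the paper's $\omega_1(x)\notin\mathrm{supp}(\mu)$ step uses Stieltjes inversion directly, yours inverts the local biholomorphism; the two arguments are interchangeable once you have $\omega_1'(x)\ne 0$, which you justify).

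For part (a) you diverge from the paper, which simply cites \cite[Theorem~3.3]{Belinschi08}, whereas you sketch a direct proof by tracking the Denjoy--Wolff point of the family $\Psi_z$. This is a legitimate route, but your sketch is not a proof as it stands, for two reasons. First, you yourself flag the main obstacle: when $\Psi_{z_0}$ has a boundary Denjoy--Wolff point, stability of the attracting fixed point under $z\to z_0$ requires quantitative input (Julia's lemma, horocycle estimates); this is precisely the content of the cited theorem, and leaving it at the level of ``one needs Julia's lemma'' does not discharge it. Second, and more substantively, the degenerate case where both $\mu$ and $\nu$ are two-point masses is not ``routine'' and is not covered ``by inspection'': in that case $h_\mu$ and $h_\nu$ are genuine automorphisms of $\mathbb{C}^+$ (each is $z\mapsto\alpha+c/(t_0-z)$ with $c>0$, from the Nevanlinna formula with $\rho$ a point mass), so $\Psi_{z_0}=z_0+h_\nu\circ(h_\mu+z_0)$ is an automorphism of $\mathbb{C}^+$ and may well be elliptic; the Denjoy--Wolff machinery fails there and saying $\omega_1,\omega_2$ are ``algebraic, hence continuous'' ignores the possibility of branch points on $\mathbb{R}$. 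You would need a separate argument (for instance, solving the quadratic fixed-point equation explicitly and checking the discriminant) to close this case. As written, part (a) is an incomplete plan rather than a proof; the rest is correct.
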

In the context of part (b) of the above lemma, we note that $h_\mu$ is analytic around infinity, and $h_\mu(\infty)=-\int_\mathbb R t\,d\mu(t)$. 
\begin{proof}
Part (a) was proved in \cite[Theorem 3.3]{Belinschi08}. Fix $x\in\mathbb{R}\setminus
{\rm supp}(\mu\boxplus\nu)$. Equation \eqref{EqSubord+} indicates that 
$\omega_1$ and $\omega_2$ must take real values on $(x-\delta,x+\delta)\setminus\{x\}$ for some $
\delta>0$. Schwarz reflection implies that $\omega_1$ and $\omega_2$ have meromorphic continuations
with real values on $\mathbb R$ accross the corresponding intervals.

The relation $G_{\mu\boxplus\nu}(z)=G_\mu(\omega_1(z))$ shows that the limit $\lim_{z\to y}
G_\mu(\omega_1(z))$ is real for $y\in(x-\delta,x+\delta)$ and therefore $\mu(\{\omega_1(y)
\colon y\in(x-\delta,x+\delta)\})=0$ by the Stieltjes inversion formula. In particular, $\omega_1(x)
\not\in\mathrm{supp}(\mu)$. To conlcude the proof of $(b)$, suppose that 
$\omega_1(x)=\infty$. It follows from 
\eqref{EqSubord+} in conjunction with items (c) and (d) of subsection \ref{sec:boxplus} that
$$
\omega_2(x)=\lim_{z\to x}F_\mu(\omega_1(z))-\omega_1(z)+z=x+\lim_{w\to\infty}F_\mu(w)-
w=x-\int_\mathbb R t\,d\mu(t),
$$
so that $\omega_2$ is analytic at $x$. The statement for $\omega_2(x)=\infty$ follows by symmetry.

Finally, suppose that the hypotheses of (c) is satisfied. It was observed in \cite{Belinschi08} that
$\omega_2(y)$ is also real for $y\in(x-\delta,x+\delta)$. (Indeed, if $\Im \omega_2(y)>0$,
relation  \eqref{EqSubord+} implies 
\begin{equation}\label{plunk}
\omega_1(y)+\omega_2(y)=y+F_{\mu\boxplus\nu}(y)=y+F_\nu(\omega_2(y)),
\end{equation}
and therefore $\Im F_\nu(\omega_2(y))=\Im\omega_2(y)$. This relation can only hold when $\nu$ is
a point mass, a case which we excluded.) Now, \eqref{plunk} implies that $F_{\mu\boxplus\nu}$ is
continuous and real-valued on $(x-\delta,x+\delta)\setminus\{x\}$, and this yields the desired
conclusion via the Stieltjes inversion formula.
\end{proof}

\subsubsection{The subordination functions equations for multiplicative free
convolution on $[0,+\infty)$\label{subsubsec:eqX}} Given Borel probability measures $\mu,
\nu$ on $[0,+\infty)$, there exist two unique analytic functions $\omega_1,\omega_2
\colon\mathbb C\setminus[0,+\infty)\to\mathbb C\setminus[0,+\infty)$ so that 
\begin{enumerate}
\item $\pi>\arg\omega_j(z)\ge\arg z$ for $z\in\mathbb C^+$ and $j=1,2$;
\item
\begin{equation}\label{EqSubordX+}
\frac{\omega_1(z)\omega_2(z)}{z}=\eta_\mu(\omega_1(z))=\eta_\nu(\omega_2(z))=\eta_{\mu\boxtimes
\nu}(z),\quad z\in\mathbb C\setminus[0,+\infty).
\end{equation}
\item In particular (see \cite{BelBer07}), for any $z\in\mathbb C^+\cup\mathbb R$ so that 
$\omega_1$ is analytic at $z$, the point $h_1(z):=\omega_1(z)/z$ is the attracting fixed point of 
the self-map of ${\mathbb C}\setminus[0,+\infty)$ defined by
$$
 w\mapsto\frac{w}{\eta_\mu(zw)} 
\eta_\nu\left(\frac{\eta_\mu(zw)}{w}\right).
$$
A similar statement, with $\mu,\nu$ interchanged, holds for $\omega_2$.
\end{enumerate}
A version of Lemma \ref{lem:extension} holds for multiplicative free convolution on $[0,+\infty)$. 
Since the proof is similar to the proof of Lemma \ref{lem:extension} and of Lemma \ref{lem:extensionT} 
below, we omit it. Item (a) appears in the proof of \cite[Theorem 3.2]{AIHP}.
\begin{lem}\label{lem:extensionX}
Consider two compactly supported Borel probability measures $\mu,\nu$ on $[0,+\infty)$,
neither of them a point mass. Then the restrictions of the subordination functions $\omega_1$ and 
$\omega_2$ to $\mathbb C^+$ have extensions to $\overline{\mathbb{C}^{+}}\cup\{\infty\}$ with the 
following properties:
\begin{enumerate}
\item [(a)] $\omega_1,\omega_2:\overline{\mathbb{C}^{+}}\cup\{\infty\}\to\overline{\mathbb{C}^{+}}\cup\{\infty\}$ are continuous.
\item [(b)] If $1/x\in\mathbb{R}\setminus\text{{\rm supp}}(\mu\boxtimes\nu)$
then the functions $\omega_1$ and $\omega_2$ continue analytically to a neighborhood of $x$,
$1/\omega_1(x)
=\omega_2(x)/x\eta_{\nu}(\omega_2(x))
\in\mathbb{R}\setminus{\rm supp}(\mu)$, 
and $1/\omega_2 (x)=\omega_1(x)/x\eta_{\mu}(\omega_1(x))\in\mathbb{R}\setminus\text{{\rm supp}}(\nu)$.
\end{enumerate}
\end{lem}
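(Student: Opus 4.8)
The plan is to mimic the proof of Lemma~\ref{lem:extension}, transferring each step to the multiplicative setting on $[0,+\infty)$ via the transforms $\eta_\mu,\eta_\nu$ and equation~\eqref{EqSubordX+}, just as the statement promises. Part (a) is cited directly from \cite[Theorem~3.2]{AIHP}, so nothing is to be done there except to record that the restrictions $\omega_1|_{\mathbb C^+},\omega_2|_{\mathbb C^+}$ extend continuously to $\overline{\mathbb C^+}\cup\{\infty\}$ with values in $\overline{\mathbb C^+}\cup\{\infty\}$. For part (b), fix $x$ with $1/x\in\mathbb R\setminus\mathrm{supp}(\mu\boxtimes\nu)$. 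The first step is to observe that the relation $\eta_{\mu\boxtimes\nu}$ continues analytically and real-valued across a neighbourhood of the point $1/x$ (equivalently, $\psi_{\mu\boxtimes\nu}$ does), so that by the middle equality in \eqref{EqSubordX+}, $\eta_\mu(\omega_1(z))=\eta_\nu(\omega_2(z))$ takes real values as $z$ ranges over a punctured real neighbourhood of $x$; then Schwarz reflection gives meromorphic (in fact, analytic, once we rule out poles---see below) continuations of $\omega_1$ and $\omega_2$ across that interval, real-valued on $\mathbb R$ there.

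The second step pins down where $\omega_1(x)$ and $\omega_2(x)$ lie. From $\eta_{\mu\boxtimes\nu}(z)=\eta_\mu(\omega_1(z))$ and the Stieltjes-type inversion for $\eta$ (property (d) of subsection~\ref{sec:boxtimes+}: the nontangential boundary value of $\eta_\mu$ at $1/y$ is in $(0,\infty)$ off the support of $\mu$ and has nonzero imaginary part precisely on the a.c.\ part, plus the singular-part statement), the fact that $\lim_{z\to y}\eta_\mu(\omega_1(z))$ is real for all $y$ in the interval forces $1/\omega_1(y)\notin\mathrm{supp}(\mu)$; in particular $1/\omega_1(x)\in\mathbb R\setminus\mathrm{supp}(\mu)$, and symmetrically $1/\omega_2(x)\in\mathbb R\setminus\mathrm{supp}(\nu)$. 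The explicit formulas then drop out of \eqref{EqSubordX+}: rearranging $\dfrac{\omega_1(z)\omega_2(z)}{z}=\eta_\nu(\omega_2(z))$ gives $\dfrac1{\omega_1(z)}=\dfrac{\omega_2(z)}{z\,\eta_\nu(\omega_2(z))}$, and by symmetry $\dfrac1{\omega_2(z)}=\dfrac{\omega_1(z)}{z\,\eta_\mu(\omega_1(z))}$; passing to the limit $z\to x$ along the already-established continuation yields the two displayed identities. One should also note here, as the paper does after Lemma~\ref{lem:extension}, the behaviour at the ``boundary'' values $0$ and $\infty$ of the $\omega_j$, using $\eta_\mu(0)=0$ and $\eta_\mu'(0)=\int t\,d\mu$, but since the statement of Lemma~\ref{lem:extensionX} only asserts \emph{analytic} continuation and membership in $\mathbb R\setminus\mathrm{supp}$, the main content is the two formulas above.

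The subtlety---and the step I expect to be the real obstacle---is ruling out that $\omega_1$ or $\omega_2$ takes the value $\infty$ (or $0$) at an interior point of the interval, i.e.\ upgrading ``meromorphic'' to ``analytic'' and confirming the codomain claim. In the additive case this is handled by the $F_\mu(w)-w\to-\int t\,d\mu$ asymptotics; here the analogue is that if $\omega_1(z)\to\infty$ as $z\to x$, then $zw\to\infty$ forces $\eta_\mu(\omega_1(z))\to1$ (property (a)/(c) of subsection~\ref{sec:boxtimes+}: $\eta_\mu$ is bounded, and its behaviour at infinity is governed by reflection only in the unitary case, so on $[0,+\infty)$ one uses instead that $\eta_{\mu\boxtimes\nu}$ is real and finite at $1/x\neq0$ to get a contradiction, or that $\eta_{\mu\boxtimes\nu}(1/x)\ne1$ unless $1/x$ is in a degenerate position). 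Concretely: $\eta_\mu(\omega_1(x))=\eta_{\mu\boxtimes\nu}(x)$ is a finite real number not equal to any value that would force $\omega_1(x)=\infty$, because $\eta_\mu$ maps $\mathbb R\setminus\mathrm{supp}(\mu)$ (minus the point $0$ and its reflection) onto a bounded set and only approaches its ``edge'' values $0$ or $1$ at the endpoints; one checks that $1/x$ lying strictly off $\mathrm{supp}(\mu\boxtimes\nu)$ keeps us away from these. The cleanest route is probably to argue exactly as in part (c) of Lemma~\ref{lem:extension}'s proof: if $\Im\omega_2(y)>0$ on part of the interval, then \eqref{EqSubordX+} forces $|\eta_\nu(\omega_2(y))|$ to match $|\omega_2(y)|\cdot|\omega_1(y)|/|y|$ in a way incompatible with property (a) of $\eta_\nu$ unless $\nu$ is a point mass, which is excluded by hypothesis; hence both $\omega_j$ stay real and finite, giving analyticity and the stated codomain. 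Since the paper explicitly says the proof is ``similar to the proof of Lemma~\ref{lem:extension} and of Lemma~\ref{lem:extensionT},'' I would at this point simply defer the routine verifications to those two proofs and record only the multiplicative-specific computations above.
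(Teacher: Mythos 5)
Since the paper omits the proof of this lemma, deferring to the analogous Lemmas \ref{lem:extension} and \ref{lem:extensionT}, there is no line-by-line proof to compare against. Your outline --- cite \cite{AIHP} for (a); for (b) use realness of $\eta_{\mu\boxtimes\nu}$ near $x$ together with the strict-argument property \ref{sec:boxtimes+}(a) to force the $\omega_j$ to be real, continue by Schwarz reflection, place $1/\omega_j(x)$ off the supports by a Stieltjes-type inversion, and rearrange \eqref{EqSubordX+} for the displayed formulas --- is the intended approach, and the plan is right. Several specifics are off, though, and would need repair. You say more than once that the continuation of $\eta_{\mu\boxtimes\nu}$ happens around $1/x$; it happens around $x$ (the singularities of $\eta_\rho$ lie at reciprocals of $\mathrm{supp}(\rho)$, so $1/x\notin\mathrm{supp}(\mu\boxtimes\nu)$ is exactly what makes $\eta_{\mu\boxtimes\nu}$ analytic at $x$). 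The boundary-value facts you paraphrase are property (d) of subsection \ref{sec:boxtimesT}, not of \ref{sec:boxtimes+} (the latter is merely the converse $\eta$-characterization and says nothing about a.c.\ or singular parts); on $[0,+\infty)$ the inversion step should pass to the ordinary Cauchy transform via $\psi_\mu(w)=\frac1wG_\mu(\frac1w)-1$, exactly paralleling the additive proof. Finally, the parenthetical in your pole discussion contains false claims: $\eta_\mu$ is not bounded on $\mathbb C^+$ (it has a pole at $\infty$ whenever $\mu(\{0\})=0$), and $\omega_1(z)\to\infty$ gives $\eta_\mu(\omega_1(z))\to1-1/\mu(\{0\})$ or $\infty$, not $1$. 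Your instinct that finiteness and nonvanishing of $\omega_j(x)$ is the genuinely delicate point is correct, but the argument you gesture at does not yet close it; it needs the correct asymptotics of $\eta_\mu$ at $0$ and at $\infty$ to be used consistently.
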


\subsubsection{The subordination functions equations for multiplicative free convolution on 
$\mathbb T$\label{subsubsec:eqT}} Given Borel probability measures $\mu,\nu$ on $\mathbb T$ with 
nonzero first moments, there exist unique analytic functions $\omega_1,\omega_2
\colon\mathbb D\to\mathbb D$  so that 
\begin{enumerate}
\item $|\omega_j(z)|\le|z|$, $j=1,2$, $z\in\mathbb D$;
\item
\begin{equation}\label{EqSubordXT}
\frac{\omega_1(z)\omega_2(z)}{z}=\eta_\mu(\omega_1(z))=\eta_\nu(\omega_2(z))
=\eta_{\mu\boxtimes\nu}(z),\quad z\in\mathbb D.
\end{equation}
\item In particular (see \cite{BelBer07}), if $z\in\mathbb D\cup\mathbb T$ and
$\omega_1$ is analytic at $z$, then the point $h_1(z):=\omega_1(z)/z$ is the attracting fixed point of 
the map
$$
\mathbb D\ni w\mapsto\frac{w}{\eta_\mu(zw)} 
\eta_\nu\left(\frac{\eta_\mu(zw)}{w}\right)\in\mathbb D.
$$
A similar statement, with $\mu,\nu$ interchanged, holds for $\omega_2$.
\end{enumerate}
\begin{lem}\label{lem:extensionT}
Consider two Borel probability measures $\mu,\nu$ on $\mathbb T$ with nonzero first moments, 
neither of them a point mass. Suppose that $\mathbb T\setminus\mathrm{supp}(\mu\boxtimes\nu)
\neq\varnothing$. Then the subordination functions $\omega_1$ and $\omega_2$ 
have extensions to $\mathbb{T}$ with the following properties:
\begin{enumerate}
\item [(a)] $\omega_1,\omega_2:\mathbb{D}\cup\mathbb{T}\to\mathbb{D}\cup\mathbb{T}$ are 
continuous.
\item [(b)] If $1/x\in\mathbb{T}\setminus\text{{\rm supp}}(\mu\boxtimes\nu)$
then the functions $\omega_1$ and $\omega_2$ continue analytically to a neighborhood of $x$,
$1/\omega_1(x)=\omega_2(x)/x\eta_{\nu}(\omega_2(x))\in\mathbb{T}\setminus{\rm supp}(\mu)$, 
and $1/\omega_2(x)=\omega_1(x)/x\eta_{\mu}(\omega_1(x))\in\mathbb{T}\setminus\text{{\rm supp}}(\nu)$.
\end{enumerate}
\end{lem}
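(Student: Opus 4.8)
The plan is to follow the template of the proof of Lemma~\ref{lem:extension}, substituting the $\eta$-transforms for the reciprocal Cauchy transforms, the functional equation \eqref{EqSubordXT} for \eqref{EqSubord+}, and the Schwarz-reflection property (c) of $\eta$-transforms on $\mathbb{T}$ recorded in Subsection~\ref{sec:boxtimesT} for the reflection across the real line. As in that lemma, part (a) is a boundary-regularity statement and is the crux, while part (b) will follow by an elementary reflection argument once (a) is available. The standing hypotheses enter as follows: neither $\mu$ nor $\nu$ being a point mass gives the strict Schwarz inequalities $|\eta_\mu(w)|<|w|$ and $|\eta_\nu(w)|<|w|$ for $w\neq0$ (property (a) of Subsection~\ref{sec:boxtimesT}), together with $|\eta_\mu'(0)|,|\eta_\nu'(0)|<1$, while the assumption $\mathbb{T}\setminus\mathrm{supp}(\mu\boxtimes\nu)\neq\varnothing$ is precisely what makes the set considered in part (b) nonempty.

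For part (a) I would first note that the self-map $T_z(w)=\frac{w}{\eta_\mu(zw)}\,\eta_\nu\!\big(\tfrac{\eta_\mu(zw)}{w}\big)$ appearing in item (3) of Subsection~\ref{subsubsec:eqT} makes sense as a holomorphic map $\mathbb{D}\to\mathbb{D}$ not only for $z\in\mathbb{D}$ but for every $z\in\overline{\mathbb{D}}$ --- a short computation with the strict Schwarz inequalities shows $|T_z(w)|<1$ --- and that $(z,w)\mapsto T_z(w)$ is jointly continuous on $\overline{\mathbb{D}}\times\mathbb{D}$. Since, by item (3), $h_1(z):=\omega_1(z)/z$ is the attracting fixed point of $T_z$ for $z\in\mathbb{D}$, it remains to show that this fixed point has a continuous extension to $\mathbb{T}$; I would derive this from a stability argument for attracting fixed points under the locally uniform convergence $T_{r\zeta}\to T_\zeta$ as $r\uparrow1$ (exercising the usual care when the limiting self-map $T_\zeta$ has its Denjoy--Wolff point on $\mathbb{T}$, where one uses that $\eta_\mu$ and $\eta_\nu$ strictly contract moduli on compacta), or else by invoking the analogue for $\boxtimes$ on $\mathbb{T}$ of the boundary-continuity result of~\cite{Belinschi08} --- equivalently, that $G_{\mu\boxtimes\nu}$, hence $\eta_{\mu\boxtimes\nu}$, extends continuously to $\overline{\mathbb{D}}$ --- and then recovering the continuity of $\omega_1,\omega_2$ by inverting $\eta_\mu$ and $\eta_\nu$ locally. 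This promotion of analyticity on the gap to continuity on the entire circle, including at points of $\mathrm{supp}(\mu\boxtimes\nu)$, is the step I expect to be the main obstacle; it is the only place where genuine regularity of the free convolution, rather than soft complex analysis, is needed.

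Granting (a), I would prove (b) as follows. Fix $x\in\mathbb{T}$ with $1/x\notin\mathrm{supp}(\mu\boxtimes\nu)$. Then $\psi_{\mu\boxtimes\nu}(z)=z^{-1}G_{\mu\boxtimes\nu}(1/z)-1$ and $\eta_{\mu\boxtimes\nu}$ are analytic in a full neighbourhood of $x$, and the reflection formula of property (c) of Subsection~\ref{sec:boxtimesT} forces $\Re\psi_{\mu\boxtimes\nu}(x)=-\tfrac12$, whence $\eta_{\mu\boxtimes\nu}(x)\in\mathbb{T}\setminus\{1\}$; the same holds on a sub-arc of $\mathbb{T}$ about $x$. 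If $|\omega_1(x)|<1$, then $\eta_\mu$ is continuous at the interior point $\omega_1(x)$ and passing to the limit in $\eta_\mu(\omega_1(z))=\eta_{\mu\boxtimes\nu}(z)$ would place a point of $\mathbb{T}$ in $\mathbb{D}$; hence $|\omega_1(x)|=1$, and likewise $|\omega_1|=|\omega_2|=1$ on a sub-arc about $x$. The Schwarz reflection principle across that arc then extends $\omega_1$ and $\omega_2$ analytically to a neighbourhood of $x$ (no poles appear, as $\omega_1,\omega_2$ are unimodular, hence nonvanishing, on the arc), and the Hopf boundary lemma applied to the harmonic function $\log|\omega_1|$, which is negative in $\mathbb{D}$ and vanishes on the arc, yields $\omega_1'(x)\neq0$, so $\omega_1$ maps a neighbourhood of $x$ biholomorphically onto a neighbourhood of $\omega_1(x)$ respecting the decomposition of such a neighbourhood into its intersections with $\mathbb{D}$, $\mathbb{T}$, and $\mathbb{C}\setminus\overline{\mathbb{D}}$; similarly for $\omega_2$. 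The identity $1/\omega_1(x)=\omega_2(x)/\big(x\,\eta_\nu(\omega_2(x))\big)$ is now read off \eqref{EqSubordXT}, all three factors on the right being unimodular, and symmetrically for $\omega_2$. Finally, composing $\psi_\mu\circ\omega_1=\psi_{\mu\boxtimes\nu}$ with $\omega_1^{-1}$ exhibits $\psi_\mu$ --- equivalently $G_\mu$ near $1/\omega_1(x)=\overline{\omega_1(x)}$ --- as analytic across $\omega_1(x)$, which forces $1/\omega_1(x)\notin\mathrm{supp}(\mu)$; the symmetric statement gives $1/\omega_2(x)\notin\mathrm{supp}(\nu)$, completing the proof. (The same scheme yields Lemma~\ref{lem:extensionX} for $\boxtimes$ on $[0,+\infty)$ with the obvious modifications.)
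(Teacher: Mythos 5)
Your argument for part (b) is correct, but it is organized quite differently from the paper's. The paper derives (b) without invoking (a): starting from the fact that the nontangential boundary values of $\omega_1$ are unimodular a.e.\ on an arc around $x$, it applies the cluster-set dichotomy of \cite[Proposition 1.9(a)]{AIHP} --- if $\omega_1$ did not reflect analytically through $x$ then its nontangential cluster set near $x$ would be dense in $\mathbb T$, so one could find a cluster value $d_0$ in the domain of analyticity of $\eta_\mu$, invert $\eta_\mu$ locally there, and write $\omega_1=\eta_\mu^{-1}\circ\eta_{\mu\boxtimes\nu}$ near $x$, a contradiction. Your route instead uses the global continuity statement (a) as the engine: you pass to the limit in $\eta_\mu\circ\omega_1=\eta_{\mu\boxtimes\nu}$ to force $|\omega_1|=|\omega_2|=1$ on a sub-arc, apply Schwarz reflection to get the analytic continuation, and then use Hopf/Julia--Carath\'eodory to get $\omega_1'(x)\neq0$ and hence a local biholomorphism, from which the identity $1/\omega_1(x)=\omega_2(x)/\bigl(x\,\eta_\nu(\omega_2(x))\bigr)$ and the membership $1/\omega_1(x)\notin\mathrm{supp}(\mu)$ follow. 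Both routes are legitimate; yours is more elementary once (a) is granted (classical reflection instead of the cluster-set dichotomy), but it makes (b) logically dependent on (a), which the paper's proof is not. For (a) itself you only sketch two possible strategies (a fixed-point stability argument in the spirit of \cite{BelBer07}, or a boundary-regularity result for $\eta_{\mu\boxtimes\nu}$), whereas the paper simply cites \cite[Theorem 3.6]{AIHP}; since (a) is the crux of your whole proof of (b), you should pin that reference down rather than leave it as an alternative. Finally, a minor point worth making explicit: your concluding step that analyticity of $\psi_\mu$ across $\omega_1(x)$ with $\Re\psi_\mu=-1/2$ on the arc forces $1/\omega_1(x)\notin\mathrm{supp}(\mu)$ is a Herglotz/Poisson-kernel inversion (the circle analogue of the Stieltjes inversion used in the paper's proof of Lemma \ref{lem:extension}), and should be stated as such rather than asserted.
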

\begin{proof}
Part (a) can be found in \cite[Theorem 3.6]{AIHP}. Fix $1/x\in\mathbb{T}\setminus
{\rm supp}(\mu\boxtimes\nu)$. Equation \eqref{EqSubordXT} coupled with items (d) and (e)
of Subsection \ref{sec:boxtimesT} indicate clearly that 
$\omega_1,\omega_2$ must take values in $\mathbb T$ at least a.e. on a neighbourhood of $x$. 
As proved in \cite[Proposition 1.9 (a)]{AIHP} if, say, $\omega_1$ does not 
reflect analytically through a neighbourhood of $x$, then for any $\epsilon>0$ the 
set of nontangential limits $\{\sphericalangle\lim_{z\to c}\omega_1(z)\colon \arg(xe^{-i\epsilon})
<\arg(c)<\arg(xe^{i\epsilon})\}$ of $\omega_1$ around $x$ is dense in $\mathbb T$. As 
$\mathbb T\setminus\text{supp}(\mu)$ is nonempty, many of these limits will fall in the domain 
of analyticity of $\eta_\mu$. In particular, we may choose an arbitrary interval $I=\{e^{it}\colon
t\in[s_1,s_2]\}$ strictly included in the domain of analyticity of $\eta_\mu$ and we will be able to 
find points $1/c_n\in\mathbb{T}\setminus{\rm supp}(\mu\boxtimes\nu)$ tending to $1/x$ so 
that $\sphericalangle\lim_{z\to c_n}\omega_1(z)=d_n\in I.$ Obviously, in that case any limit
point of $\{d_n\}_{n\in\mathbb N}$ will still belong to $I$, and hence be in the domain of
analyticity of $\eta_\mu$. Pick such a limit point $d_0$. Note that, as a trivial consequence of the 
Julia-Carath\'eodory Theorem \cite[Chapter I, Exercises 6 and 7]{GarnettBook}, $\eta'_\mu(w)
>0$ for any $w\in\mathbb T$ in the domain of analyticity of $\eta_\mu$, and thus $\eta_\mu'
(d_0)>0$, which implies that $\eta_\mu$ is conformal on a neighbourhood $U$ of $d_0$ (in
$\mathbb C)$. Now recall that
$$
\eta_{\mu\boxtimes\nu}(c_n)=\sphericalangle\lim_{z\to c_n}\eta_{\mu\boxtimes\nu}(z)=
\sphericalangle\lim_{z\to c_n}\eta_\mu(\omega_1(z))=\eta_\mu(d_n).
$$
Letting $n$ go to infinity (along a subsequence, if necessary), and recalling that $1/x\not\in{\rm 
supp}(\mu\boxtimes\nu),$ we obtain $\eta_{\mu\boxtimes\nu}(x)=\eta_\mu(d_0)$. Both 
functions are analytic around the two respective points from $\mathbb T$, so the conformality
of $\eta_\mu$ on $U$ allows us to find $\eta_\mu(U)$ as a neighbourhood of 
$\eta_{\mu\boxtimes\nu}(x)$ on which the compositional inverse $\eta_\mu^{-1}$ can be 
defined. We write $\eta_\mu^{-1}\circ\eta_{\mu\boxtimes\nu}$ on some convex neighbourhood 
$W$ of $x$ which is small enough so that $\eta_{\mu\boxtimes\nu}(W)\subset\eta_\mu(U)$ (the
existence of $W$ is guaranteed by the continuity of $\eta_{\mu\boxtimes\nu}$ around $x$). Pick 
points $z_n\in\mathbb D$ so that $|z_n-c_n|<\frac1n$. Clearly $\lim_{n\to\infty}z_n=x$, so 
that for all $n\in\mathbb N$ large enough, $z_n\in W$. Pick a piecewise linear path going 
consecutively through the $z_n$'s, so that, by the convexity of $W$, this path stays in $W$
and necessarily converges to $x$. For any $z$ in this path, we have
$$
\eta_{\mu\boxtimes\nu}(z)=\eta_\mu(\omega_1(z))\implies
\omega_1(z)=(\eta_\mu^{-1}\circ\eta_{\mu\boxtimes\nu})(z),
$$
which, by analytic continuation and analyticity of $\eta_\mu^{-1}\circ\eta_{\mu\boxtimes\nu}$
on $W$, forces $\omega_1$ to be analytic on $W$, providing a contradiction. Thus,
$\omega_1$ extends analytically through $x$. The same argument shows that $\omega_2$
extends analytically around $x$. The last statement of (b) above follows again from
the simple remark that $\omega_2(x)=x\eta_{\mu}(\omega_1(x))/\omega_1(x).$
\end{proof}
Unlike the case of free additive convolution, the functions $\omega_1$ and $\omega_2$ are bounded
on $\mathbb D$ and hence do not have pole singularities on $\mathbb T$.

\section{Preliminary results}

The proofs of our main results will be based largely on both scalar- and matrix-valued 
analytic function methods, as well as on some elementary results from operator theory. We start
by collecting  some results which apply to both additive and multiplicative
models. We use the notation introduced in Section \ref{sec:Notation-and-Statements}.

\subsection{Boundary behaviour and convergence of some families of analytic functions}

The following convergence result for sequences of Nevanlinna-type
functions is necessary in the analysis of eigenvectors corresponding to outliers. 
$C({\bf X})$ denotes the space of complex-valued continuous functions on a topological 
space ${\bf X}$. We use the notation $|\rho|$ for the total variation measure of a signed measure 
$\rho$ on $\mathbb R$. That is, $|\rho|=\rho^++\rho^-$, where $\rho=\rho^+-\rho^-$ is the
Hahn decomposition of $\rho$. The total variation of $\rho$ is $\|\rho\|=|\rho|(\mathbb R)$.

\begin{lem}\label{conv}
Let  $\{\rho_N\}_{N\in\mathbb N}$ be a family of signed measures on $\mathbb R$
satisfying the following properties{\rm:}
\begin{itemize}
\item There exists  $m\in\mathbb R$ such that $\mathrm{supp}(\rho_N)\subseteq[-m,m]$
for all $N\in\mathbb N$;
\item $\sup_{N\in\mathbb N}{\|\rho_N\|}<\infty$;
\item $\rho_N\to0$ in the weak${}^*$-topology as $N\to\infty$, that is, 
$$\lim_{N\to\infty}\int_\mathbb Rf(t)\,d\rho_N(t)=0\quad f\in C([-m,m]).$$
\end{itemize}
Then there exists a sequence $\{v_N\}_{N\in\mathbb N}\subset(0,+\infty)$ converging to zero,
independent of $z$,
such that 
$$
\left|\int_{\mathbb R}\frac{1}{z-t}\,d\rho_N(t)\right|<
\frac{|z|^2+\sup_{N\in\mathbb N}{\|\rho_N\|}}{(\Im z)^2}v_N,\quad z\in\mathbb C^+,N\in
\mathbb N.
$$
\end{lem}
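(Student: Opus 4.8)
The idea is to reduce the bound on the Cauchy transform of $\rho_N$ to a uniform estimate on a \emph{compact} family of test functions, and then quantify the weak${}^*$-convergence hypothesis. The key observation is that, for $z\in\mathbb C^+$ and $t\in[-m,m]$, the function $t\mapsto 1/(z-t)$ is not itself in a compact subset of $C([-m,m])$ as $z$ varies, but after dividing by an appropriate $z$-dependent factor it becomes so. Concretely, write
\begin{equation*}
\frac{1}{z-t}=\frac{1}{z}\cdot\frac{1}{1-t/z}=\frac{1}{z}\sum_{k\ge 0}\frac{t^k}{z^k}
\end{equation*}
when $|z|>m$; this suggests that the natural ``compact'' family is the set of monomials $\{t^k\}_{k\ge0}$ on $[-m,m]$, normalised by $m^k$. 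So the first step is: for each $k$, weak${}^*$-convergence applied to $f(t)=t^k$ gives $c_N^{(k)}:=\int_{\mathbb R}t^k\,d\rho_N(t)\to 0$. Since $\sup_N\|\rho_N\|<\infty$ we have $|c_N^{(k)}|\le m^k\sup_N\|\rho_N\|$ for all $k,N$.

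\textbf{Extracting the uniform rate.} The delicate point, and the one I expect to be the main obstacle, is producing a \emph{single} sequence $v_N\to0$ that works simultaneously for \emph{all} $z\in\mathbb C^+$, i.e.\ a rate of weak${}^*$-convergence uniform over a suitable dense/compact set of test functions. I would argue as follows. Since $[-m,m]$ is compact, the unit ball of $C([-m,m])$ is separable, and polynomials (even monomials) are dense; moreover the subset $\mathcal K=\{t\mapsto t^k/m^k : k\ge0\}\cup\{0\}$ is itself compact in $C([-m,m])$ once we note that $t^k/m^k\to 0$ is false pointwise — so instead I use the genuinely compact set $\mathcal K'=\overline{\{ \sum_{k\ge0} a_k t^k : \sum_k |a_k| m^k\le 1\}}$, the image of the closed $\ell^1$-ball under the (continuous, hence compact-preserving on that ball with its weak topology — better: under the norm-continuous) map into $C([-m,m])$. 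On a compact metric space $\mathcal K'\subset C([-m,m])$, pointwise (on a dense set, here monomials) convergence of the uniformly bounded linear functionals $\varphi_N(f)=\int f\,d\rho_N$ to $0$, together with equicontinuity coming from $\sup_N\|\rho_N\|<\infty$, upgrades to uniform convergence: $v_N:=\sup_{f\in\mathcal K'}|\varphi_N(f)|\to 0$. This $v_N$ depends only on $m$, the bound on $\|\rho_N\|$, and the measures themselves — crucially not on $z$.

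\textbf{Assembling the estimate.} With $v_N$ in hand, fix $z\in\mathbb C^+$. If $|z|\ge 2m$, the series $\sum_k m^k/|z|^k$ converges and $(1/z)(t/z)^k$-type reasoning shows $t\mapsto 1/(z-t)$ lies in a scalar multiple of $\mathcal K'$ with the scalar controlled by $1/(|z|-m)\le 2/|z|$; this already gives $|\int (z-t)^{-1}d\rho_N|\le (2/|z|)\,\|\rho_N\|\, \text{(something)}\,v_N$, comfortably within the claimed bound since $1/|z|\le |z|/(\Im z)^2$ is false in general but $1/|z|^2$-type decay is not what we need — here I should be careful and instead just use that the stated right-hand side $(|z|^2+\sup\|\rho_N\|)v_N/(\Im z)^2$ is large when $|z|$ is large, so the far regime is easy. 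The real work is $|z|<2m$. There I would \emph{not} expand in a series; instead, pick a fixed finite truncation: write $1/(z-t)=\sum_{k=0}^{K-1} t^k/z^{k+1} + t^K/(z^K(z-t))$ for any $K$, but this only converges for $|z|>m$. For $|z|\le 2m$ the clean route is different: approximate $t\mapsto 1/(z-t)$ on $[-m,m]$ in sup-norm by a polynomial $p_{z,\delta}$ of controlled degree and coefficients, with error $\le\delta$ chosen so that $\delta\cdot\sup_N\|\rho_N\|$ is absorbed; but the degree and coefficient size blow up as $\Im z\downarrow 0$, reintroducing $z$-dependence. The honest fix, which I believe is the intended one, is to exploit that $t\mapsto 1/(z-t)$ has sup-norm on $[-m,m]$ at most $1/\Im z$ and, more to the point, that the \emph{signed measure} $\rho_N$ has a Hahn decomposition $\rho_N=\rho_N^+-\rho_N^-$ with both parts supported in $[-m,m]$ and total mass $\le\sup_N\|\rho_N\|$; I would then consider the probability-like measures $\rho_N^{\pm}/\|\rho_N^{\pm}\|$, note their Cauchy transforms are Nevanlinna functions mapping $\mathbb C^+$ to $\mathbb C^-$, and use a normal-families / Helly-type compactness argument: the functions $z\mapsto\int(z-t)^{-1}d\rho_N(t)$ are uniformly bounded by $\|\rho_N\|/\Im z$ on each $\{\Im z\ge\epsilon\}$, they tend to $0$ pointwise (by the monomial argument applied to partial-fraction expansions valid for $|z|>m$ and then analytic continuation, since each is analytic off $[-m,m]$), hence by Vitali/Montel they tend to $0$ uniformly on compact subsets of $\mathbb C\setminus[-m,m]$; converting ``uniform on compacta'' into the explicit bound $(|z|^2+\sup\|\rho_N\|)v_N/(\Im z)^2$ is then a matter of packaging, using the a priori bound $\|\rho_N\|/\Im z$ near the real axis and the pointwise decay elsewhere, with $v_N$ defined as the worst-case deficit over a fixed exhausting sequence of compacta. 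I expect the bulk of the write-up to be this packaging step — choosing $v_N$ so that one inequality covers both the ``close to $\mathbb R$'' and ``large $|z|$'' regimes — while the conceptual content is simply: uniformly bounded Nevanlinna functions converging pointwise to $0$ converge locally uniformly, with a rate independent of $z$.
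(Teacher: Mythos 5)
Your final strategy --- use that the Cauchy transforms $G_N(z)=\int(z-t)^{-1}d\rho_N(t)$ are analytic off $[-m,m]$, uniformly bounded on $\{\Im z\ge y\}$ for each $y>0$, converge to $0$ pointwise, hence (Vitali) locally uniformly, and then split $\mathbb C^+$ into $\{\Im z<\varepsilon\}$ (where the a priori bound $|G_N(z)|\le\|\rho_N\|/\Im z$ makes the \emph{weighted} quantity at most $\Im z\cdot\|\rho_N\|/(|z|^2+\sup\|\rho_N\|)<\varepsilon$) and $\{\Im z\ge\varepsilon\}$ (where $G_N\to0$ uniformly) --- is exactly the paper's argument. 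The only difference is that the paper simply cites Akhieser for the fact that weak${}^*$-convergence of the $\rho_N$ forces $G_N\to0$ \emph{uniformly} on each half-plane $\{\Im z\ge y\}$, whereas you re-derive it; your derivation also needs the (routine) observation that uniform smallness at infinity follows from $|G_N(z)|\le\sup_N\|\rho_N\|/(|z|-m)$ so that ``local uniform convergence'' really does upgrade to uniformity on the whole unbounded set $\{\Im z\ge\varepsilon\}$.

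The compactness detour, however, is genuinely broken and should be deleted. The set $\mathcal K'=\overline{\{\sum_k a_kt^k:\sum_k|a_k|m^k\le1\}}$ is \emph{not} compact in $C([-m,m])$: the normalized monomials $f_k(t)=t^k/m^k$ lie in $\mathcal K'$, have $\|f_k\|_\infty=1$, and converge pointwise on $[-m,m]$ to a discontinuous function, so they have no uniformly convergent subsequence. Equivalently, $\sup_{f\in\mathcal K'}|\int f\,d\rho_N|$ is the norm of $\rho_N$ acting on the Banach space whose unit ball is $\mathcal K'$; weak${}^*$-convergence of $\rho_N$ to $0$ in $C([-m,m])^*$ would force that norm to vanish only if the embedding of that space into $C([-m,m])$ were compact, and it is not. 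Since you abandon this approach midway and your final Vitali-plus-split argument stands on its own, the gap is not fatal --- but as written the proposal presents a false claim of compactness en route to the correct proof.
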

\begin{proof}
Fix $\varepsilon>0$. It is known from
\cite{akhieser} that
\begin{equation}\label{condition}
\lim_{N\to\infty}
\int_{\mathbb R}\frac{1}{z-t}\,d\rho_N(t)=0\textrm{ uniformly in }z\in\mathbb C^++iy\\
\end{equation}
for any fixed $y>0$. We prove that there exists $N(\varepsilon)\in\mathbb N$ such that 
$$
\left|\int_{\mathbb R}\frac{1}{z-t}\,d\rho_N(t)\right|
\frac{(\Im z)^2}{|z|^2+\sup_{N\in\mathbb N}{\|\rho_N\|}}<\varepsilon,\quad  z\in
\mathbb C^+,N\ge N(\varepsilon).
$$
The proof is then completed by setting $$v_N=\sup_{z\in\mathbb C^+}
\left|\int_{\mathbb R}\frac{1}{z-t}\,d\rho_N(t)\right|
\frac{(\Im z)^2}{|z|^2+\sup_{N\in\mathbb N}{\|\rho_N\|}}.$$

Indeed, as noted in \eqref{condition}, there exists  $N(\varepsilon)\in\mathbb 
N$ such that  $$\left|\int_{\mathbb R}\frac{1}{z-t}\,d\rho_N
(t)\right|<\varepsilon,\quad z\in\mathbb C^++i\varepsilon,N\ge N(\varepsilon).$$ On the other hand, for $\Im z
\in(0,\varepsilon]$ we have
\begin{eqnarray*}
\left|\int_{\mathbb R}\frac{1}{z-t}\,d\rho_N(t)\right|
\frac{(\Im z)^2}{|z|^2+\sup_{N\in\mathbb N}{\|\rho_N\|}} & \leq &
\int_{\mathbb R}\left|\frac{1}{z-t}\right|\,d|\rho_N|(t)
\frac{(\Im z)^2}{|z|^2+\sup_{N\in\mathbb N}{\|\rho_N\|}}\\
&\leq & \frac{1}{\Im z}|\rho_N|([-m,m])\frac{(\Im z)^2}{|z|^2+\sup_{N\in\mathbb N}{\|\rho_N\|}}\\
& \leq & \Im z\frac{\|\rho_N\|}{|z|^2+\sup_{N\in\mathbb N}{\|\rho_N\|}} < \Im z\leq\varepsilon.
\end{eqnarray*}
We conclude that $\lim_{N\to\infty}v_N=0$, as desired.
\end{proof}

An analogous result holds for $\mathbb T$.

\begin{lem}\label{convT}
Let $\{\omega_N\}_{N\in\mathbb N}$ be a family of analytic self-maps of the unit disc
such that
$\omega_N(0)=0$ and the limit $\omega(z)=\lim_{N\to\infty}\omega_N(z)$ exists for all $z\in
\mathbb D$.
Then there exists a sequence $\{v_N\}_{N\in\mathbb N}\subset(0,+\infty)$ converging to zero,
independent of $z$, such that
$$
\left|\omega_N(z)-\omega(z)\right|<
\frac{4}{(1-| z|)^2}v_N,\quad z\in\mathbb C^+,N\in
\mathbb N.
$$
\end{lem}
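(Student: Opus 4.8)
The plan is to reduce the statement on the disc to the statement on the half-plane (Lemma \ref{conv}) via a conformal change of variables, and to extract the uniform-in-$z$ rate from a Nevanlinna-type representation of the functions $\omega_N$. First I would record the basic structure: since each $\omega_N\colon\mathbb D\to\mathbb D$ is analytic with $\omega_N(0)=0$, the Schwarz lemma gives $|\omega_N(z)|\le|z|$ for all $z\in\mathbb D$, so the family $\{\omega_N\}$ is uniformly bounded, hence normal; the pointwise limit $\omega$ is therefore itself an analytic self-map of $\mathbb D$ with $\omega(0)=0$, and the convergence $\omega_N\to\omega$ is automatically locally uniform on $\mathbb D$ by Vitali's theorem. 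The content of the lemma is the \emph{explicit} dependence of the error on $|z|$, uniform in $N$, with a rate $v_N\to0$ that does not depend on $z$.

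The key step is to represent $\omega_N$ through a boundary measure. Writing $\omega_N(z)=z\,g_N(z)$, the function $g_N$ is analytic on $\mathbb D$ with $|g_N|\le1$ (again by Schwarz), so $\Re(1+g_N)/(1-g_N)\ge0$ on $\mathbb D$ if $g_N$ is nonconstant, and a Herglotz representation yields a probability measure $\sigma_N$ on $\mathbb T$ with
$$
\frac{1+g_N(z)}{1-g_N(z)}=\int_{\mathbb T}\frac{\zeta+z}{\zeta-z}\,d\sigma_N(\zeta)+i c_N,
\quad z\in\mathbb D,
$$
for a real constant $c_N$. Equivalently one can write $g_N$ itself as a ratio of such integrals. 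The pointwise convergence $\omega_N(z)\to\omega(z)$ on $\mathbb D$ translates, via this correspondence, into weak${}^*$ convergence of the measures $\sigma_N$ (and convergence of the $c_N$) to the data of the limit function $\omega$; here one uses that a Herglotz function is determined by its values on, say, a sequence accumulating in $\mathbb D$, together with a normal-families/subsequence argument to identify every weak${}^*$-limit point of $\{\sigma_N\}$ with the same measure. Passing to the difference $\omega_N-\omega$, one gets a representation of $\omega_N(z)-\omega(z)$ as $\int_{\mathbb T}K(z,\zeta)\,d\rho_N(\zeta)$ plus a constant multiple of $z$, where $\rho_N$ is a \emph{signed} measure on $\mathbb T$ of uniformly bounded total variation with $\rho_N\to0$ weak${}^*$, and $K(z,\zeta)$ is a Cauchy-type (Szeg\H{o}/Herglotz) kernel on $\mathbb D\times\mathbb T$ satisfying $|K(z,\zeta)|\le C/(1-|z|)$ uniformly. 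At that point the argument of Lemma \ref{conv} applies \emph{mutatis mutandis}: weak${}^*$ convergence gives uniform convergence to $0$ on each compact subset $\{|z|\le1-\delta\}$ (by Akhiezer's result quoted there, transported to the disc), while for $|z|$ close to $1$ the crude bound $|K(z,\zeta)|\le C/(1-|z|)$ together with $\sup_N\|\rho_N\|<\infty$ controls the tail, and balancing the two at $\delta\sim 1-|z|$ produces the stated bound $4(1-|z|)^{-2}v_N$ with $v_N=\sup_{z\in\mathbb D}|\omega_N(z)-\omega(z)|(1-|z|)^2\to0$.

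The main obstacle I expect is making the transfer from the disc to the half-plane completely clean, in particular ensuring the rate $v_N$ is genuinely independent of $z$. Two technical points need care. First, the conformal map $\mathbb D\to\mathbb C^+$ (say $z\mapsto i(1+z)/(1-z)$) does not preserve the normalization $\omega_N(0)=0$ in a way that directly matches the hypotheses of Lemma \ref{conv}; rather than force the reduction through that lemma verbatim, it is cleaner to re-run its short proof intrinsically on $\mathbb D$, using $(1-|z|)^{-1}$ in place of $(\Im z)^{-1}$ as the relevant blow-up factor near the boundary. Second, one must justify that the \emph{uniform} (not just locally uniform) decay quantity $v_N$ tends to $0$: this is exactly the computation at the end of Lemma \ref{conv}, where on the region $1-|z|\le\varepsilon$ the integrand is bounded by something like $(1-|z|)\,\|\rho_N\|\,(1-|z|)^{-2}\cdot(1-|z|)^2 \le \varepsilon$ trivially, and on the complementary compact region one invokes uniform convergence on compacta. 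I would present the proof by stating the Herglotz representation, deducing weak${}^*$ convergence of $\rho_N$, and then writing ``the argument of Lemma \ref{conv} applies with $\Im z$ replaced by $1-|z|$'' to avoid grinding through the identical estimate twice.
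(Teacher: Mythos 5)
Your two-region strategy (crude kernel bound near $\mathbb T$, locally uniform convergence by Montel on a compact interior disc), and your final choice $v_N=\sup_{z\in\mathbb D}|\omega_N(z)-\omega(z)|(1-|z|)^2$, are precisely what the paper does; the difference lies in how you arrive at the integral representation, and there your proposal has a genuine gap. Writing $\omega_N=zg_N$ and applying a Herglotz representation to $(1+g_N)/(1-g_N)$ represents a \emph{nonlinear} transform of $\omega_N$, and the difference of two such Cayley transforms does not linearize: the claimed identity $\omega_N(z)-\omega(z)=\int_{\mathbb T}K(z,\zeta)\,d\rho_N(\zeta)+cz$ with a signed measure $\rho_N$ of uniformly bounded mass does not follow from that step (and the step is void in the degenerate case $g_N\equiv1$, i.e.\ $\omega_N(z)=z$). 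The paper instead applies the Nevanlinna formula \eqref{N} to the self-map $z\mapsto i+\omega_N\bigl(\tfrac{z-i}{z+i}\bigr)$ of $\mathbb C^+$; the normalization $\omega_N(0)=0$ together with $|\omega_N|\le1$ forces $a=b=0$ and $\Omega(\mathbb R)=1$, which after transforming back gives $\omega_N(z)=2iz\int_{\mathbb R}\bigl(\tfrac{t-i}{t+i}-z\bigr)^{-1}\,d\rho_N(t)$ with $\rho_N$ a \emph{probability} measure. This representation is linear in $\rho_N$, so $\omega_N-\omega$ is literally an integral against $\rho_N-\rho$, and $\|\rho_N-\rho\|\le2$ is automatic. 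That also removes your reliance on weak${}^*$ convergence of the measures, which the paper never needs: the near-boundary estimate $|\omega_N(z)-\omega(z)|(1-|z|)^2\le4(1-|z|)$ holds uniformly in $N$ from the kernel bound $\bigl|\tfrac{t-i}{t+i}-z\bigr|\ge1-|z|$ and the total-variation bound alone, while Montel handles the interior compact disc. To make your proposal correct, replace the $(1+g_N)/(1-g_N)$ step with a representation of $\omega_N$ itself that is linear in a uniformly bounded family of measures, as above.
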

\begin{proof}
There exists a unique probability 
measure $\rho_N$ on $\mathbb R$ such that
\begin{equation}\label{4.4}
\omega_N(z)=2iz\int_\mathbb R\frac{1}{\frac{t-i}{t+i}-z}\,d\rho_N(t),\quad|z|<1.
\end{equation}
This is seen by applying \eqref{N} to the map 
$$
\mathbb C^+\ni z\mapsto i+\omega_N
\left(\frac{z-i}{z+i}\right)\in\mathbb C^+.
$$
Similarly,
$$
\omega(z)=2iz\int_\mathbb R\frac{1}{\frac{t-i}{t+i}-z}\,d\rho(t),\quad|z|<1.
$$
Since $t\mapsto\frac{t-i}{t+i}$ is 
a bijection from $\mathbb R$ to $\mathbb T\setminus\{1\}$, we have
$$
\left|\frac{t-i}{t+i}-z\right|\ge1-|z|,\quad z\in\mathbb D.
$$ 
Thus, 
$$
\left|\omega_N(z)-\omega(z)\right|(1-|z|)^2\leq2|z|(1-|z|)^2\int_\mathbb R\left|
\frac{1}{\frac{t-i}{t+i}-z}\right|\,d|\rho_N-\rho|(t)\leq4(1-|z|)<{\epsilon},
$$
provided that $|z|>1-\varepsilon/4$.
 On the other hand, Montel's theorem implies that the convergence
 $\omega_N\to\omega$ is uniform on compact subsets of $\mathbb D$, 
and thus there exists $N(\varepsilon)\in\mathbb N$ such that 
$$
|\omega_N(z)-\omega(z)|<
\varepsilon,\quad|z|\leq1-\epsilon/4,N\ge N(\varepsilon).
$$
The sequence
$$
v_N=\sup_{z\in\mathbb D}\left|\omega_N(z)-\omega(z)\right|(1-|z|)^2
$$
satisfies the conclusions of the lemma.
\end{proof}

The following lemma from  \cite[Appendix]{CD07}  is proved using 
ideas from \cite{HT}.  We use the notation $\mathcal{D}(\mathbb{R})$ for the space of infinitely 
differentiable, compactly supported functions $ \varphi:\mathbb{R}\to\mathbb{C}$.
\begin{lem}\label{HT} Let $\Delta$ be an analytic function on $\mathbb{C}\setminus \mathbb{R}$ 
which satisfies
\begin{equation*}\label{nestimgdif}
\vert \Delta(z)\vert \leq (\vert z\vert +K)^\alpha P(\vert \Im z\vert ^{-1})
\end{equation*} 
for some numbers $\alpha\ge1$, $K\ge0$, and polynomial $P$ with nonnegative coefficients.
For every $h\in\mathcal D(\mathbb R)$  there exists a constant $c>0$ independent of $\Delta$ such that
$$
\limsup _{y\rightarrow 0^+}\left|\int _{\mathbb{R}}h(x)
\Delta(x+iy)dx\right| \le c.
$$
\end{lem}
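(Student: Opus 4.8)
The plan is to show that $g_\Delta(y):=\int_{\mathbb R}h(x)\Delta(x+iy)\,dx$ is bounded, uniformly over $y\in(0,1)$ and uniformly over all functions $\Delta$ satisfying the stated estimate with the given $\alpha,K,P$; this yields the lemma, with $c$ equal to the uniform bound (and this is how the phrase ``independent of $\Delta$'' should be read). Write $P(t)=\sum_{j=0}^{d}a_jt^{j}$ with $a_j\ge0$ (so $d=\deg P$), fix $R_0>0$ with $\operatorname{supp}(h)\subseteq(-R_0,R_0)$, fix a cutoff $\chi\in\mathcal D(\mathbb R)$ with $\chi\equiv1$ on $[-\tfrac12,\tfrac12]$ and $\operatorname{supp}\chi\subseteq(-1,1)$, and set $m:=d$. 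First I would introduce the \emph{almost analytic extension} of $h$,
$$\widetilde h(u+iv)=\chi(v)\sum_{k=0}^{m}\frac{h^{(k)}(u)}{k!}(iv)^{k},\qquad u+iv\in\mathbb C,$$
which satisfies $\widetilde h\in C_c^\infty(\mathbb C)$, $\operatorname{supp}\widetilde h\subseteq[-R_0,R_0]\times(-1,1)$, and $\widetilde h|_{\mathbb R}=h$. A direct computation (the interior terms of the sum cancel) gives, with $\overline\partial=\tfrac12(\partial_u+i\partial_v)$,
$$\overline\partial\widetilde h(u+iv)=\tfrac12\Bigl[\chi(v)\frac{h^{(m+1)}(u)}{m!}(iv)^{m}+i\chi'(v)\sum_{k=0}^{m}\frac{h^{(k)}(u)}{k!}(iv)^{k}\Bigr];$$
since $\chi'\equiv0$ on $[-\tfrac12,\tfrac12]$, there is a constant $c_1=c_1(h)$ with $|\overline\partial\widetilde h(u+iv)|\le c_1|v|^{m}$ for $0<|v|<1$, and $\overline\partial\widetilde h$ vanishes outside $[-R_0,R_0]\times(-1,1)$.

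Next I would fix $y\in(0,1)$ and apply Green's theorem in the form $\oint_{\partial\Omega}F\,dz=2i\iint_{\Omega}\overline\partial F\,dA$ to $F(z):=\widetilde h(z-iy)\,\Delta(z)$ on $\Omega=\{z:\Im z>y\}$ (truncated by a large circle, on which $F$ vanishes). On $\overline\Omega\subseteq\mathbb C^{+}$ the function $\Delta$ is analytic, hence $F\in C^{1}(\overline\Omega)$ and $\overline\partial F(z)=(\overline\partial\widetilde h)(z-iy)\,\Delta(z)$; because $\widetilde h$ has compact support, $\partial\Omega$ effectively reduces to the line $\{\Im z=y\}$, on which $F(u+iy)=\widetilde h(u)\Delta(u+iy)=h(u)\Delta(u+iy)$. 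Therefore
$$g_\Delta(y)=2i\iint_{\Im z>y}(\overline\partial\widetilde h)(z-iy)\,\Delta(z)\,dA(z)=2i\iint_{\Im w>0}(\overline\partial\widetilde h)(w)\,\Delta(w+iy)\,dA(w),$$
the second equality by the translation $w=z-iy$.

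It then remains only to estimate this area integral. Its integrand is supported in $\{|\Re w|\le R_0,\ 0<\Im w<1\}$, where $|w+iy|\le R_0+2$, so $(|w+iy|+K)^{\alpha}\le(R_0+2+K)^{\alpha}=:c_2$ and hence $|\Delta(w+iy)|\le c_2\,P(1/(\Im w+y))$. Writing $v=\Im w$ and using $|\overline\partial\widetilde h|\le c_1v^{m}$,
$$|g_\Delta(y)|\le 2c_1c_2\int_0^1\!\int_{-R_0}^{R_0}v^{m}P\!\left(\frac1{v+y}\right)du\,dv=4R_0c_1c_2\sum_{j=0}^{d}a_j\int_0^1\frac{v^{m}}{(v+y)^{j}}\,dv.$$
Since $m=d\ge j$, we have $v^{m}/(v+y)^{j}\le v^{m-j}$, so $\int_0^1 v^{m}(v+y)^{-j}\,dv\le\int_0^1 v^{m-j}\,dv\le1$ uniformly in $y>0$. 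Thus $|g_\Delta(y)|\le 4R_0c_1c_2\sum_{j=0}^{d}a_j=:c$ for all $y\in(0,1)$, with $c$ depending only on $h,\alpha,K,P$; in particular $\limsup_{y\to0^{+}}|g_\Delta(y)|\le c$, which is the assertion.

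The one genuinely delicate step, and the heart of the argument, is the application of Green's theorem: because $\Delta$ blows up on $\mathbb R$, one must keep the contour at a fixed positive height $y$ so that $\Delta$ is analytic on $\overline\Omega$, and exploit the compact support of $\widetilde h$ to discard the part of $\partial\Omega$ near infinity; the growth bound on $\Delta$ then enters only qualitatively here and quantitatively in the final display, where the vanishing of $\overline\partial\widetilde h$ to order $m=\deg P$ along $\mathbb R$ is exactly what makes $\int_0^1 v^{m}(v+y)^{-j}\,dv$ bounded uniformly in $y$. A more computational variant avoids the $\overline\partial$-calculus altogether: form iterated primitives $\Delta_1,\dots,\Delta_{d+1}$ of $\Delta$ on $\mathbb C^{+}$ (integrating along the path that runs horizontally at height $1$ and then straight down to the point in question), check inductively that each primitive loses one power of $1/\Im z$ near $\mathbb R$ (a logarithm appears at the penultimate step and is absorbed by the last integration), so that $\Delta_{d+1}$ is bounded on $\operatorname{supp}(h)\times(0,1)$, and then integrate by parts $d+1$ times to get $g_\Delta(y)=(-1)^{d+1}\int_{\mathbb R}h^{(d+1)}(x)\,\Delta_{d+1}(x+iy)\,dx$, which is manifestly bounded for $y\in(0,1)$.
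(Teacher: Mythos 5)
The paper does not actually prove Lemma \ref{HT}; it is quoted from the appendix of the cited Capitaine--Donati-Martin paper, which in turn adapts the Haagerup--Thorbj\o{}rnsen technique. So there is no ``paper proof'' to match, but your argument can still be assessed and compared with the cited one.

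Your primary argument, via the almost-analytic extension and the Cauchy--Pompeiu (Green) formula, is correct. The computation of $\overline\partial\widetilde h$ is right; taking $m=\deg P$ is exactly the choice that makes $|\overline\partial\widetilde h(u+iv)|\le c_1|v|^{\deg P}$ dominate $P(1/(v+y))$ uniformly in $y\in(0,1)$; keeping the contour at height $y>0$ ensures $\Delta$ is analytic on $\overline\Omega$ so the Green's-theorem identity $\oint_{\partial\Omega}F\,dz=2i\iint_\Omega\overline\partial F\,dA$ is legitimate; and the compact support of $\widetilde h$ discards the far boundary. The constant you obtain depends only on $h$, $\alpha$, $K$ and $P$, which is the correct reading of ``independent of $\Delta$.'' One small presentational remark: for $1/2\le|v|<1$ the bound $|\overline\partial\widetilde h|\le c_1|v|^m$ requires the observation that there $|v|^m\ge 2^{-m}$, so the $\chi'$ term is absorbed by enlarging $c_1$; you might say this explicitly.

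The route in the cited references is the iterated-primitive/integration-by-parts argument you sketch in your closing paragraph: build $d+1$ successive antiderivatives of $\Delta$ in $\mathbb{C}^+$, each gaining one power of $\Im z$ near $\mathbb{R}$ (a logarithm at the penultimate stage that is absorbed by the final integration), then integrate by parts $d+1$ times against $h$ and bound $\Delta_{d+1}$ on $\mathrm{supp}(h)\times(0,1)$. Your Helffer--Sj\"ostrand version is a clean repackaging of the same computation: the $m$-th order vanishing of $\overline\partial\widetilde h$ at $\mathbb{R}$ plays exactly the role of the $d+1$ integrations, and everything is wrapped into a single area integral instead of an induction. Both are valid; the $\overline\partial$ version has the advantage that the dependence of $c$ on $(h,\alpha,K,P)$ is visible in one line, while the primitives version avoids introducing the almost-analytic extension machinery.
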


We also record a result from \cite[Lemma 6.3]{Capitaine11} on the boundary behaviour of
a certain Poisson kernel convolution (recall \eqref{ConvPoisson} and the comments following it).
We use $\mathbb E$
to denote the expectation. If $v\colon\mathbb R\to\mathbb C$ is a continuous function
and $A$ is a selfadjoint matrix, then $v(A)$ is constructed using the continuous functional calculus.

\begin{lem}\label{approxcauchy}
Given a deterministic $N\times N$ Hermitian matrix $C_N$, a random $N\times N$
Hermitian matrix $X_N$, and a continuous function $h\colon\mathbb R\to\mathbb R$ with compact support, we have
$$
\mathbb{E}\left[{ \rm Tr}_N \left[ h (X_N) C_N\right] \right]= \lim_{y\downarrow 0}
\frac{1}{\pi} \Im \int_\mathbb R  \mathbb{E} \left[ {\rm Tr}_N 
[\left(X_N-(t+iy)I_N\right)^{-1}C_N] \right]h(t)\, dt. 
$$
\end{lem}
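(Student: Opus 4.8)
The plan is to reduce the identity, for each fixed realization of $X_N$, to the Stieltjes inversion formula \eqref{ConvPoisson} applied to a suitable finite signed measure, and then to interchange the expectation with the $t$-integral and with the limit $y\downarrow0$ using estimates that are uniform over realizations, so that no a priori bound on $\|X_N\|$ is needed.

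First I would fix a realization and write the spectral decomposition $X_N=\sum_j\lambda_jE_j$, where the $\lambda_j$ are the distinct eigenvalues and $E_j$ the associated spectral projections. Since $C_N=C_N^*$, the numbers $c_j:={\rm Tr}_N[E_jC_N]={\rm Tr}_N[E_jC_NE_j]$ are real, so $\nu:=\sum_jc_j\delta_{\lambda_j}$ is a finite signed measure with total variation $\|\nu\|\le\|C_N\|\sum_j{\rm Tr}_N[E_j]=N\|C_N\|$, a bound independent of the realization. The continuous functional calculus gives $\int_{\mathbb R}h\,d\nu={\rm Tr}_N[h(X_N)C_N]$, while for $z\in\mathbb C^+$,
$$
{\rm Tr}_N\big[(X_N-zI_N)^{-1}C_N\big]=\sum_j\frac{c_j}{\lambda_j-z}=-G_\nu(z);
$$
hence $\tfrac1\pi\Im{\rm Tr}_N[(X_N-(t+iy)I_N)^{-1}C_N]=-\tfrac1\pi\Im G_\nu(t+iy)$ is exactly the Poisson integral of $\nu$, namely $\int_{\mathbb R}P_y(\lambda-t)\,d\nu(\lambda)$ with $P_y(s)=\tfrac1\pi\tfrac{y}{s^2+y^2}$.

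Next I would integrate against $h(t)\,dt$; interchanging the finite sum with the integral gives
$$
\frac1\pi\Im\int_{\mathbb R}{\rm Tr}_N\big[(X_N-(t+iy)I_N)^{-1}C_N\big]h(t)\,dt=\int_{\mathbb R}h_y\,d\nu,\qquad h_y(\lambda):=\int_{\mathbb R}P_y(\lambda-t)h(t)\,dt.
$$
Since $h$ is bounded and uniformly continuous (being continuous with compact support), the standard approximate-identity argument yields $\|h_y\|_\infty\le\|h\|_\infty$ and $\|h_y-h\|_\infty\to0$ as $y\downarrow0$; together with the uniform bound on $\|\nu\|$ this gives, for every realization, $\big|\int_{\mathbb R}h_y\,d\nu-{\rm Tr}_N[h(X_N)C_N]\big|\le N\|C_N\|\,\|h_y-h\|_\infty$, which tends to $0$ as $y\downarrow0$, while $\big|\int_{\mathbb R}h_y\,d\nu\big|\le N\|C_N\|\,\|h\|_\infty$ uniformly in $y>0$ and in the realization.

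Finally I would take expectations. For fixed $y>0$ one has $|{\rm Tr}_N[(X_N-(t+iy)I_N)^{-1}C_N]|\le N\|C_N\|/y$, so, $h$ having compact support, Fubini's theorem lets me move $\mathbb E$ through the $t$-integral:
$$
\frac1\pi\Im\int_{\mathbb R}\mathbb E\big[{\rm Tr}_N[(X_N-(t+iy)I_N)^{-1}C_N]\big]h(t)\,dt=\mathbb E\Big[\int_{\mathbb R}h_y\,d\nu\Big].
$$
The previous step shows that the integrand on the right converges, as $y\downarrow0$, to ${\rm Tr}_N[h(X_N)C_N]$ for every realization and is dominated by the constant $N\|C_N\|\,\|h\|_\infty$; dominated convergence then yields the asserted formula. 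The argument involves no real obstacle: the only delicate point is that $\|X_N\|$ is uncontrolled, but the estimates $\|\nu\|\le N\|C_N\|$ and $|{\rm Tr}_N[(X_N-(t+iy)I_N)^{-1}C_N]|\le N\|C_N\|/y$ are uniform over realizations, which is precisely what makes the Fubini interchange in $t$ and the dominated convergence in $y$ legitimate.
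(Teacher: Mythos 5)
Your proof is correct and complete. Note that the paper does not actually prove this lemma; it cites \cite[Lemma 6.3]{Capitaine11}, so your argument is supplying the details of the cited result, and it does so by the natural route. For each fixed realization you identify $\tfrac1\pi\,\Im\,\mathrm{Tr}_N[(X_N-(t+iy)I_N)^{-1}C_N]$ with the Poisson integral of the finite signed measure $\nu=\sum_j\mathrm{Tr}_N[E_jC_N]\,\delta_{\lambda_j}$, so that after integrating against $h$ one obtains $\int h_y\,d\nu$ with $h_y=P_y*h$, and the approximate-identity property of the Poisson kernel (using only that $h$ is bounded and uniformly continuous) gives the $y\downarrow0$ limit pointwise in the realization. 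The real point of the lemma, which you make explicit and handle correctly, is that no bound on $\|X_N\|$ is available: the estimates $\|\nu\|\le N\|C_N\|$ and $|\mathrm{Tr}_N[(X_N-(t+iy)I_N)^{-1}C_N]|\le N\|C_N\|/y$ are uniform over realizations, the first furnishing the domination constant $N\|C_N\|\|h\|_\infty$ that permits passing $\lim_{y\downarrow0}$ under $\mathbb E$, and the second legitimizing the Fubini interchange of $\mathbb E$ with the $t$-integral over the compact support of $h$. I see no gaps.
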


\subsection{Matrix-valued functions and maps}
An essential ingredient in our analysis is the resolvent of $A_N$ and of the matrices
$X_N$ (depending on the model considered, $X_N=A_N+U_N^*B_NU_N$, $X_N=A_N^{1/2}
U_N^*B_NU_NA_N^{1/2}$ or $X_N=A_NU_N^*B_NU_N$). We denote by
\begin{equation}\label{RN}
R_N(z)=\left(zI_N-X_N\right)^{-1},\quad z\not\in\sigma(X_N)
\end{equation}
the resolvent of $X_N$. It is a random matrix-valued rational function with poles in  $\mathbb R$ for the 
first two models and $\mathbb T$  for the third. For the first two models, it has the following properties:
\begin{enumerate}
\item $R_N(\overline{z})=R_N(z)^*$. In particular, $R_N(x)$ is selfadjoint if $x\in\mathbb R
\setminus\sigma(X_N).$
\item $R_N$ is analytic at $\infty$, and $\lim_{z\to\infty}zR_N(z)=I_N$, where $I_N$ denotes the 
$N\times N$ identity matrix. The limit is in the norm topology of $M_N(\mathbb C)\otimes 
L^\infty({\rm U}(N),m_N)$, where $m_N$ denotes the Haar measure on ${\rm U}(N)$.
\item With the notation $\Re T=(T+T^*)/2$ and $\Im T=(T-T^*)/2i$ for the real and imaginary
parts of $T$, respectively, we have 
$$
-\Im R_N(z)=\Im z\left((\Im z)^2I_N+(\Re zI_N-X_N)^2\right)^{-1}\ge\frac{\Im z}{|z|^2+2|\Re z|\|
X_N\|+\|X_N\|^2}I_N.
$$
This last quantity is uniformly bounded below in $N$ for $z$ in any fixed compact set $K
\subseteq\mathbb C^+$. In particular, if $C>0$ is such that $\sup_N  \|X_N\| \leq C$,
\begin{equation}
-\Im\mathbb E\left[(zI_N-X_N)^{-1}\right]\ge\frac{\Im z}{|z|^2+2|\Re z|C+C^2}I_N.
\end{equation}
\end{enumerate}
For the unitary model, a slightly different property is needed.
\begin{enumerate}
\item[(a)] If $z\in\mathbb D$, we have $\sigma(zX_N)\subset\mathbb D$, and thus  $\sigma\left((I_N-zX_N)^{-1}\right)\subset\{w\in\mathbb C\colon\Re w>
1/2\}$.  Therefore 
$$
\Re\left[\frac1zR_N\left(\frac1z\right)\right]>\frac12I_N,\quad z\in\mathbb D.
$$ 
(This observation uses the fact that $X_N$ is unitary, and hence normal.)
\end{enumerate}

The following lemma is a fairly straightforward  generalization of a result of Hurwitz. A similar result appears 
in \cite{BGRao09}. In the statement, we use $K_\delta$ to denote the subset of $\gamma$ consisting of all points at distance strictly less 
than $\delta$ from $K$. In the special case $K=\{\rho\}$, we write $(\rho-\delta,\rho+\delta)$ instead of $K_\delta$.

\begin{lem}\label{alt-Benaych-Rao}
Let $\gamma$ be a simple analytic curve in ${\mathbb C}$,  let $K\subsetneq\gamma$ 
be compact, and let $r$ be a positive integer.  Consider an analytic function 
$F\colon\overline{\mathbb{C}}\setminus K\to M_{r}
(\mathbb{C})$ such that $F(z)$ is diagonal
for each $z\in{\mathbb{C}}\setminus 
K$, $F(\infty)=I_r$ and $
z\mapsto(F(z))_{ii}\in\mathbb C$ has only
simple zeros, all of which are contained in $\gamma\setminus K$,
$1\leq i\leq r$. Fix $\delta>0$ such that $\det(F)$ has no zeroes on the boundary of $K_\delta$ relative to 
$\gamma$, and let $
\rho_{1},\dots,\rho_{s}\in\gamma$
be a list of those points $z\in\mathbb C\setminus K_\delta$ for which $F(z)$  is not
invertible.

Suppose that there exist  positive numbers $\{\delta_{N}\}_{N\in\mathbb{N}}$
 and  analytic maps $F_{N}\colon\overline{\mathbb{C}}\setminus 
K_{\delta_{N}}\to M_{r}(\mathbb{C})$, $N\in\mathbb N,$
such that{\rm:}
\begin{enumerate}
\item $\lim_{N\to\infty}\delta_N=0;$
\item $F_{N}(z)$ is invertible for  $z\in\mathbb{C}\setminus\gamma$ and $N\in\mathbb N;$ and
\item $F_{N}$ converges  to $F$ uniformly on compact subsets of $\overline{\mathbb{C}}
\setminus K$.
\end{enumerate}

Then{\rm:}
\begin{enumerate}
\item [(i)] $\dim(\ker(F(\rho_{j}))$ equals the order of $\rho_{j}$ as
a zero of $z\mapsto\det(F(z));$
\item [(ii)] Given $\varepsilon>0$ such that 
\[
\varepsilon<\frac{1}{2}\min\{|\rho_{i}-\rho_{j}|,{\rm dist}(\rho_{i},K_{\delta})\colon1\leq i\neq j\leq{s}\},
\]
there exists an integer $N_{0}$ such that for $N\ge N_{0}$, we have
\begin{enumerate}
\item [-]counting multiplicities, $\det(F_{N})$ has exactly $\dim(\ker(F(\rho_{j})))$ zeroes in
$(\rho_{j}-\varepsilon,\rho_{j}+\varepsilon)\subset\gamma$, $j =1,\ldots,s$,
and
\item [-] $\{z\in
{\mathbb{C}}\setminus K_{\delta}\colon\det(F_{N}(z))=0\}\subset\bigcup_{j=1}^{s}(\rho_{j}-\varepsilon,\rho_{j}+\varepsilon).$ 
\end{enumerate}
\end{enumerate}
\end{lem}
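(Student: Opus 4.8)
The plan is to reduce everything to a scalar statement about the determinant function $z\mapsto\det(F(z))$ and its approximants $\det(F_N)$, and then invoke the argument principle (Rouché/Hurwitz) on suitable contours lying in $\gamma$. First I would establish (i): since $F(z)$ is diagonal, $\det(F(z))=\prod_{i=1}^r(F(z))_{ii}$, and each factor has only simple zeros in $\gamma\setminus K$. Hence the order of $\rho_j$ as a zero of $\det(F)$ equals the number of indices $i$ with $(F(\rho_j))_{ii}=0$, which is precisely $\dim\ker(F(\rho_j))$ because $F(\rho_j)$ is diagonal. This also shows the zeros of $\det(F)$ in $\mathbb C\setminus K_\delta$ are finitely many (the $\rho_1,\dots,\rho_s$): indeed, the zeros of $\det(F)$ are isolated since $\det(F)$ is analytic and not identically $1$ away from $K$, and they accumulate only on $K$; intersecting with the compact set $\{z\in\gamma\colon\mathrm{dist}(z,K)\ge\delta\}$ (on which $\det(F)$ has no zeros on the boundary by hypothesis, and which is compact because $\gamma$ is a simple analytic curve) leaves only finitely many, and all lie on $\gamma$ by hypothesis.

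For (ii), fix $\varepsilon$ as in the statement, small enough that the $2\varepsilon$-neighbourhoods (relative to $\gamma$) of the $\rho_j$ are pairwise disjoint and disjoint from $K_\delta$. Around each $\rho_j$ I would choose a small closed contour $\Gamma_j$ in $\mathbb C$ that encircles $\rho_j$ once, stays within distance $\varepsilon$ of $\rho_j$, and—crucially—meets $\gamma$ only near $\rho_j$; since $\rho_j$ is a simple point of the analytic curve $\gamma$, a small circle centred at $\rho_j$ will do, and $\Gamma_j\setminus\{$two points$\}\subset\mathbb C\setminus\gamma$. By hypothesis (2), $\det(F_N)$ has no zeros on $\mathbb C\setminus\gamma$, hence none on $\Gamma_j$ except possibly at its two intersection points with $\gamma$; a further perturbation of the radius avoids those, so $\det(F_N)$ and $\det(F)$ are both zero-free on $\Gamma_j$ for $N$ large (using that $\det(F)\ne0$ on $\Gamma_j\setminus\{\rho_j\text{-side}\}$ by the spacing of the $\rho_i$). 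Uniform convergence $F_N\to F$ on the compact set $\Gamma_j$ (hypothesis (3)), together with a uniform lower bound $\inf_{\Gamma_j}|\det F|>0$, gives $\sup_{\Gamma_j}|\det F_N-\det F|<\inf_{\Gamma_j}|\det F|$ for $N\ge N_0$, so Rouché's theorem yields that $\det(F_N)$ and $\det(F)$ have the same number of zeros inside $\Gamma_j$, namely $\dim\ker(F(\rho_j))$; since $\det(F_N)$ vanishes only on $\gamma$, these zeros lie in $(\rho_j-\varepsilon,\rho_j+\varepsilon)\subset\gamma$, giving the first bullet. For the second bullet, apply the same Rouché argument on the boundary (relative to $\gamma$, thickened slightly into $\mathbb C$) of the compact set $\gamma\setminus(K_\delta\cup\bigcup_j(\rho_j-\varepsilon,\rho_j+\varepsilon))$: on this set $\det(F)$ is bounded away from $0$, so for $N$ large $\det(F_N)$ is also nonvanishing there, which combined with hypothesis (2) (no zeros off $\gamma$) and the domain $\overline{\mathbb C}\setminus K_{\delta_N}$ of $F_N$ (note $\delta_N\to0$, so eventually $K_{\delta_N}\subset K_\delta$) confines all zeros of $\det(F_N)$ outside $K_\delta$ to $\bigcup_j(\rho_j-\varepsilon,\rho_j+\varepsilon)$.

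The main obstacle, and the point requiring the most care, is the bookkeeping around the fact that $\det(F_N)$ is zero-free \emph{off} $\gamma$ but $F_N$ is only defined on $\overline{\mathbb C}\setminus K_{\delta_N}$: one must choose the Rouché contours $\Gamma_j$ to lie in $\mathbb C$ (not in $\gamma$), crossing $\gamma$ transversally at points where no $\det(F_N)$ vanishes, and argue that the winding-number count of $\det(F_N)$ around $\Gamma_j$ is robust. This is where the hypothesis that $\gamma$ is a \emph{simple analytic} curve and that $K\subsetneq\gamma$ is compact (so $K_\delta$ stays away from the $\rho_j$) is used: it guarantees local coordinates near each $\rho_j$ in which $\gamma$ is a straight segment, making the construction of $\Gamma_j$ and the perturbation of its radius entirely routine. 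Everything else—uniform lower bounds on $|\det F|$ on compact zero-free sets, and the passage from $F_N\to F$ to $\det F_N\to\det F$—is standard and follows from continuity of the determinant and compactness.
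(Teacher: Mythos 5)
Your proof is correct and takes essentially the same route as the paper: both rest on the uniform convergence $\det F_N\to\det F$ on compact subsets of $\overline{\mathbb C}\setminus K$ together with a Hurwitz/Rouch\'e zero count, supplemented by hypothesis (2) to force the zeros onto $\gamma$. The only cosmetic difference is that the paper invokes Hurwitz once on the region $\overline{\mathbb{C}}\setminus K_\delta$, whereas you localize via small circles $\Gamma_j$ around each $\rho_j$ plus a separate non-vanishing argument on the remaining compact part of $\gamma$; the two bookkeepings are interchangeable.
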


\begin{proof}
Assertion (i) is obvious.
The functions $f_{N}(z)=\det(F_{N}(z))$ converge to  $f(z)=\det(F(z))$ 
uniformly on compact subsets of $\overline{\mathbb{C}}\setminus K$.  
The Theorem of Hurwitz (see \cite[Kapitel 8.5]{Remmert84}) guarantees that, for sufficiently large $N$, $f_{N}$ 
has  (counting multiplicities) exactly as many zeros as $f$ in $\overline{\mathbb{C}}
\setminus K_{\delta}$. All the zeros of $f_{N}$ were assumed to be in $\gamma$ and 
therefore these zeros cluster around $\{\rho_{1},\dots,\rho_{s}\}$
in the following sense: for any given $\varepsilon>0$, there exists
an $N_{\varepsilon}\in\mathbb{N}$ so that 
\[
\{z\in
{\mathbb{C}}\setminus K_{\delta}\colon\det(F_{N}(z))=0\}\subset
\bigcup_{j=1}^{s}(\rho_{j}-\varepsilon,\rho_{j}+\varepsilon)
\]
when $N\ge N_{\varepsilon}$. When $\varepsilon>0$ is small enough,
there are (counting multiplicities) exactly $\mathrm{dim}(\ker(F(\rho_{j}))$
zeros of $f_{N}$ in $(\rho_{j}-\varepsilon,\rho_{j}+\varepsilon)$. 
\end{proof}

Later, we  apply this lemma to $\gamma=\mathbb R$ and $\gamma=\mathbb T$,
in order to control the behaviour of functions related to the resolvent  $R_N$.\\

Next, we collect some facts about matrix functions and maps on matrix spaces
which commute with the operation of conjugation by unitary matrices. First, an analog of the
Nevanlinna representation for matrix-valued functions \cite[Section 5]{gesz-tse}. Let
$m>0$ be fixed and let $F\colon\mathbb C\setminus[-m,m]\to M_N(\mathbb C)$ be an analytic
function. Assume that $\Im F(z)=(F(z)-F(z)^{*})/2i$ is nonnegative definite
for $z\in\mathbb{C}^{+}$, and $F(x)=F(x)^{*}$ for $x\in\mathbb{R}\setminus[-m,m]$.
Then $F$ can be represented as
\begin{equation}\label{4.5}
F(z)=A+Bz-\int_{[-m,m]}\frac{d\rho(t)}{z-t},\quad z\in\mathbb{C}\setminus[-m,m],
\end{equation}
where $A$ is a selfadjoint matrix, $B\ge0$, and $\rho$ is a  measure
with values in $M_{N}(\mathbb{C})$ such that $\rho(S)\ge0$
for every Borel set $S \subset\mathbb{R}$. Observe that
\[
\rho(\mathbb{R})=\lim_{z\to\infty}z(A+Bz-F(z)).
\]
The norm of such a function can obviously be estimated as
\[
\|F(z)\|\le\|A\|+\|B\||z|+\frac{\|\rho(\mathbb{R})\|}{\Im z},\quad z\in\mathbb{C}^{+}.
\]
The specific situation we have in mind is as follows. Let $X$ be
a random selfadjoint matrix in $M_{N}(\mathbb{C})$
such that $\|X\| \leq m$ almost surely. Pick $b\in M_N(\mathbb C)$ such that
$\Im b:=(b-b^*)/2i>0$ (that is, $\Im b$ is positive definite).
The matrix $\mathbb{E}[(\Re b+z\Im b-X)^{-1}]$ is analytic in $z$, it is invertible for
$$
z\in\mathbb{C}\setminus[-(m+\|\Re b\|)\|(\Im b)^{-1}\|,(m+\|\Re b\|)\|(\Im b)^{-1}\|],
$$ 
and it is selfadjoint for $$
z\in\mathbb{R}\setminus[-(m+\|\Re b\|)\|(\Im b)^{-1}\|,(m+\|\Re b\|)\|(\Im b)^{-1}\|].
$$
Moreover, 
$$
\Im\mathbb{E}[(\Re b+z\Im b-X)^{-1}] < 0,\quad z\in\mathbb{C}^{+}.
$$
It follows that the function $F(z)=(\mathbb{E}[(\Re b+z\Im b-X)^{-1}])^{-1}$
satisfies the properties required for it to have a representation of the form
\eqref{4.5}. The matrices $A,B,$ and $\rho(\mathbb{R})$ are easily determined. Indeed, we have
\begin{eqnarray*}
(\mathbb{E}[(\Im b-\varepsilon (X-\Re b))^{-1}])^{-1}&=&\Im b-\varepsilon\mathbb{E}[X-\Re b]-\varepsilon^{2}\left[\mathbb{E}[X(\Im b)^{-1}X]\right.\\
& & \mbox{}-\left.\mathbb{E}[X](\Im b)^{-1}\mathbb E[X]\right]+O(\varepsilon^{3})
\end{eqnarray*}
as $\varepsilon\to0$. Substituting $\varepsilon=1/z$, we obtain
\begin{equation}\label{4.6}
F(z)=z\Im b-\mathbb{E}[X]+\Re b -
\frac{\mathbb{E}[(X-\mathbb{E}[X])(\Im b)^{-1}(X-\mathbb{E}[X])]}{z}+O\left(\frac{1}{z^{2}}\right)
\end{equation}
as $z\to\infty$. This yields 
\begin{equation}\label{4.7}
A=-\mathbb{E}[X]+\Re b,\quad \textbf{$B=\Im b$},\quad
\text{and}\quad \rho(\mathbb{R})=\mathbb{E}[(X-\mathbb{E}[X])(\Im b)^{-1}(X-\mathbb{E}[X])].
\end{equation}
We are mostly, but not exclusively, interested in the case $\Im b=I_N$.

These observations apply to the variables $X_N$ from our models. We begin with 
$X_N=A_{N}+U_{N}^{*}B_{N}U_{N}$, 
where $(A_{N})$ and $(B_{N})$ are any sequences of deterministic real diagonal matrices of size
$N\times N$ with uniformly bounded norms and limiting distributions $\mu$ and $ \nu$,
respectively. 
As before, $R_{N}(z)=(zI_{N}-X_{N})^{-1}$. More generally, if 
$b\in M_{N}(\mathbb{C})$
satisfies 
$\Im b>0$, then $R_{N}(b)=(b-X_{N})^{-1}$.
\begin{lem}\label{l}
The function $b\mapsto\mathbb{E}[R_{N}(b)]$ takes values in ${\rm GL}(N)$ whenever $\Im b>0$.
Moreover,
$$
\Im\mathbb{E}[R_{N}(b)]^{-1}\ge\Im b\text{ and }\|\mathbb E[R_{N}(b)]^{-1}\|\le\|b\|+C_1+{4C_2}{\|(\Im b)^{-1}\|},\quad\Im b>0,
$$
 where $C_{1}=\sup_{N}(\Vert A_{N}\Vert+\Vert B_{N}\Vert)$,
and $C_{2}=\sup_{N}({\rm tr}_{N}(B_{N}^{2})-[{\rm tr}_{N}(B_{N})]^{2})$. 
In particular,  
\begin{equation}
\Im\mathbb{E}[R_{N}(z)]^{-1}\ge\Im zI_N\text{ and }\Vert\mathbb{E}[R_{N}(z)]^{-1}\Vert\le|z|+C_{1}+\frac{4C_{2}}{|\Im z|},\quad z\in\mathbb{C}^+.\label{borne+}
\end{equation}
\end{lem}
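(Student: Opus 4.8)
The plan is to read off both the invertibility of $\mathbb{E}[R_N(b)]$ and the two estimates from the matrix-valued Nevanlinna representation \eqref{4.5}, applied along the complex line through $b$. First, if $\Im b>0$ then $b-X_N$ has positive definite imaginary part $\Im b$ (because $X_N=X_N^*$), hence is invertible with $\|R_N(b)\|=\|(b-X_N)^{-1}\|\le\|(\Im b)^{-1}\|$, and $\Im R_N(b)=-(b-X_N)^{-1}(\Im b)((b-X_N)^{-1})^*<0$ uniformly in $U_N$. Consequently $\mathbb{E}[R_N(b)]$ is well defined and $\Im\mathbb{E}[R_N(b)]<0$, so $\mathbb{E}[R_N(b)]\in{\rm GL}(N)$; this is the first assertion.

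To obtain the sharper statements I would fix $b$ with $\Im b>0$ and consider, as in the paragraph preceding the lemma, the analytic function
$$
F(z)=\Big(\mathbb{E}\big[(\Re b+z\Im b-X_N)^{-1}\big]\Big)^{-1},\qquad z\in\mathbb{C}\setminus[-m,m],\ \ m=(\|X_N\|+\|\Re b\|)\|(\Im b)^{-1}\| .
$$
It satisfies $\Im F(z)>0$ on $\mathbb{C}^+$ and $F(x)=F(x)^*$ for real $x$ outside $[-m,m]$, hence has a representation \eqref{4.5}, $F(z)=A+Bz-\int d\rho(t)/(z-t)$ with $B\ge0$ and $\rho$ a positive $M_N(\mathbb{C})$-valued measure; comparing with the expansion \eqref{4.6} identifies $A=\Re b-\mathbb{E}[X_N]$, $B=\Im b$, and $\rho(\mathbb{R})=\mathbb{E}[(X_N-\mathbb{E}[X_N])(\Im b)^{-1}(X_N-\mathbb{E}[X_N])]$ as in \eqref{4.7}. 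Since $\Re b+i\Im b=b$, evaluating at $z=i$ gives $F(i)=\mathbb{E}[R_N(b)]^{-1}$, and the representation yields $\Im F(i)=B+\int(1+t^2)^{-1}\,d\rho(t)\ge B=\Im b$, which is the first inequality.

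For the norm bound I would write $F(i)=(A+iB)-\int d\rho(t)/(i-t)$. Here $A+iB=b-\mathbb{E}[X_N]$, so $\|A+iB\|\le\|b\|+\|\mathbb{E}[X_N]\|\le\|b\|+C_1$ since $\|X_N\|\le\|A_N\|+\|B_N\|\le C_1$; and writing $1/(i-t)=-t/(1+t^2)-i/(1+t^2)$ and bounding each of the two resulting selfadjoint integrals gives $\|\int d\rho(t)/(i-t)\|\le 4\|\rho(\mathbb{R})\|$, with room to spare. Finally $(\Im b)^{-1}\le\|(\Im b)^{-1}\|I_N$ together with the selfadjointness of $X_N-\mathbb{E}[X_N]$ gives $\rho(\mathbb{R})\le\|(\Im b)^{-1}\|\,\mathbb{E}[(X_N-\mathbb{E}[X_N])^2]$, and the Haar-invariance identities $\mathbb{E}[U_N^*CU_N]=\mathrm{tr}_N(C)I_N$ (used with $C=B_N$ and $C=B_N^2$) and $\mathbb{E}[A_NU_N^*B_NU_N]=\mathrm{tr}_N(B_N)A_N$ yield $\mathbb{E}[X_N]=A_N+\mathrm{tr}_N(B_N)I_N$ and, after expanding the square,
$$
\mathbb{E}\big[(X_N-\mathbb{E}[X_N])^2\big]=\big(\mathrm{tr}_N(B_N^2)-\mathrm{tr}_N(B_N)^2\big)I_N\le C_2 I_N .
$$
Hence $\|\rho(\mathbb{R})\|\le C_2\|(\Im b)^{-1}\|$, and assembling the three bounds gives $\|\mathbb{E}[R_N(b)]^{-1}\|\le\|b\|+C_1+4C_2\|(\Im b)^{-1}\|$; then \eqref{borne+} is simply the case $b=zI_N$, $z\in\mathbb{C}^+$, where $\|b\|=|z|$ and $\|(\Im b)^{-1}\|=1/\Im z$. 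The argument is routine once \eqref{4.5} is available; the only points that require attention are recombining $A+iB=b-\mathbb{E}[X_N]$ \emph{before} estimating (so as to get $\|b\|$ rather than $\|\Re b\|+\|\Im b\|$) and the exact evaluation of $\mathbb{E}[(X_N-\mathbb{E}[X_N])^2]$, which is what makes the bound depend on the variance $C_2$ instead of on the much larger $\|B_N-\mathrm{tr}_N(B_N)I_N\|^2$.
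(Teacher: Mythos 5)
Your proof is correct and follows the same basic route the paper intends, namely reading the two estimates off the matrix Nevanlinna representation \eqref{4.5} of $F(z)=(\mathbb{E}[(\Re b+z\Im b-X_N)^{-1}])^{-1}$ together with the identifications \eqref{4.6}--\eqref{4.7}, and then specializing $b=zI_N$. The paper's actual write-up differs from yours at two points, both minor. First, for the inequality $\Im\mathbb{E}[R_N(b)]^{-1}\ge\Im b$ the paper simply cites \cite[Remark 2.5]{BPV12}, whereas you extract it directly by evaluating the Nevanlinna representation at $z=i$ and using $\Im F(i)=B+\int(1+t^2)^{-1}\,d\rho(t)\ge B=\Im b$; this is more self-contained and, as a bonus, also gives the invertibility of $\mathbb{E}[R_N(b)]$ (which the paper states but does not argue separately) without the preliminary step you include about $\Im R_N(b)<0$. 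Second, for the bound on $\|\rho(\mathbb{R})\|$ the paper invokes, with $Z=(\Im b)^{-1}$, the exact second-order Weingarten identity
$$
\mathbb{E}[U_N^*B_NU_NZU_N^*B_NU_N]-\mathbb{E}[U_N^*B_NU_N]Z\,\mathbb{E}[U_N^*B_NU_N]
=(\mathrm{tr}_N(B_N^2)-\mathrm{tr}_N(B_N)^2)\Big(\tfrac{N^2}{N^2-1}\mathrm{tr}_N(Z)I_N-\tfrac{1}{N^2-1}Z\Big),
$$
while you first use the operator inequality $(\Im b)^{-1}\le\|(\Im b)^{-1}\|I_N$ to reduce to the case $Z=I_N$, which only requires $\mathbb{E}[(U_N^*B_NU_N)^2]=\mathrm{tr}_N(B_N^2)I_N$ and thus avoids the $1/(N^2-1)$ terms altogether. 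Both yield the (deliberately loose) constant $4$, so either formulation is fine; your variant is the slightly more elementary of the two.
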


\begin{proof}
The relation $\Im\mathbb{E}[R_{N}(b)]^{-1}\ge\Im b$ follows from \cite[Remark 2.5]{BPV12}.
The second inequality  follows immediately from the observations preceding the
lemma, and from the fact that for any deterministic matrix $Z$,
\begin{eqnarray*}
\lefteqn{\mathbb{E}[U_{N}^{*}B_{N}U_{N}ZU_{N}^{*}B_{N}U_{N}]-\mathbb{E}[U_{N}^{*}B_{N}U_{N}]Z\mathbb{E}[U_{N}^{*}B_{N}U_{N}]}\\
&=({\rm tr}_{N}(B_{N}^{2})-[{\rm tr}_{N}(B_{N})]^{2})\left(\frac{N^2}{N^2-1}\text{tr}_N(Z)I_N-
\frac{1}{N^2-1}Z\right).\qedhere\end{eqnarray*}
\end{proof}

In some situations it is convenient to see how $\mathbb E[(zI_N-X_N)^{-1}]$ depends on $A_N$; recall that $X_N=A_N+U_N^*B_NU_N$. This is achieved to some extent by the following lemma (see also \cite{Kargin11} ).
\begin{lem}\label{bicommutant} 
Fix a matrix $B_N\in M_{N}(\mathbb{C})$. Let $b\in M_N(\mathbb C)$ be such that
$b-U^{*}B_{N}U$ is invertible for every  $U\in{\rm U}(N)$,  consider the random matrix
$R(b)=(b-U_{N}^{*}B_{N}U_{N})^{-1}$ and its expected value $G(b)=\mathbb{E}[(b-U_{N}^{*}B_{N}U_{N})^{-1}]$.
Then{\rm:}
\begin{enumerate}
\item For every $Y\in M_{N}(\mathbb{C})$ we have 
\begin{equation}
G(b)Y-YG(b)=
\mathbb{E}[R(b)(Yb-bY)R(b)].\label{eq:first-commutation}
\end{equation}
If $G(b)$ is invertible, we also have
\begin{eqnarray}
\lefteqn{Y(G(b)^{-1}-b)-(G(b)^{-1}-b)Y=}\label{eq:second-commutation}\\
 & &G(b)^{-1}\mathbb{E}[(R(b)-G(b))(Yb-bY)(R(b)-G(b))]G(b)^{-1}.\nonumber
\end{eqnarray}

\item $G(b)\in\{b\}''$, where $\{b\}''$ denotes the double commutant of $b$
in $M_{N}(\mathbb{C})$.
\end{enumerate}
\end{lem}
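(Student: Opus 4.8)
The plan is to prove the two commutation identities in part (1) by direct algebraic manipulation of resolvents, and then derive part (2) as a consequence of \eqref{eq:first-commutation} applied to a suitable family of matrices $Y$. For \eqref{eq:first-commutation}, the key observation is the resolvent identity: writing $C = U_N^* B_N U_N$ and $R(b) = (b-C)^{-1}$, and using that $R(b)(b-C) = (b-C)R(b) = I_N$, one computes
\begin{equation*}
R(b)Y - YR(b) = R(b)\bigl[(b-C)Y - Y(b-C)\bigr]R(b) = R(b)(bY - Yb)R(b) - R(b)(CY-YC)R(b).
\end{equation*}
Now the crucial point is that the law of $C = U_N^* B_N U_N$ is invariant under $U_N \mapsto VU_N$ for any fixed $V \in {\rm U}(N)$; equivalently, $C$ and $V^*CV$ have the same distribution. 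First I would prove the identity in the special case $Y = V \in {\rm U}(N)$. Replacing $U_N$ by $VU_N$ inside the expectation (which leaves $\mathbb E$ unchanged) converts $R(b) = (b-C)^{-1}$ into $(b - V^*CV)^{-1} = V^*(VbV^* - C)^{-1}V$, and a short computation of $\mathbb E[R(b)V - VR(b)]$ shows that the term $\mathbb E[R(b)(CV-VC)R(b)]$ drops out by this change of variables, leaving exactly $\mathbb E[R(b)(bV - Vb)R(b)]$. Since every matrix $Y$ is a linear combination of unitaries, \eqref{eq:first-commutation} follows by linearity.

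For \eqref{eq:second-commutation}, assume $G(b)$ is invertible and set $H = G(b)^{-1} - b$. Then $YH - HY = Y G(b)^{-1} - G(b)^{-1}Y - (Yb - bY)$. Multiply the identity $G(b)Y - YG(b) = \mathbb E[R(b)(Yb-bY)R(b)]$ on both sides by $G(b)^{-1}$ to get $YG(b)^{-1} - G(b)^{-1}Y = -G(b)^{-1}\mathbb E[R(b)(Yb-bY)R(b)]G(b)^{-1}$. Substituting and using $\mathbb E[R(b)] = G(b)$, one checks that the cross terms combine so that
\begin{equation*}
YH - HY = G(b)^{-1}\,\mathbb E\bigl[(R(b)-G(b))(Yb-bY)(R(b)-G(b))\bigr]\,G(b)^{-1},
\end{equation*}
because expanding $(R(b)-G(b))(Yb-bY)(R(b)-G(b))$ produces $\mathbb E[R(b)(Yb-bY)R(b)]$ together with three terms each of which, after taking expectations and pre/post-multiplying by $G(b)^{-1}$, reconstructs $-(Yb-bY)$ and cancels appropriately. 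This is a routine but slightly delicate bookkeeping computation.

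For part (2), I would apply \eqref{eq:first-commutation} with $Y$ ranging over all matrices that commute with $b$: if $Yb = bY$, then the right-hand side vanishes, so $G(b)Y = YG(b)$. Thus $G(b)$ commutes with every element of $\{b\}'$, i.e. $G(b) \in \{b\}'' $. The main obstacle I anticipate is getting the change-of-variables step in \eqref{eq:first-commutation} cleanly right — specifically, verifying that after the substitution $U_N \mapsto VU_N$ the unwanted resolvent-commutator term $\mathbb E[R(b)(CV-VC)R(b)]$ genuinely cancels rather than merely being transformed; this requires care in tracking how $b$ conjugates under $V$. Everything else is standard resolvent algebra and linearity.
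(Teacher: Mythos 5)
Your derivation of \eqref{eq:second-commutation} from \eqref{eq:first-commutation} and your proof of part (2) are both correct and are essentially the paper's argument. The problem is the key step in your proof of \eqref{eq:first-commutation}. Setting aside two minor slips (the resolvent identity should read $R(b)Y-YR(b)=R(b)\bigl[Y(b-C)-(b-C)Y\bigr]R(b)$, and the substitution that sends $C$ to $V^*CV$ is $U_N\mapsto U_NV$, not $U_N\mapsto VU_N$), the serious gap is that the change of variables with a \emph{fixed} unitary $V$ does not make the term $\mathbb{E}[R(b)(CV-VC)R(b)]$ vanish. Carrying out the substitution $U_N\mapsto U_NV$ and using $(b-V^*CV)^{-1}=V^*(VbV^*-C)^{-1}V$ yields only the covariance relation
\begin{equation*}
\mathbb{E}\bigl[R(b)(CV-VC)R(b)\bigr]
=V^*\,\mathbb{E}\bigl[(VbV^*-C)^{-1}(CV-VC)(VbV^*-C)^{-1}\bigr]\,V,
\end{equation*}
in which the argument of the resolvents has changed from $b$ to $VbV^*$. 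This does not force the quantity to be zero unless $V$ already commutes with $b$, which defeats the purpose. The algebraic rearrangements that follow (writing $R(b)C=R(b)b-I$, etc.) only shuffle terms back into the identity one is trying to prove; they never produce the vanishing.

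What actually works, and what the paper does, is to use invariance \emph{infinitesimally}. By left invariance of Haar measure, the map $Y\mapsto\mathbb{E}\bigl[(b-e^{iY}U_N^*B_NU_Ne^{-iY})^{-1}\bigr]$ is constant on selfadjoint $Y$, hence (by analyticity and the fact that selfadjoint matrices are a uniqueness set) constant in a complex neighbourhood of the selfadjoints. Differentiating $\mathbb{E}\bigl[(b-e^{\varepsilon Y}U_N^*B_NU_Ne^{-\varepsilon Y})^{-1}\bigr]$ at $\varepsilon=0$ gives exactly $\mathbb{E}[R(b)(YC-CY)R(b)]=0$ for all $Y\in M_N(\mathbb{C})$, which is the missing cancellation. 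Your plan can be repaired by replacing the fixed-unitary change of variables with this one-parameter differentiation; then the decomposition you set up (with corrected signs) immediately gives \eqref{eq:first-commutation}, and the rest of your argument goes through.
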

\begin{rem}
The conclusion of item (2) of the above lemma applies to any complex differentiable map $f$ defined
on an open set in $M_N(\mathbb C)$ with the property that $f(V^*bV)=V^*f(b)V$ for all $V\in{\rm U}(N)$.
\end{rem}
\begin{proof}
The analytic function $H(Y)=\mathbb{E}[(b-e^{iY}U_{N}^{*}B_{N}U_{N}e^{-iY})^{-1}]$
is defined in an open set containing the selfadjoint matrices. Moreover,
left invariance of the Haar measure implies that $H$ is constant
on the selfadjoint matrices. Since the selfadjoint matrices form a
uniqueness set for analytic functions, we deduce that $H$ is constant
in a neighborhood of the selfadjoint matrices. In particular, given
$Y\in M_{N}(\mathbb{C})$, the function
\[
\mathbb{E}[(b-e^{\varepsilon Y}U_{N}^{*}B_{N}U_{N}e^{-\varepsilon Y})^{-1}]
\]
does not depend on $\varepsilon$ for small $\varepsilon\in\mathbb{C}$.
Differentiation at $\varepsilon=0$ yields the identity 
\[
\mathbb{E}[R(b)(U_{N}^{*}B_{N}U_{N}Y-YU_{N}^{*}B_{N}U_{N})R(b)]=0.
\]
Using now the fact that $R(b)U_{N}^{*}B_{N}U_{N}=-I_N+R(b)b$ and $U_{N}^{*}B_{N}U_{N}R(b)=-I_N+bR(b)$, we obtain
\[
\mathbb{E}[-YR(b)+R(b)bYR(b)+R(b)Y-R(b)YbR(b)]=0
\]
which is equivalent to \eqref{eq:first-commutation} because $\mathbb{E}[R(b)Y]=G(b)Y$
and $\mathbb{E}[YR(b)]=YG(b)$.

To prove the second identity in (1), observe that
\[
\mathbb{E}[R(b)(Yb-bY)R(b)]=\mathbb{E}[(R(b)-G(b))(Yb-bY)(R(b)-G(b))]+G(b)(Yb-bY)G(b)
\]
so that \eqref{eq:first-commutation} implies
\[
G(b)Y-YG(b)-G(b)(Yb-bY)G(b)=\mathbb{E}[(R(b)-G(b))(Yb-bY)(R(b)-G(b))].
\]
Relation \eqref{eq:second-commutation}  is now obtained multiplying
this relation by $G(b)^{-1}$ on both sides.

To verify (2), we need to show that $G(b)$ commutes with any matrix
$Y\in\{b\}'$. This follows immediately from (\ref{eq:first-commutation}).
\end{proof}

The preceding lemma shows that $G(b)$ must be of the form $u(b)$ for some rational function 
$u$ of a complex variable, and \eqref{eq:second-commutation} allows us to show that in
fact $G(b)^{-1}$ is close to a matrix of the form $b+wI_{N}$ when the
variance of $R(b)$ is small. This follows from the next result.
\begin{lem}\label{lem:W(T)}
Assume that $\varepsilon>0$, and $T\in M_{N}(\mathbb{C})$
satisfies the inequality
\[
|k^{*}(TY-YT)h|\le\varepsilon\|Y\|
\]
for every rank one matrix $Y\in M_{N}(\mathbb{C})$ and all unit vectors
$h,k\in\mathbb{C}^{N}$. Then for any $w$ in the numerical range 
$W(T)=\{h^{*}Th:\|h\|=1\}$, we have $\|T-wI_{N}\|\le2\varepsilon$.
\end{lem}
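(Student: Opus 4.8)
The plan is to reduce the claim to a statement about the off-diagonal behaviour of $T$ in an arbitrary orthonormal basis, by making a judicious choice of rank-one matrices $Y$ in the hypothesis. Fix a unit vector $h$ and set $w=h^*Th\in W(T)$; I want to show $\|Th-wh\|\le 2\varepsilon$ and, by the same argument applied with $h$ replaced by an arbitrary unit vector and $w$ held fixed only through the first vector, also control $\|T^*h-\overline{w}h\|$, which together give the operator norm bound. Concretely, write $Th=wh+\zeta$ where $\zeta\perp h$ (this is the orthogonal decomposition relative to $h$); the goal is $\|\zeta\|\le 2\varepsilon$, but actually I will get $\|\zeta\|\le\varepsilon$ from one application and combine with the adjoint estimate.

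First I would take $Y=kh^*$ for a unit vector $k$, so $Y$ has rank one and $\|Y\|=1$. Then $TY-YT=Tk h^*-k h^*T$, and $k^*(TY-YT)h=(k^*Tk)(h^*h)-(k^*k)(h^*Th)=k^*Tk-w$; the hypothesis gives $|k^*Tk-w|\le\varepsilon$ for every unit vector $k$, i.e. the numerical range $W(T)$ has diameter at most $2\varepsilon$ and in particular lies in the disc of radius $\varepsilon$ about $w$. Next, to pass from numerical-range control to norm control, I would use the rank-one matrices $Y=k h^*$ but evaluate the sesquilinear form against a different pair of vectors: compute $h^*(TY-YT)g$ for suitable $g$. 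Choosing $Y=\zeta^* $-normalized, specifically $Y=\frac{1}{\|\zeta\|}h\zeta^*$ (rank one, norm one, valid since $\zeta\ne 0$ WLOG), one gets $TY-YT=\frac1{\|\zeta\|}(Th\,\zeta^*-h\,\zeta^*T)$, and pairing on the left with a unit vector $k$ and on the right with $h$ yields an expression of the form $\frac1{\|\zeta\|}\big((k^*Th)(\zeta^*h)-(k^*h)(\zeta^*Th)\big)$; since $\zeta\perp h$ this is $-\frac1{\|\zeta\|}(k^*h)(\zeta^*Th)$, and taking $k=h$ gives $-\frac1{\|\zeta\|}\zeta^*Th=-\frac1{\|\zeta\|}\zeta^*(wh+\zeta)=-\|\zeta\|$. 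Hence $\|\zeta\|\le\varepsilon$.

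The remaining step is to upgrade $\|Th-wh\|\le\varepsilon$ for every unit vector $h$ (with $w=w(h)=h^*Th$) to the uniform bound $\|T-wI_N\|\le 2\varepsilon$ for a single $w$. Here I fix one unit vector $h_0$, set $w=h_0^*Th_0$, and for an arbitrary unit vector $g$ write $Tg=(g^*Tg)g+\eta_g$ with $\|\eta_g\|\le\varepsilon$ and $|g^*Tg-w|\le\varepsilon$ from the numerical-range estimate; then $\|Tg-wg\|\le\|Tg-(g^*Tg)g\|+|g^*Tg-w|\le 2\varepsilon$. Taking the supremum over unit $g$ gives $\|T-wI_N\|\le 2\varepsilon$, and one checks $w\in W(T)$ by construction. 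The main obstacle is the middle step — extracting a genuine norm (not merely numerical-range) estimate from a hypothesis phrased only through scalars $k^*(TY-YT)h$; the trick is that allowing $Y$ itself to depend on $T$ (through the vector $\zeta=Th-wh$) lets one manufacture exactly the inner product $\zeta^*Th$ that measures $\|Th-wh\|^2$, and everything else is bookkeeping with rank-one matrices of norm one.
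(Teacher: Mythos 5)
Your argument is correct, and it takes a genuinely different route from the paper's in the key step. The paper applies the hypothesis only with $Y=kh^{*}$, deduces that $W(T)$ has diameter at most $\varepsilon$, and then invokes an external fact --- the operator norm is at most twice the numerical radius --- to pass to $\|T-wI_{N}\|\le2\varepsilon$. You reproduce the first step, but instead of importing the numerical-radius inequality, you extract a direct vector-norm estimate $\|Th-(h^{*}Th)h\|\le\varepsilon$ from the hypothesis by testing against a second rank-one matrix $Y=h\zeta^{*}/\|\zeta\|$ with $\zeta=Th-(h^{*}Th)h\perp h$; pairing on both sides with $h$ isolates exactly $-\|\zeta\|$, and then the triangle inequality
$\|Tg-wg\|\le\|Tg-(g^{*}Tg)g\|+|g^{*}Tg-w|\le2\varepsilon$
together with $\|T-wI_{N}\|=\sup_{\|g\|=1}\|Tg-wg\|$ finishes. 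What this buys you is a self-contained proof with no input from numerical range theory; it also makes transparent that the $2\varepsilon$ splits as one $\varepsilon$ for the off-diagonal component $Tg-(g^{*}Tg)g$ and one $\varepsilon$ for the drift of $g^{*}Tg$ from the fixed $w$. The only (harmless) blemish is in your opening paragraph, which announces an adjoint estimate on $\|T^{*}h-\overline{w}h\|$ that the argument you then carry out neither needs nor uses, since $\sup_{g}\|Tg-wg\|$ already equals $\|T-wI_{N}\|$.
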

\begin{proof}
Given two unit (column) vectors $h,k\in\mathbb{C}^{N}$, consider
the---necessarily rank one---matrix $Y=kh^{*}\in M_{N}(\mathbb{C})$. The hypothesis implies
that
\[
|k^{*}Tk-h^{*}Th|=|k^{*}(TY-YT)h|\le\varepsilon.
\]
We deduce that the numerical range $W(T)=\{h^{*}Th:\|h\|=1\}$ has diameter at most 
$\varepsilon$, and therefore there $W(T-wI_{N})\subset\{\lambda\in\mathbb{C}\colon|\lambda|
\le\varepsilon\}$ for any $w\in W(T)$. Thus any $w\in W(T)$ satisfies the conclusion because the 
norm of an operator is at most twice its numerical radius (see \cite[Theorem 1.3-1]{numran}).
\end{proof}

A further property of eigenvectors of Hermitian matrices which are close in norm to each other
appears in the analysis of the behaviour of the eigenvectors of our matrix models. The 
following lemma appears already in \cite{Capitaine11}; we offer a proof for the reader's convenience. 
\begin{lem}\label{eigenspaces}
Let $X$ and $X_0$  be  Hermitian $N\times N$ matrices.  Assume that $\alpha,\beta,\delta\in\mathbb R$ are such that
$\alpha<\beta$, $\delta>0$, and neither $X$  nor $X_0$ has any eigenvalues in $[\alpha-\delta,\alpha]\cup
[\beta,\beta+\delta]$. Then, $$
\|E_{X}((\alpha,\beta))-E_{X_0}((\alpha,\beta))\|<\frac{4(\beta-\alpha+2\delta)}{\pi\delta^2}\|X-X_0\|.
$$
In particular, for any unit vector $\xi\in E_{X_0}((\alpha,\beta))(\mathbb C^N)$,
$$
\|(I_N-E_{X}((\alpha,\beta)))\xi\|_2<\frac{4(\beta-\alpha+2\delta)}{\pi\delta^2}\|X-X_0\|.
$$
\end{lem}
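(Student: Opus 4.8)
The plan is to write both spectral projections as Riesz contour integrals of the corresponding resolvents over one and the same contour $\Gamma$, chosen to run through the middle of the two spectral gaps, and then to estimate the difference of the two integrands by means of the resolvent identity.

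First I would take $\Gamma$ to be the positively oriented boundary of the rectangle $[\alpha-\delta/2,\beta+\delta/2]\times[-\delta/2,\delta/2]\subset\mathbb C$. Because neither $X$ nor $X_0$ has an eigenvalue in $[\alpha-\delta,\alpha]\cup[\beta,\beta+\delta]$, for each of these Hermitian matrices the eigenvalues strictly inside $\Gamma$ are exactly those in $(\alpha,\beta)$, all others lie outside $\Gamma$, and $\Gamma$ meets neither spectrum; the holomorphic functional calculus then yields
\[
E_X((\alpha,\beta))=\frac1{2\pi i}\oint_\Gamma(zI_N-X)^{-1}\,dz,\qquad
E_{X_0}((\alpha,\beta))=\frac1{2\pi i}\oint_\Gamma(zI_N-X_0)^{-1}\,dz .
\]
Next I would recall that for a Hermitian $Y$ one has $\|(zI_N-Y)^{-1}\|=1/\operatorname{dist}(z,\sigma(Y))$, and that on $\Gamma$ this distance is at least $\delta/2$ for both $Y=X$ and $Y=X_0$: on the horizontal sides $|\Im z|=\delta/2$, while on the vertical sides $\Re z\in\{\alpha-\delta/2,\beta+\delta/2\}$ is at distance exceeding $\delta/2$ from every eigenvalue of $X$ and of $X_0$, by the gap hypothesis (an eigenvalue $\lambda\notin[\alpha-\delta,\alpha]$ forces $|\alpha-\delta/2-\lambda|>\delta/2$, and similarly at $\beta$). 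Hence $\|(zI_N-X)^{-1}\|,\|(zI_N-X_0)^{-1}\|\le 2/\delta$ on all of $\Gamma$.

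I would then subtract the two integral representations, insert the resolvent identity $(zI_N-X)^{-1}-(zI_N-X_0)^{-1}=(zI_N-X)^{-1}(X-X_0)(zI_N-X_0)^{-1}$, and bound the integrand in norm by $(2/\delta)^2\|X-X_0\|$; since the length of $\Gamma$ is $2(\beta-\alpha+\delta)+2\delta=2(\beta-\alpha+2\delta)$, this gives
\[
\|E_X((\alpha,\beta))-E_{X_0}((\alpha,\beta))\|\le\frac1{2\pi}\cdot2(\beta-\alpha+2\delta)\cdot\frac4{\delta^2}\cdot\|X-X_0\|=\frac{4(\beta-\alpha+2\delta)}{\pi\delta^2}\|X-X_0\|,
\]
which is the asserted bound (the inequality being in fact strict when $X\ne X_0$, since the resolvent estimate $2/\delta$ is strict along the positive-length vertical portion of $\Gamma$). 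For the last statement, if $\xi$ is a unit vector in $E_{X_0}((\alpha,\beta))(\mathbb C^N)$ then $E_{X_0}((\alpha,\beta))\xi=\xi$, so $(I_N-E_X((\alpha,\beta)))\xi=\bigl(E_{X_0}((\alpha,\beta))-E_X((\alpha,\beta))\bigr)\xi$, and the first part applied with $\|\xi\|=1$ concludes the proof. The one point that really needs care --- the crux of the matter --- is the placement of $\Gamma$: it must sit exactly in the middle of both gaps so that a single contour delivers the uniform resolvent bound $2/\delta$ for $X$ and $X_0$ simultaneously; everything else is a routine contour estimate.
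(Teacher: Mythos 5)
Your proof is correct and coincides with the paper's own argument: the same rectangular contour $\Gamma$ (with corners $\alpha-\delta/2\pm i\delta/2$ and $\beta+\delta/2\pm i\delta/2$), the same Riesz-projection representation, the same application of the resolvent identity, and the same $2/\delta$ bound on both resolvents leading to the factor $4(\beta-\alpha+2\delta)/(\pi\delta^2)$. Your remark that strictness requires $X\neq X_0$ is in fact slightly more careful than the paper, which states the strict inequality unconditionally.
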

\begin{proof}
Consider  the rectangle $\gamma$ having as corners the complex points 
$\alpha- (1\pm i)\delta/2$ and $\beta+(1\pm i)\delta/2$. By assumptions, we have  $\sigma(X)\cap([\alpha-\delta,\alpha]\cup
[\beta,\beta+\delta])=\varnothing$ and  $\sigma(X_0)\cap([\alpha-\delta,\alpha]\cup
[\beta,\beta+\delta])=\varnothing$. Thus the spectral projections can be obtained by analytic functional calculus:
$$
E_{X}((\alpha,\beta))-E_{X_0}((\alpha,\beta))=\frac{1}{2\pi i}\int_\gamma\left[
(\lambda-X)^{-1}-(\lambda-X_0)^{-1}\right]\,d\lambda.
$$
An application of the resolvent equation and elementary norm estimates yield
\begin{eqnarray*}
\|E_{X}((\alpha,\beta))-E_{X_0}((\alpha,\beta))\|&=&\frac{1}{2\pi}\left\|\int_\gamma
(\lambda-X)^{-1}(X_0-X)(\lambda-X_0)^{-1}\,d\lambda\right\|\\
&\leq& \frac{1}{2\pi}\int_\gamma\|(\lambda-X)^{-1}(X_0-X)(\lambda-X_0)^{-1}\|\,
d\lambda\\
&\leq&(\beta-\alpha+2\delta)\\
& & \mbox{}\times\frac{\|X-X_0\|}{\pi}\sup_{\lambda\in\gamma}\frac{1}{\|\lambda-X\|}
\sup_{\lambda\in\gamma}\frac{1}{\|\lambda-X_0\|}\\
&<& \frac{4(\beta-\alpha+2\delta)}{\pi\delta^2}\|X-X_0\|.
\end{eqnarray*}
The lemma follows.
\end{proof}

For the following concentration of measure result it is convenient to identify $\mathbb{C}^N$ with the 
subspace of $\mathbb{C}^{N+1}$ consisting of all vectors whose last component is zero.  Similarly, 
$M_N(\mathbb{C})$ is identified with those matrices in $M_{N+1}(\mathbb{C})$ whose last column and 
row are zero.  We use the notation $\mathbb V$ for variance.

\begin{lem}\label{concentration}
Fix a positive integer $r$, a projection $P$ of rank $r$, and a scalar $z\in\mathbb{C}\setminus\mathbb{R}$. Then{\rm:}
\begin{enumerate}
\item [(i)] $\lim_{N\to\infty}(PR_{N}(z)\,P^*-P\mathbb{E}[R_{N}(z)]\,P^*)=0$
almost surely. 
\item [(ii)]Given unit vectors $h,k\in\mathbb{C}^{N}$, $\mathbb{V}(k^{*}R_{N}(z)h)\leq C/[N|\Im z|^{4}]$.
\end{enumerate}
\end{lem}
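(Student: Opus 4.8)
The plan is to establish both assertions via standard concentration-of-measure machinery on the unitary group, exploiting the fact that $U\mapsto k^*R_N(z)h$ and $U\mapsto PR_N(z)P^*$ are Lipschitz functions of $U_N$ with controlled Lipschitz constant. First I would recall that the Haar measure on $\mathrm{U}(N)$ satisfies a concentration inequality: there is a universal constant $c>0$ such that for every $L$-Lipschitz function $f\colon\mathrm{U}(N)\to\mathbb C$ (with respect to the Hilbert--Schmidt metric), one has $\mathbb P(|f(U_N)-\mathbb E[f(U_N)]|>t)\le C\exp(-cNt^2/L^2)$; this is the Gromov--Milman / Maurey concentration on the unitary group, applied to real and imaginary parts separately. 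The embedding of $\mathbb C^N\subset\mathbb C^{N+1}$ and $M_N(\mathbb C)\subset M_{N+1}(\mathbb C)$ is purely a bookkeeping device so that the rank-$r$ projection $P$ and the vectors $h,k$ can be taken fixed (independent of $N$) while $U_N$ varies; the function $U\mapsto R_N(z)=(zI_N-A_N-U^*B_NU)^{-1}$ extends in the obvious way.

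\textbf{Step 1: Lipschitz estimates.} For fixed $z$ with $\Im z\ne 0$, I would compute the derivative of $U\mapsto R_N(z)$ along a tangent direction at $U$. Writing $X_N(U)=A_N+U^*B_NU$ and differentiating, the resolvent identity gives that the differential of $R_N(z)$ applied to a perturbation of $U$ of Hilbert--Schmidt norm $1$ is bounded in operator norm by a constant times $\|R_N(z)\|^2\,\|B_N\|$, hence by $C\|B_N\|/|\Im z|^2$ (using $\|R_N(z)\|\le 1/|\Im z|$). Consequently $U\mapsto k^*R_N(z)h$ is Lipschitz with constant $L\le C\|B_N\|/|\Im z|^2$, and since $\|B_N\|$ is uniformly bounded (by hypothesis on our models), $L=O(1/|\Im z|^2)$. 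For a scalar entry one gains nothing from $N$, but for quantities like $\mathrm{tr}_N(\,\cdot\,)$ one would; here we only need the entrywise bound.

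\textbf{Step 2: Variance bound (ii).} Applying the concentration inequality to $f(U)=k^*R_N(z)h$ (separately to $\Re f$ and $\Im f$, each with the same Lipschitz constant $L$), integrating the tail bound $\mathbb P(|f-\mathbb E f|>t)\le C e^{-cNt^2/L^2}$ yields $\mathbb V(f(U_N))=\int_0^\infty 2t\,\mathbb P(|f-\mathbb E f|>t)\,dt\le C L^2/N\le C'/(N|\Im z|^4)$, which is exactly assertion (ii) (absorbing the uniform bound on $\|B_N\|$ into the constant).

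\textbf{Step 3: Almost sure convergence (i).} For each pair of basis vectors $e_i,e_j$ spanning the range of $P$ (at most $r^2$ such pairs, a number independent of $N$), part (ii) gives $\mathbb V(e_i^*R_N(z)e_j)\le C/(N|\Im z|^4)$. Combining the concentration tail bound with Borel--Cantelli along, say, $N\mapsto N^2$ and then interpolating, or more simply observing $\sum_N \mathbb P(|e_i^*R_N(z)e_j-\mathbb E[e_i^*R_N(z)e_j]|>\varepsilon)\le\sum_N C e^{-c'N\varepsilon^2|\Im z|^4}<\infty$, one gets $e_i^*(R_N(z)-\mathbb E[R_N(z)])e_j\to 0$ almost surely for each fixed $(i,j)$. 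Since there are only finitely many entries and $P R_N(z)P^*-P\mathbb E[R_N(z)]P^*$ is the $r\times r$ matrix of these differences, its norm tends to $0$ almost surely, which is assertion (i). \textbf{The main obstacle} is simply the clean verification of the Lipschitz constant in Step 1 — making sure the differential of $U\mapsto U^*B_NU$ contributes only $O(\|B_N\|)$ and that the resolvent bound $\|R_N(z)\|\le |\Im z|^{-1}$ propagates correctly through the composition; everything after that is a routine application of concentration on $\mathrm{U}(N)$ together with Borel--Cantelli.
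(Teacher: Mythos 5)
Your proposal follows essentially the same route as the paper: it reduces (i) to the scalar statement $k^*(R_N(z)-\mathbb{E}[R_N(z)])h\to 0$, derives the Lipschitz constant $O(\|B_N\|/|\Im z|^2)$ for $U\mapsto k^*R_N(z)h$ by differentiating the resolvent, invokes concentration of measure on $\mathrm{U}(N)$ (the paper cites \cite[Corollary 4.4.28]{AGZ10}, you cite Gromov--Milman, which is the same tool), integrates the tail for (ii), and uses Borel--Cantelli for (i). The only difference is cosmetic: you spell out the Lipschitz computation that the paper merely asserts, and your simpler fixed-$\varepsilon$ Borel--Cantelli argument is valid and slightly cleaner than the paper's rate-quantified version with $\varepsilon/N^{1/2-\alpha}$.
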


\begin{proof}
Assertion  
(i) is equivalent to the statement that, given unit vectors $h,k\in\mathbb{C}^{N}$
\begin{equation}
\lim_{N\to\infty}k^{*}(R_{N}(z)-\mathbb{E}[R_{N}(z)])h=0\label{eq:55}
\end{equation}
 almost surely. The random variable $k^{*}R_{N}(z)h$
is a  Lipschitz function on the unitary group ${\rm U}({N})$ with Lipschitz
constant ${C}/{|\Im z|^{2}}$. An application of \cite[Corollary 4.4.28]{AGZ10}
yields the inequality 
\[
\mathbb{P}\left(\vert k^{*}\left(R_{N}(z)-\mathbb{E}[R_{N}(z)]\right)h\vert>\frac{\varepsilon}{N^{\frac{1}{2}-\alpha}}\right)\leq2\exp\left(-CN^{2\alpha}\vert\Im z\vert^{4}\varepsilon^{2}\right).
\]
for any $\alpha\in(0,1/2)$, and \eqref{eq:55} follows by an application
of the Borel-Cantelli lemma. To prove (ii), apply the same inequality in the usual formula
$\mathbb{E}[X]=\int_{0}^{+\infty}\mathbb{P}(X>t)\, dt$ for a positive
random variable $X$.
\end{proof}

In the following result the coefficient $t^{4}$
can be replaced by $t^{2}$ if we estimate the operator norm of a matrix by its Hilbert-Schmidt norm.

\begin{cor}
\label{cor:variance-estimate}Fix a positive integer $t$, matrices $Y,Z$ of rank at most $t$, and a scalar 
$z\in\mathbb{C}\setminus\mathbb{R}$. Then{\rm:}
\[
\mathbb{E}[\|Y(R_{N}(z)-\mathbb{E}[R_{N}(z)])Z\|^{2}]\le Ct^{4}\|Y\|^{2}\|Z\|^{2}/[N|\Im z|^{4}].
\]
\end{cor}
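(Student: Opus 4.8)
The plan is to deduce Corollary~\ref{cor:variance-estimate} from the variance estimate in Lemma~\ref{concentration}(ii) by reducing the operator-norm (or Hilbert--Schmidt-norm) bound on $Y(R_N(z)-\mathbb E[R_N(z)])Z$ to a bound on the individual matrix entries of the centered resolvent in a well-chosen basis adapted to the ranges of $Y$ and $Z$. First I would write $Y = Y P_1$ and $Z = P_2 Z$, where $P_1 = P_{\operatorname{ran} Y^*}$ and $P_2 = P_{\operatorname{ran} Z}$ are orthogonal projections of rank at most $t$; then
$$
Y(R_N(z)-\mathbb E[R_N(z)])Z = Y\bigl(P_1 (R_N(z)-\mathbb E[R_N(z)]) P_2\bigr) Z,
$$
so that $\|Y(R_N(z)-\mathbb E[R_N(z)])Z\| \le \|Y\|\,\|Z\|\,\|P_1 (R_N(z)-\mathbb E[R_N(z)]) P_2\|$. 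It therefore suffices to estimate $\mathbb E[\|P_1 M_N(z) P_2\|^2]$ where $M_N(z) = R_N(z)-\mathbb E[R_N(z)]$.

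Next I would pick orthonormal bases $\{h_1,\dots,h_a\}$ of $\operatorname{ran} P_2$ and $\{k_1,\dots,k_b\}$ of $\operatorname{ran} P_1$ with $a,b \le t$, and note that in these bases the compression $P_1 M_N(z) P_2$ is the $b\times a$ matrix with entries $k_i^* M_N(z) h_j$. For the operator-norm statement I would bound the operator norm of this small matrix crudely by the sum (or the square root of the sum of squares) of its entries: $\|P_1 M_N(z) P_2\|^2 \le \bigl(\sum_{i,j}|k_i^* M_N(z) h_j|\bigr)^2 \le t^2 \sum_{i,j}|k_i^* M_N(z) h_j|^2$, using Cauchy--Schwarz on the $\le t^2$ terms. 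Taking expectations and applying Lemma~\ref{concentration}(ii) to each term, $\mathbb E[|k_i^* M_N(z) h_j|^2] = \mathbb V(k_i^* R_N(z) h_j) \le C/[N|\Im z|^4]$, gives
$$
\mathbb E[\|P_1 M_N(z) P_2\|^2] \le t^2 \cdot t^2 \cdot \frac{C}{N|\Im z|^4} = \frac{Ct^4}{N|\Im z|^4},
$$
which combined with the first reduction yields the claimed bound. For the Hilbert--Schmidt variant, one instead bounds the operator norm of $P_1M_N(z)P_2$ by its Hilbert--Schmidt norm directly, $\|P_1M_N(z)P_2\|^2 \le \|P_1M_N(z)P_2\|_{HS}^2 = \sum_{i,j}|k_i^*M_N(z)h_j|^2$, eliminating one factor of $t^2$ and producing $t^2$ in place of $t^4$, as remarked.

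There is essentially no obstacle here; the corollary is a routine consequence of Lemma~\ref{concentration}(ii). The only mild care needed is the counting of how many factors of $t$ appear, and making sure the constant $C$ is updated to absorb the $t$-independent constant from Lemma~\ref{concentration}(ii) together with the numerical factors; the distinction between operator norm and Hilbert--Schmidt norm accounts for the $t^4$ versus $t^2$ dichotomy flagged in the statement. The identification of $\mathbb C^N$ (resp. $M_N(\mathbb C)$) inside $\mathbb C^{N+1}$ (resp. $M_{N+1}(\mathbb C)$) noted before Lemma~\ref{concentration} guarantees that $P_1,P_2$ and the unit vectors $h_j,k_i$ are legitimately ambient objects, so no index-dimension bookkeeping is required.
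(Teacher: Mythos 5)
Your proof is correct and follows essentially the same route as the paper's: choose orthonormal vectors spanning the range of $Z$ and the range of $Y^*$, bound $\|Y(R_N-\mathbb{E}[R_N])Z\|$ by $\|Y\|\|Z\|\sum_{i,j}|k_j^*(R_N-\mathbb{E}[R_N])h_i|$, square, apply Cauchy--Schwarz over the $\le t^2$ terms, and invoke Lemma~\ref{concentration}(ii) termwise. The explicit factoring through the orthogonal projections $P_1$, $P_2$ and the Hilbert--Schmidt aside are presentational refinements of the same argument.
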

\begin{proof}
Choose orthonormal vectors $h_{1},\dots,h_{t}$ whose span contains
the range of $Z$ and orthonormal vectors $k_{1},\dots,k_{t}$ whose
span contains the range of $Y^{*}$. The corollary follows from the
inequality
\[
\|Y(R_{N}(z)-\mathbb{E}[R_{N}(z)])Z\|\le\sum_{i,j=1}^{t}\|Y\|\|Z\|\vert k_{j}^{*}\left(R_{N}(z)-\mathbb{E}[R_{N}(z)]\right)h_{i}\vert
\]
and part (ii) of the preceding lemma.
\end{proof}

\begin{rem}\label{4.13}
We note for further use that Lemma \ref{concentration} and Corollary \ref{cor:variance-estimate}
apply to the resolvent of any selfadjoint polynomial in $m+1$ noncommuting variables 
$P(A_N^{(1)},\dots,A_N^{(m)},U_N^*B_NU_N)$
as long as the norms of $A_N^{(j)}$ and $B_N$ are uniformly bounded in $N$.
\end{rem}

\section{Proofs of the main results}
The three subsections below provide a parallel treatment of the three models under consideration.

\subsection{The additive model $X_N=A_N+U_N^*B_NU_N$\label{PLUS}}
We use the notation from Subsection \ref{subsec:+perturb}.
Fix $\alpha\in\text{supp}(\mu)$
and $\beta\in\text{supp}(\nu)$.
Due to the left and right invariance of the Haar measure on ${\rm U}(N)$ we may, and do, assume without loss of 
generality that both $A_{N}$ and $B_{N}$ are diagonal matrices. More precisely, we let $A_N$ be the diagonal matrix
$$
A_N=\text{ Diag}
(\theta_1,\dots,\theta_p,\alpha_1^{(N)},\dots,\alpha_{N-p}^{(N)}),
$$
where  $\alpha_1^{(N)}\ge\cdots\ge\alpha_{N-p}^{(N)}$.
We also have $\theta_1\ge\cdots\ge\theta_p$, but no order relation is assumed between $\theta_i$ and $\alpha_j^{(N)}$ other than $\theta_i\ne\alpha_j^{(N)}$.
For $N\ge p$, we write  $A_{N}=A_{N}'+A_{N}'',$ 
where 
\[
A_{N}'=\text{Diag}(\underbrace{\alpha,\ldots,\alpha}_p,\alpha_{1}^{(N)},\ldots,\alpha_{N-p}^{(N)}),
\]
 and
\[
A_{N}''= {\text{Diag}(\theta_{1}-\alpha,\dots\theta_{p}-\alpha,\underbrace{0,\ldots,0}_{N-p})}.
\]
We have
$A_{N}''=P_N^*\Theta P_N$, where $P_N$ is the $p\times N$ matrix representing the usual projection 
$\mathbb{C}^N\to\mathbb{C}^p$ onto the first $p$ coordinates, and 
\[
\Theta= {\text{Diag}(\theta_{1}-\alpha,\dots,\theta_{p}-\alpha)}.
\]
The operator $P_N$ is precisely $E_{A_N}(\{\theta_1,\dots,\theta_p\})$ corestricted to its range.
Similarly, $B_{N}=B_{N}'+B_{N}'',$ where 
\[
B_{N}'=\text{Diag}(\underbrace{\beta,\ldots,\beta}_q,\beta_{1}^{(N)},\ldots,\beta_{N-q}^{(N)}),
\]

\[
B_{N}''=\text{Diag} ({\tau_{1}-\beta,\dots,\tau_{q}-\beta,\underbrace{0,\ldots,0}_{N-q})}.=Q_N^*TQ_N,
\]
\[
{T}={\text{Diag}(\tau_{1}-\beta,\dots,\tau_{q}-\beta)},
\]
and $Q_N$ is the $q\times N$ matrix representing the usual projection $\mathbb{C}^N\to\mathbb{C}^q$.

\subsubsection{Reduction to the almost sure convergence of a $p\times p$ matrix\label{551}}
Here we explain how to reduce, in the spirit of \cite{BGRao09},
the problem of locating outliers of $A_{N}+U_{N}^{*}B_{N}'U_{N}$ to a  convergence problem
for a random matrix of fixed size $p\times p$. The matrices $A_{N}'$ and $B_{N}'$ have no spikes, 
and therefore \cite[Corollary 3.1]{ColMal11}
applies to the matrix $X'_N=A_{N}'+U_{N}^{*}B_{N}'U_{N}$. Recall that $K=\text{supp}(\mu\boxplus\nu)$.
We reformulate this result as follows: there exist positive random variables $(\delta_N)_{N\in\mathbb{N}}$, such that
\[
\sigma(A_{N}'+U_{N}^{*}B_{N}'U_{N})\subseteq K_{\delta_{N}},\quad N\in\mathbb{N},
\] 
and $\lim_{N\to\infty}\delta_N=0$ almost surely.
Given $z\in\mathbb{C}\setminus K_{\delta_{N}}$, we have
\[
zI_{N}-(A_{N}+U_{N}^{*}B_{N}'U_{N})=zI_{N}-X'_{N}-A_{N}''=(zI_{N}-X'_{N})(I_N-(zI_{N}-X'_{N})^{-1}A''_{N}),
\]
and therefore 
\[
\det(zI_{N}-(A_{N}+U_{N}^{*}B_{N}'U_{N}))=\det(zI_{N}-X_{N}')\det(I_{N}-\left(zI_{N}-X_{N}'\right)^{-1}\,P_N^*\Theta P_N).
\]
 Using the fact that $\det(I-XY)=\det(I-YX)$ when  $XY$
and $YX$ are square, we obtain $\det(I_{N}-\left(zI_{N}-X_{N}'\right)^{-1}\,P_N^*\Theta P_N)
=\det(I_{p}-P_N\left(zI_{N}-X_{N}'\right)^{-1}\,P^*_N\Theta),$ so
\[
\det(zI_{N}-(A_{N}+U_{N}^{*}B_{N}'U_{N}))=\det(zI_{N}-X_{N}')\det(I_{p}-P_N\left(zI_{N}-X_{N}'\right)^{-1}\,P_N^*\Theta).
\]
 We conclude that  the eigenvalues of $A_{N}+U_{N}^{*}B_{N}'U_{N}$
outside $K_{\delta_{N}}$ are precisely the zeros of the function $\det(F_N(z))$, where
\begin{equation}
F_{N}(z)=I_{p}-P_N\left(zI_{N}-X_{N}'\right)^{-1}\,P_N^*\Theta\label{MN}
\end{equation}
 in that open set. This is a random analytic function defined on $\overline{\mathbb{C}}\setminus K_{\delta_{N}}$,
with values in $M_{p}(\mathbb{C})$. We argue next that the sequence  $\{F_N(z)\}_N$ converges almost 
surely to the deterministic diagonal matrix function
\[
F(z)=\mathrm{Diag}\left(1-\frac{\theta_{1}-\alpha}{\omega_1(z)-\alpha},
\dots,1-\frac{\theta_{p}-\alpha}{\omega_1(z)-\alpha}\right),
\]
where $\omega_1$ is the subordination function from (\ref{subord1}).

\subsubsection{Convergence of $F_{N}$ }
We begin with a somewhat more general result. 
\begin{prop}
\label{estimationinmean}
Fix a positive integer $p$, and let $C_N$ and $D_N$ be deterministic real diagonal $N \times N$ matrices 
whose norms are uniformly bounded and such that the limits
$$\eta_i=\lim_{N\to\infty}(C_N)_{ii}$$
exist for $i=1,2,\dots,p. $ Suppose
that the empirical eigenvalue distributions of $C_N$ and $D_N$ converge weakly to $\mu$ and $\nu$, respectively.
Then the resolvent $${R}_N(z)=(zI_N-C_N-U_N^*D_NU_N)^{-1},\quad z\in\mathbb{C}\setminus\mathbb{R},$$ satisfies
\begin{equation}
\lim_{N\to\infty}P_N\mathbb{E}[{R}_N(z)]P_N^*=\mathrm{Diag}\left(
\frac{1}{\omega_1(z)-\displaystyle\eta_{1}},\ldots,
\frac{1}{\displaystyle\omega_1(z)-\eta_{p}}\right),\label{secondapproximation}
\end{equation}
where $\omega_1$ is the subordination function from \eqref{subord1}.
\end{prop}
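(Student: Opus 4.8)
The plan is to prove the stronger statement that $S_N(z):=\mathbb E[R_N(z)]^{-1}+C_N$ differs from a scalar matrix $w_N(z)I_N$ by a quantity that vanishes in operator norm, and then to show $\lim_{N}w_N(z)=\omega_1(z)$; the proposition follows at once, since $P_N(w_N(z)I_N-C_N)^{-1}P_N^{*}$ is the diagonal $p\times p$ matrix with entries $1/(w_N(z)-(C_N)_{ii})$ and $(C_N)_{ii}\to\eta_i$. I fix $z\in\mathbb C^{+}$ throughout; the case $z\in\mathbb C^{-}$ follows from $R_N(\bar z)=R_N(z)^{*}$ and Schwarz reflection for $\omega_1$.

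Write $X_N=C_N+U_N^{*}D_NU_N$, $G_N=\mathbb E[R_N(z)]$, $S_N=G_N^{-1}+C_N$. By Lemma \ref{l} (with $C_N,D_N$ in the roles of $A_N,B_N$), $G_N$ is invertible, $\sup_N\|G_N^{-1}\|<\infty$, and $\Im S_N=\Im G_N^{-1}\ge(\Im z)I_N$. Applying Lemma \ref{bicommutant} with $b=zI_N-C_N$ and $D_N$ in the role of $B_N$ — so that $R(b)=R_N(z)$, $G(b)=G_N$, $G(b)^{-1}-b=S_N-zI_N$, and $Yb-bY=C_NY-YC_N$ — the identity \eqref{eq:second-commutation} takes the form
\[
YS_N-S_NY=G_N^{-1}\,\mathbb E\bigl[(R_N(z)-G_N)(C_NY-YC_N)(R_N(z)-G_N)\bigr]\,G_N^{-1},\qquad Y\in M_N(\mathbb C).
\]
For a rank-one $Y=k_0h_0^{*}$ with $\|k_0\|=\|h_0\|=1$, the matrix $C_NY-YC_N=(C_Nk_0)h_0^{*}-k_0(C_Nh_0)^{*}$ has rank at most two, so for unit vectors $h,k$ the scalar $k^{*}(YS_N-S_NY)h$ expands into finitely many terms of the form $\mathbb E\bigl[(a_1^{*}(R_N(z)-G_N)b_1)(a_2^{*}(R_N(z)-G_N)b_2)\bigr]$ with fixed vectors $a_j,b_j$ whose norms are controlled by $\|G_N^{-1}\|$ and $\|C_N\|$. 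By the Cauchy--Schwarz inequality and the variance bound $\mathbb V\bigl(a^{*}R_N(z)b\bigr)=O(1/N)$ of Lemma \ref{concentration}(ii), each such term is $O(1/N)$; hence $|k^{*}(YS_N-S_NY)h|\le\varepsilon_N\|Y\|$ for all rank-one $Y$ and all unit $h,k$, with $\varepsilon_N=O(1/N)\to0$. Lemma \ref{lem:W(T)} then gives $\|S_N-w_NI_N\|\le2\varepsilon_N$ for every $w_N\in W(S_N)$.

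Since $\sup_N\|S_N\|<\infty$ and $w_N=h^{*}S_Nh$ for a unit vector $h$ forces $\Im w_N\ge\Im z$, the $w_N$ lie in a fixed compact subset of $\mathbb C^{+}$, and it suffices to show that every subsequential limit $w_\infty$ equals $\omega_1(z)$. Along such a subsequence, $w_NI_N-C_N$ has imaginary part $\ge(\Im z)I_N$, hence $G_N=(w_NI_N-C_N)^{-1}+o(1)$ in norm, and therefore $P_NG_NP_N^{*}\to\mathrm{Diag}\bigl(1/(w_\infty-\eta_i)\bigr)_{i=1}^{p}$. To identify $w_\infty$, I compare normalized traces: first, $\mathbb E[\mathrm{tr}_NR_N(z)]\to G_{\mu\boxplus\nu}(z)$ by \cite{Voiculescu91} and bounded convergence, while $\mathrm{tr}_N(w_NI_N-C_N)^{-1}=G_{\mu_{C_N}}(w_N)\to G_\mu(w_\infty)$ by weak convergence of $\mu_{C_N}$ to $\mu$; hence $G_\mu(w_\infty)=G_{\mu\boxplus\nu}(z)$, i.e. $F_\mu(w_\infty)=F_{\mu\boxplus\nu}(z)$. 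Running the same argument for $\widehat X_N=D_N+U_NC_NU_N^{*}$, which has the same spectrum and is again of the required form ($U_N^{*}$ being Haar), produces $\widehat w_\infty\in\mathbb C^{+}$ with $F_\nu(\widehat w_\infty)=F_{\mu\boxplus\nu}(z)$ and $\mathbb E[(zI_N-\widehat X_N)^{-1}]=(\widehat w_\infty I_N-D_N)^{-1}+o(1)$. Taking the expected normalized trace of the identity $R_N(z)(zI_N-X_N)=I_N$ and using $\mathbb E[\mathrm{tr}_N(U_N^{*}D_NU_NR_N(z))]=\mathrm{tr}_N\bigl(\mathbb E[(zI_N-\widehat X_N)^{-1}]D_N\bigr)$, a short computation gives $w_\infty+\widehat w_\infty-z=F_{\mu\boxplus\nu}(z)$. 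Thus $(w_\infty,\widehat w_\infty)$ satisfies the subordination system \eqref{EqSubord+} with $\Im w_\infty,\Im\widehat w_\infty>0$, so $w_\infty$ is a fixed point in $\mathbb C^{+}$ of the self-map $w\mapsto F_\nu\bigl(F_\mu(w)-w+z\bigr)-\bigl(F_\mu(w)-w\bigr)$; by item (3) of Subsection \ref{subsubsec:eq+} this map has $\omega_1(z)$ as its unique fixed point in $\mathbb C^{+}$, so $w_\infty=\omega_1(z)$. As every subsequential limit equals $\omega_1(z)$, we conclude $w_N\to\omega_1(z)$, and the proposition follows.

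I expect the main obstacle to be this last identification: the asymptotic scalarness of $G_N^{-1}+C_N$ together with $\mathrm{tr}_NG_N\to G_{\mu\boxplus\nu}(z)$ only pins down $G_\mu(w_\infty)$, not $w_\infty$ itself, since $G_\mu$ need not be injective on $\mathbb C^{+}$. The symmetric computation on $\widehat X_N$ furnishing the second relation $w_\infty+\widehat w_\infty-z=F_{\mu\boxplus\nu}(z)$ is what makes the fixed-point characterization of the subordination functions applicable; an alternative would be to establish $w_N\to\omega_1(z)$ first for $\Im z$ large (where everything is conformal near $\infty$) and then continue analytically, but that is more delicate because $w_N$ is not a priori known to converge before the identification is made.
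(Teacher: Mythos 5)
Your proposal is correct. The first half --- proving $\|S_N(z)-w_N(z)I_N\|\to 0$ via the identity \eqref{eq:second-commutation} from Lemma \ref{bicommutant}, the variance bound of Lemma \ref{concentration}/Corollary \ref{cor:variance-estimate}, and the numerical-range argument of Lemma \ref{lem:W(T)} --- is precisely Lemma \ref{approximatesubordination} in the paper, proved the same way. Where you diverge is in identifying the subsequential limit $w_\infty$ with $\omega_1(z)$. You correctly flag that asymptotic scalarity together with $\mathrm{tr}_N\,\mathbb E[R_N(z)]\to G_{\mu\boxplus\nu}(z)$ only yields $G_\mu(w_\infty)=G_{\mu\boxplus\nu}(z)$, and you resolve this by running the symmetric estimate on $\widehat X_N=D_N+U_NC_NU_N^*$ and extracting the third relation $w_\infty+\widehat w_\infty-z=F_{\mu\boxplus\nu}(z)$ from the normalized trace of $R_N(z)(zI_N-X_N)=I_N$; the resulting pair then solves the full system \eqref{EqSubord+}, and the fixed-point characterization in item (3) of Subsection \ref{subsubsec:eq+} forces $w_\infty=\omega_1(z)$. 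The paper's own write-up of this final step is terse --- it invokes normality and declares convergence without spelling out the identification --- and the intended argument is the one you dismissed as delicate: use the bound $\Im\omega_{N,i}(z)\ge\Im z$ and the expansion $\omega_{N,i}(z)=z-\mathrm{tr}_N(D_N)+O(1/z)$ near $\infty$ (made precise later via the Nevanlinna representations in Proposition \ref{estimfonda}) to see that any locally uniform subsequential limit $w$ is analytic at $\infty$ with $w(z)/z\to 1$, and then appeal to the local invertibility of $G_\mu$ near $\infty$ together with the identity theorem to get $w=\omega_1$ on all of $\mathbb C^+$. Your route trades that global analytic-continuation step for the additional trace computation and the symmetric model, and it has the merit of working pointwise in $z$; either is sound, and yours makes explicit a gap that the paper leaves to the reader. (One trivial typo in the paper you reproduced correctly: the displayed equality $G_{\mu\boxplus\nu}(z)=G_\nu(\omega_1(z))$ in the paper's proof should read $G_\mu(\omega_1(z))$, consistent with \eqref{subord1}.)
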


\begin{proof}
Since all functions involved satisfy $f(\overline{z})=f(z)^*$, it suffices to consider the case of
$z\in\mathbb C^+$. Fix such a scalar $z$ and apply Lemma \ref{bicommutant}(2)  to 
$b=zI_N-C_N$ to conclude that the $N\times N$ matrix $\mathbb{E}[{R}_{N}(z)]$ is
diagonal. Set 
\begin{equation}\label{approxsubordfcn}
\omega_{N,i}(z)=\frac{1}{\mathbb{E}[{R}_{N}(z)]_{ii}}+(C_N)_{ii},\quad 1\leq i\leq p.
\end{equation}
and observe that $\Im \omega_{N,i}(z)\geq \Im z$ for $z\in \mathbb{C}^+$ by Lemma \ref{l}.
We proceed to  show that this function satisfies an approximate subordination relation. We state this separately for future reference.
\begin{lem}
\label{approximatesubordination}
We have
\[
\lim_{N\to\infty}\Vert\mathbb{E}[{R}_{N}(z)]-(\omega_{N,i}(z)I_{N}-C_N)^{-1}\Vert=0,\quad z\in\mathbb{C}^{+},1\leq i\leq p.
\]
\end{lem}

\begin{proof}
The existence of
\[
\Omega_{N}(z)=\mathbb{E}[{R}_{N}(z)]^{-1}+C_N,\quad z\in\mathbb{C}^{+},
\]
is guaranteed by Lemma \ref{l}. We apply now Lemma \ref{bicommutant} with  $zI_N-C_N$ in place of $b$ and
$D_N$ in place of $B_N$, so $\mathbb{E}[{R}_{N}(z)]=G(zI_N-C_N)$.
Relation (\ref{eq:second-commutation}) 
shows that
\begin{eqnarray*} \lefteqn{Y{\Omega_{N}(z)-\Omega_{N}(z)Y}=
\mathbb{E}[{R}_{N}(z)]^{-1}
\mathbb{E}\big[({R}_{N}(z)-\mathbb{E}[{R}_{N}(z)])}
\\
&\quad &\times(YC_N-C_NY)({R}_{N}(z)-\mathbb{E}[{R}_{N}(z)])\big]\mathbb{E}[{R}_{N}(z)]^{-1},\quad Y\in M_{N}(\mathbb{C}).
\end{eqnarray*}
Suppose that $Y$ has rank
one and $h,k\in\mathbb{C}^{N}$ are unit vectors. In this case, there exist
rank one projections $p_{1},p_{2}$ and rank two projections $q_{1},q_{2}$ (depending on $z$)
such that
\begin{eqnarray*}
\lefteqn{k^{*}(Y\Omega_{N}(z)-\Omega_{N}(z)Y)h=
k^{*}\mathbb{E}[{R}_{N}(z)]^{-1}}\\
& & \mbox{}\times\mathbb{E}[p_{1}({R}_{N}(z)-\mathbb{E}[{R}_{N}(z)])q_{1}(YC_N-C_NY)q_2({R}_{N}
(z)-\mathbb{E}[{R}_{N}(z)])p_{2}]\\
& & \mbox{}\times\mathbb{E}[{R}_{N}(z)]^{-1}h.
\end{eqnarray*}
Indeed, the third and first factors in the product above have rank one, while the rank of $YC_N-
C_NY$ is at most two.
We deduce that
\begin{eqnarray*}
|k^{*}(\Omega_N(z)Y-Y\Omega_N(z))h| & \le & \|\mathbb{E}[{R}_{N}(z)]^{-1}\|^2\|YC_N-C_NY\|\\
 &  & \times \mathbb{E}[\|p_{1}({R}_{N}(z)-\mathbb{E}[{R}_{N}(z)])q_{1}\|^{2}]^{1/2}\\
 &  & \times \mathbb{E}[\|q_{2}({R}_{N}(z)-\mathbb{E}[{R}_{N}(z)])p_{2}\|^{2}]^{1/2}.
\end{eqnarray*}
We use now the estimates from Lemma \ref{l} and  Corollary \ref{cor:variance-estimate} along with the 
inequality $\|YC_N-C_NY\|\le 2\|C_N\|\|Y\|$  to obtain a constant $C>0$ (independent of $N$ and $z$) 
such that
\[
|k^{*}(\Omega_{N}(z)Y-Y\Omega_{N}(z))h|\le C\frac{(|z|+1+(1/\Im z))^{2}}{N|\Im z|^{4}}\|Y\|.
\]
The number $\omega_{N,i}(z)$ is precisely the $(i,i)$ entry of the matrix $\Omega_{N}(z)$ and thus it 
belongs to the numerical range $W(\Omega_{N}(z))$; indeed it equals
 $e_i^*\Omega_{N}(z)e_i$, where $e_1,\dots,e_N$ is the basis in which
$C_N$ is diagonal. Lemma \ref{lem:W(T)} yields the estimate
\[
\|\Omega_{N}(z)-\omega_{N,i}(z)I_{N}\|\le2C\frac{(|z|+1+(1/\Im z))^{2}}{N|\Im z|^{4}},
\]
which gives the desired result as $N\to\infty$.
\end{proof}
Fix now $i\in\{1,\ldots,p\}$ and observe that the family of functions $(\omega_{N,i})_N$ is normal on
$\mathbb{C}^+$. Voiculescu's asymptotic freeness result shows that
$$\lim_{N\to\infty}\text{tr}_N(\mathbb{E}[{R}_{N}(z)])=G_{\mu\boxplus\nu}(z)
=G_\nu(\omega_1(z)),$$
so Lemma \ref{approximatesubordination}  implies that
$\omega_{N,i}$ converges  uniformly on compact subsets of $\mathbb{C}^+$.
This, together with a second application of Lemma \ref{approximatesubordination}, implies \eqref{secondapproximation}
and concludes the proof of Proposition 
\ref{estimationinmean}.
\end{proof}
The convergence result for the functions $F_N$ also uses a normal family argument, more specifically the fact that
a normal sequence converges uniformly on compact sets if it converges pointwise on a set with an accumulation point which belongs to the domain. 

\begin{prop}
\label{uniformconvergence}Almost surely, the sequence
$\{F_N\}_{N}$ converges uniformly on compact subsets of $\mathbb{C}\setminus K$
to the analytic function $F$ defined  by
\begin{equation}\label{F(z)}
F(z)=\mathrm{Diag}\left(1-\frac{\theta_{1}-\alpha}{\omega_1(z)-\alpha},
\ldots,1-\frac{\theta_{p}-\alpha}{\omega_1(z)-\alpha}\right),\quad z\in\mathbb C\setminus K.
\end{equation}
\end{prop}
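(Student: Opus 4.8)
The plan is to derive this from the already-proved Proposition~\ref{estimationinmean} by upgrading its statement about the expected resolvent to an almost sure statement about the resolvent itself (via the concentration Lemma~\ref{concentration}), and then to pass from pointwise convergence on $\mathbb{C}^{+}$ to uniform convergence on compact subsets of $\mathbb{C}\setminus K$ by a normal families argument.

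First I would unwind the definition of $F_N$. Writing $R_N'(z)=(zI_N-X_N')^{-1}$, formula \eqref{MN} reads $F_N(z)=I_p-P_NR_N'(z)P_N^{*}\Theta$, so its $(i,j)$ entry equals $\delta_{ij}-(\theta_j-\alpha)\bigl(P_NR_N'(z)P_N^{*}\bigr)_{ij}$. The matrices $A_N'$ and $B_N'$ are real diagonal with uniformly bounded norms, their empirical eigenvalue distributions converge weakly to $\mu$ and $\nu$ (deleting the fixed number of spikes does not affect the weak limit), and the first $p$ diagonal entries of $A_N'$ all equal $\alpha$; hence Proposition~\ref{estimationinmean}, applied with $C_N=A_N'$, $D_N=B_N'$ and $\eta_1=\dots=\eta_p=\alpha$, gives
\[
\lim_{N\to\infty}P_N\mathbb{E}[R_N'(z)]P_N^{*}=\frac{1}{\omega_1(z)-\alpha}\,I_p,\qquad z\in\mathbb{C}\setminus\mathbb{R}.
\]
By Remark~\ref{4.13}, Lemma~\ref{concentration}(i) applies to $R_N'$, so for each fixed $z\in\mathbb{C}\setminus\mathbb{R}$ we have $P_NR_N'(z)P_N^{*}-P_N\mathbb{E}[R_N'(z)]P_N^{*}\to0$ almost surely; combining this with the display above yields $F_N(z)\to I_p-\frac{1}{\omega_1(z)-\alpha}\Theta=F(z)$ almost surely. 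Intersecting these probability-one events over a countable dense subset $D\subset\mathbb{C}^{+}$, together with the probability-one event $\{\delta_N\to0\}$ provided by \cite[Corollary 3.1]{ColMal11}, I obtain a single event $\mathcal{E}$ of probability one on which $\delta_N\to0$ and $F_N(z)\to F(z)$ for every $z\in D$.

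On $\mathcal{E}$ I would finish by a normality argument. Fix a compact $L\subset\mathbb{C}\setminus K$ and put $d=\text{dist}(L,K)>0$. Since $\delta_N\to0$ and $\sigma(X_N')\subseteq K_{\delta_N}$, for all large $N$ the set $L$ is disjoint from $K_{\delta_N}$, so $F_N$ is analytic on a neighbourhood of $L$ and $\|R_N'(z)\|\le\text{dist}(z,K_{\delta_N})^{-1}\le 2/d$ for $z\in L$, whence $\|F_N(z)\|\le1+(2/d)\|\Theta\|$ there. Thus the tail of $\{F_N\}$ is uniformly bounded on every compact subset of the connected open set $\mathbb{C}\setminus K$, hence normal (by Montel, applied to each entry), and since it converges on $D$, which has accumulation points in $\mathbb{C}\setminus K$, the Vitali--Porter theorem gives uniform convergence on compact subsets of $\mathbb{C}\setminus K$ to an analytic function agreeing with $F$ on $D$. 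It remains only to observe that $F$ itself is analytic on all of $\mathbb{C}\setminus K$: on $\mathbb{C}\setminus\mathbb{R}$ the map $\omega_1$ is analytic and takes values in $\mathbb{C}\setminus\mathbb{R}$ (hence $\omega_1(z)\ne\alpha$), and across each point of $\mathbb{R}\setminus K$, $\omega_1$ continues meromorphically with $\omega_1\notin\text{supp}(\mu)\ni\alpha$ by Lemma~\ref{lem:extension}(b), so that $1/(\omega_1(z)-\alpha)$ extends analytically, with value $0$ at any pole of $\omega_1$. By the identity theorem the limit equals $F$ throughout $\mathbb{C}\setminus K$.

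The genuine analytic work is already contained in Proposition~\ref{estimationinmean} and in the approximate-subordination Lemma~\ref{approximatesubordination} behind it; the only points requiring some care here are the bookkeeping turning ``almost sure for each fixed $z$'' into a single almost sure event through a countable dense set plus the normal-family upgrade, and the verification that the poles of $\omega_1$ on $\mathbb{R}\setminus K$ do not create singularities of $F$.
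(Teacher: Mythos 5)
Your proof is correct and follows essentially the same route as the paper: apply Proposition~\ref{estimationinmean} with $C_N=A_N'$, $D_N=B_N'$ to get convergence of the compressed expected resolvent, upgrade to almost sure pointwise convergence via Lemma~\ref{concentration}(i), intersect over a countable set, and conclude by normal families using the uniform boundedness that follows from $\sigma(X_N')\subseteq K_{\delta_N}$ with $\delta_N\to0$. You spell out a few steps (the Vitali--Porter upgrade, the analyticity of $F$ near poles of $\omega_1$) that the paper treats more briefly, but the argument is the same.
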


\begin{proof}
Lemma \ref{lem:extension}, part (b), and the hypothesis 
on $\alpha$, show that the function $z\mapsto{1}/({\omega_1(z)-\alpha})$ is analytic on 
$\overline{\mathbb C}\setminus K$. Define 
\[
\mathcal{D}=\{z\in\mathbb{C}\setminus K\colon\Re z\in\mathbb{Q},\Im z\in\mathbb{Q}\setminus\{0\}\}.
\]
The first $p$ diagonal elements of $A_{N}'$ are all equal to $\alpha$ and thus
 Lemma \ref{concentration}(i) and \eqref{secondapproximation} 
(applied to $C_N=A_{N}'$ and $D_N= B_{N}'$) show that given $z\in\mathcal{D}$,
the sequence $P_N(zI_{N}-X_{N}')^{-1}P_N^*$ converges almost surely to $(1/(\omega_1 (z)-\alpha))I_p.$
Moreover, by \cite{ColMal11},  these functions are almost surely uniformly bounded on any compact subset of
$\mathbb{C}\setminus K$. Uniform boundedness on some neighbourhood of infinity in
$\mathbb C\cup\{\infty\}$ is automatic.
We deduce that, almost surely, this sequence converges
uniformly on compact subsets of $\overline{\mathbb{C}}\setminus K$
to the function $({1}/({\omega_1 -\alpha})) I_{p}$. The Proposition follows immediately
from these facts and \eqref{MN}.
\end{proof}

\subsubsection{Proofs of the main results for the additive model.}
\begin{proof}
[Proof of Theorem {\rm\ref{Main+}}, parts {\rm (1)} and {\rm(2)}---eigenvalue behaviour.]
 We proceed in two steps.
\newline\noindent{\bf Step 1.}  We investigate first the case in which $q=0$, that is, $B_N=B_N'$ has no spikes.
Equivalently, we prove our result for the simpler model $A_N+U_N^*B'_NU_N$.
We can work on the almost
sure event, whose existence is guaranteed by Proposition \ref{uniformconvergence},
on which
\begin{itemize}
\item $\lim_{N\to\infty}\delta_N=0$,
\item $\sigma(A_{N}'+U_{N}^{*}B_{N}'U_{N})\subseteq K_{\delta_{N}}$
for all $N$, and,
\item 
the sequence $(F_{N})_{N\geq p}$ converges to the function $F$ defined by \eqref{F(z)}
uniformly on the compact subsets of $\overline{\mathbb{C}}\setminus K$.
\end{itemize}

 We  apply Lemma \ref{alt-Benaych-Rao} on this event, with $\gamma=\mathbb R$. We first argue that
the hypotheses of that lemma are satisfied. The values of $F$  are clearly diagonal matrices and $F(\infty 
)=I_p$. 
We show that the zeros of $(F(z))_{ii}$ are simple. Indeed, 
$$
(F'(z))_{ii}=\frac{\omega_1'(z)(\theta_i-\alpha)}{(\omega_1(z)-\alpha)^2},
$$
and the zeros of $\omega_1'$ are simple by the Julia-Carath\'eodory Theorem \cite[Chapter I, 
Exercises 6 and 7]{GarnettBook} because $\omega_1(\mathbb{C}^+)\subset\mathbb{C}^+$, 

Hypotheses (1) and (3) of  Lemma  \ref{alt-Benaych-Rao} follow from Proposition 
\ref{uniformconvergence}. To verify hypothesis (2) of Lemma \ref{alt-Benaych-Rao}, 
observe that if $F_N(z)$ is not invertible then $z$ is an eigenvalue of the selfadjoint matrix $A_N+U_N^*
B_N'U_N$, hence real.
There are arbitrarily small numbers  $\delta>0$ such that the boundary points of $K_{\delta}$ are 
not zeroes of $\det(F)$. When this condition is satisfied, Lemma \ref{alt-Benaych-Rao} yields precisely
the conclusion of Theorem \ref{Main+}(1)--(2), when $q=0$. Indeed, as explained
in Subsection \ref{551}, the eigenvalues of $A_{N}+U_{N}^{*}B_{N}'U_{N}$ 
in $\mathbb{C}\setminus K_{\delta}$
are exactly the zeroes of $\det(F_{N})$, and the set of points $z$
such that $F(z)$ is not invertible is precisely $\bigcup_{i=1}^p\omega_1^{-1}(\{
\theta_i\})$. This completes the first step.
\newline\noindent{\bf Step 2.} 
Suppose now that $q>0$ and use Step 1 above to obtain the existence of a sequence of positive random 
variables $(\delta_{N})_{N\in\mathbb N}$  such that $\lim_{N\to\infty}\delta_N=0$ almost surely and
$
\sigma(A_{N}+U_{N}^{*}B_{N}'U_{N})\subseteq  K^{\prime\prime}_{\delta_{N}},
$
where
$$K''=K\cup\bigcup_{i=1}^p\omega_1^{-1}(\{\theta_i\}).$$
We proceed as in Step 1 (switching  the roles of $A_N$ and $B_N$) in order to conclude that the 
eigenvalues of $X_N$ outside $K^{\prime\prime}_{\delta_{N}}$ are precisely the zeros of the function 
$\det(I_{q}-Q_N\left(zI_{N}-U_{N}A_{N}U_{N}^{*}-B_{N}'\right)^{-1}\,Q_N^*{ T})$ in that open set, where
\[
{T}=\text{Diag}(\tau_{1}-\beta,\dots,\tau_{q}-\beta).
\] 
and $Q_N$ is the orthogonal projection $\mathbb{C}^N\to\mathbb{C}^q$. 
Lemma \ref{alt-Benaych-Rao} is applied now to the functions 
$$
\widetilde{F}_N(z)=I_{s}-Q_N\left(zI_{N}-U_{N}A_{N}U_{N}^{*}-B_{N}'\right)^{-1}\,Q_N^*{T},\quad N\ge q,
$$
$$
\widetilde{F}(z)=\mathrm{Diag}\left(1-\frac{\tau_{1}-\beta}{\omega_2(z)-\beta},\dots,
1-\frac{\tau_{q}-\beta}{\omega_2(z)-\beta}\right),
$$
and the compact set $K'$. The convergence of $\{\widetilde{F}_N\}_N$ to $\widetilde{F}$ follows by
an adaptation of Proposition \ref{uniformconvergence}. This completes the proof of parts (1) and (2) of 
Theorem \ref{Main+} in the general case $q>0$, provided that $k=0$. By symmetry, we have also proved 
these assertions in case $\ell=0$. 
To prove part (2) in case $k\ne0\ne\ell$, we use a
perturbation argument. Fix $\rho\in\mathbb{R}\setminus K$ such that $\omega_1(\rho)=\theta_{i_0}$  
for some $i_0\in\{1,\dots,p\}$ and $\omega_2(\rho)=\tau_{j_0}$ for some $j_0\in\{1,\dots,q\}$, and fix 
$\varepsilon>0$ as in the statement of (2).  Choose $\delta\in(0,\varepsilon/3)$ so small that 
$\omega_1((\rho-3\delta,\rho+3\delta))$ contains no spikes $\theta_{i}\ne\theta_{i_0}$  and
$\omega_2((\rho-3\delta,\rho+3\delta))$ contains no spikes $\tau_{j}\ne\theta_{j_0}$. Since 
$\omega'_1$ is strictly increasing on $(\rho-3\delta,\rho+3\delta)$, we have 
$\omega_1(\rho+2\delta)=\theta_1+\eta$, with $\eta>0$. We use the already established part (2) of the 
theorem to conclude that, almost surely for large $N$, the perturbed matrix
$$X'_N=X_N+\eta E_{A_N}(\{\theta_i \})$$
has $k$ eigenvalues in $(\rho-\delta,\rho+\delta)$ and another $\ell$  eigenvalues in the disjoint interval 
$(\rho+\delta,\rho+3\delta)$.  An application of Lemma \ref{eigenspaces} for sufficiently small $\delta$ 
shows that $X_N$ has $k+\ell$ eigenvalues in $(\rho-\varepsilon,\rho+\varepsilon)$.
\end{proof}

\begin{proof}
[Proof of Theorem {\rm\ref{Main+}}, parts {\rm (3)} and {\rm (4)}---eigenspace behaviour.] 
We borrow heavily from the techniques of 
\cite{Capitaine11}. There are again two steps.
\newline\noindent{\bf Step A.} We prove parts (3) and (4) of  Theorem \ref{Main+} under the additional 
assumption that $\theta_1>\dots>\theta_p$, $\tau_1>\dots>\tau_q$, $k=1$, and $\ell=0$. Thus  
$\omega_1 (\rho)=\theta_{i_0}$ for some $i_0\in\{1,\dots,p\}$, and $\omega_2(\rho)\notin\{\tau_1,
\dots,\tau_q\}$. Assertion (3) of Theorem \ref{Main+} follows if the equalities
$$
\lim_{N\to\infty}
\left\|E_{A_N}(\{\theta_i\})E_{X_N}((\rho-\varepsilon,\rho+\varepsilon))E_{A_N}(\{\theta_i\})
-\frac{\delta_{i_0i}}{\omega'_1(\rho)}E_{A_N}(\{\theta_i\})\right\|=0,
$$
and
$$
\lim_{N\to\infty}\|E_{B_N}(\{\tau_j\})E_{X_N}((\rho-\varepsilon,\rho+\varepsilon))E_{B_N}(\{\tau_j\})\|=0,
$$
are shown to hold almost surely for all $i=1,\dots,p,j=1,\dots,q.$ The Hermitian matrices in these 
equations have rank one, so their norm is equal to the absolute value of their trace.  Thus we need to 
show that
\begin{equation}
\label{pr-a}
\lim_{N\to\infty}{\rm Tr}_N[E_{A_N}(\{\theta_i\})E_{X_N}((\rho-\varepsilon,\rho+\varepsilon))]=\frac{\delta_{i_0i}}{\omega_1'(\rho)},\quad i=1,\dots,p,
\end{equation}
and
\begin{equation}
\label{pr-b}
\lim_{N\to\infty}{\rm Tr}_N[E_{B_N}(\{\tau_j\})E_{X_N}((\rho-\varepsilon,\rho+\varepsilon))]=0,\quad j=1,\dots,q,
\end{equation}
almost surely.
It is useful to write the random variable in \eqref{pr-a} in terms of  functional calculus with \emph{continuous} rather than indicator functions. Choose $\delta>0$ so small that each interval $[\theta_i-\delta,\theta_i+\delta]$ contains exactly one point of $\sigma(A_N)$ (namely, $\theta_i$) for $i=1,\dots,p$ and for large $N$. For each $i=1,\dots,p$, choose a function $f_i\in\mathcal{D}(\mathbb{R})$ with support in $[\theta_i-\delta,\theta_i+\delta]$ such that $0\le f_i\le 1$ and $f_i(\theta_i)=1$. Also choose a function $h\in\mathcal{D}(\mathbb{R})$ with 
support in $[\rho -\varepsilon,\rho+\varepsilon]$ such that $0\le h\le1$ and $h(x)=1$ for $x\in[\rho -\varepsilon/2,\rho+\varepsilon/2]$. For sufficiently large $N$, we have $E_{A_N}(\{\theta_i\})=f_i(A_N)$.  Also, by the already established assertion (1) of the theorem, we have $E_{X_N}((\rho-\varepsilon,\rho+\varepsilon))=h(X_N)$ almost surely for $N$ sufficiently large. Thus we see that, almost surely for sufficiently large $N$,
\begin{equation}\label{egalite}
{\rm Tr}_N[E_{A_N}(\{\theta_i\})E_{X_N}((\rho-\varepsilon,\rho+\varepsilon))]=
 {\rm Tr}_N\left[h(X_N) f_i(A_N)\right],\quad i=1,\dots,p.
\end{equation}
To conclude the proof of \eqref{pr-a}, we obtain as in Lemma \ref{concentration} a concentration inequality for the right-hand side
of \eqref{egalite} and then we estimate the expected value. In the following argument we use the fact that a Lipschitz function on $\mathbb R$ remains Lipschitz, with the same constant, when considered as a function on the selfadjoint matrices endowed with the Hilbert-Schmidt norm. See \cite[Lemma A.2]{Capitaine11} for a simple proof of this fact, first observed in \cite{birman}.

\begin{lem}\label{inegconc}Fix $i\in\{1,\dots,p\}$, denote by $\gamma$ the Lipschitz constant of the function $h$, and set $C=\sup_N \Vert B_N \Vert$.
For $N$ sufficiently large, the random variable $Z_N={\rm Tr} \left[ h(X_N) f_i(A_N)\right]$
satisfies the concentration inequality
$$ \mathbb{P}\left( \vert Z_N-\mathbb {E}(Z_N) \vert > \eta \right) \leq 2 \exp\left(-\frac{\eta^2 N}{4C ^2\gamma^2}\right),\quad \eta>0.$$

\end{lem}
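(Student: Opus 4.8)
The plan is to exhibit $Z_N$ as a Lipschitz function of the Haar unitary $U_N$ and then apply the same concentration inequality on the unitary group, \cite[Corollary 4.4.28]{AGZ10}, that was already used in the proof of Lemma \ref{concentration}. Concretely, define $\Phi\colon\mathrm{U}(N)\to\mathbb R$ by
$$
\Phi(U)=\mathrm{Tr}_N\left[h(A_N+U^*B_NU)f_i(A_N)\right],
$$
so that $Z_N=\Phi(U_N)$; it then suffices to bound the Lipschitz constant of $\Phi$ with respect to the Hilbert--Schmidt metric $\|U-V\|_2$ on $\mathrm{U}(N)$ and feed this bound into the cited inequality.

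For the Lipschitz estimate I would chain three elementary bounds. Write $X_U=A_N+U^*B_NU$. First, since $f_i(A_N)$ is self-adjoint, the Cauchy--Schwarz inequality for the Hilbert--Schmidt inner product gives $|\Phi(U)-\Phi(V)|\le\|h(X_U)-h(X_V)\|_2\,\|f_i(A_N)\|_2$; and for $N$ large enough the support of $f_i$ meets $\sigma(A_N)$ only at $\theta_i$, so $f_i(A_N)=E_{A_N}(\{\theta_i\})$ is the rank-one spectral projection and $\|f_i(A_N)\|_2=1$. Second, the fact recalled just before the lemma---that a function on $\mathbb R$ with Lipschitz constant $\gamma$ remains Lipschitz with the same constant when viewed as a map on self-adjoint matrices endowed with the Hilbert--Schmidt norm (see \cite[Lemma A.2]{Capitaine11})---yields $\|h(X_U)-h(X_V)\|_2\le\gamma\,\|X_U-X_V\|_2$. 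Third, writing $U^*B_NU-V^*B_NV=U^*B_N(U-V)+(U-V)^*B_NV$ we get $\|X_U-X_V\|_2=\|U^*B_NU-V^*B_NV\|_2\le 2\|B_N\|\,\|U-V\|_2\le 2C\|U-V\|_2$. Combining the three, $\Phi$ has Lipschitz constant at most $2C\gamma$ on $\mathrm{U}(N)$.

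Finally, applying \cite[Corollary 4.4.28]{AGZ10} to $\Phi$, exactly as in the proof of Lemma \ref{concentration}, produces a sub-Gaussian tail of the form $\mathbb P(|Z_N-\mathbb E(Z_N)|>\eta)\le 2\exp(-cN\eta^2/(C^2\gamma^2))$, and tracking the numerical constant through the Lipschitz bound and the cited inequality gives the announced value, yielding Lemma \ref{inegconc}. I do not expect a genuine obstacle here: this is a routine Lipschitz-plus-concentration argument, and the only points that require care are the ``$N$ sufficiently large'' reduction that identifies $f_i(A_N)$ with the rank-one spectral projection $E_{A_N}(\{\theta_i\})$ (so that its Hilbert--Schmidt norm is exactly $1$, not merely bounded), and the bookkeeping of the constant in the exponent.
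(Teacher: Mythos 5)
Your proof is correct and follows the same route as the paper: express $Z_N$ as a function of $U_N$, bound its Lipschitz constant by $2C\gamma$ using Cauchy--Schwarz for the Hilbert--Schmidt inner product, the Lipschitz functional-calculus bound of \cite[Lemma A.2]{Capitaine11}, and the triangle inequality for $U^*B_NU-V^*B_NV$, then apply \cite[Corollary 4.4.28]{AGZ10}. The only cosmetic difference is that you observe $\|f_i(A_N)\|_2=1$ exactly (since $f_i(A_N)=E_{A_N}(\{\theta_i\})$ is a rank-one projection for large $N$), whereas the paper simply uses $\|f_i(A_N)\|_2\le1$; both yield the same constant.
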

\begin{proof} The lemma follows from  \cite[Corollary 4.4.28]{AGZ10} once we establish that the Lipschitz constant of the function $$g(U)= {\rm Tr}_N\left[ h (A_N+U^*B_NU)f_n(A_N)\right],\quad U\in{\rm U}(N),$$
is at most $2C\gamma$. For any $U$ and $V$ in ${\rm U}(N)$ we have 
\begin{eqnarray*}
\vert g(U)-g(V) \vert &=&|{\rm Tr}_N\left[f_i(A_N)\left(h (A_N+U^*B_NU)
-h(A_N+V^*B_NV)\right)\right]|\\
&\leq& \|h (A_N+U^*B_NU)
-h (A_N+V^*B_NV)\|_2\\
&\leq&\gamma \Vert U^* B_N U -V^* B_N V \Vert_2 ,
\end{eqnarray*}
where we used the Cauchy-Schwarz inequality for the Hilbert-Schmidt norm and the fact that 
$\|f_i (A_N)\|_2\le1$. Since
$$\Vert U^* B_N U -V^* B_N V \Vert_2\le\Vert U^* B_N( U -V) \Vert_2+\|(U^*-V^*)B_NV\|_2\le2\|B_N\|\|U-V\|_2,$$
we conclude that
$
\vert g(U)-g(V) \vert \leq 2C\gamma \Vert U-V\Vert_2,
$
as desired.
\end{proof}

The above result, combined with the Borel-Cantelli lemma, yields immediately
\begin{equation*}
\label{concentre}
\lim_{N\to\infty}\left({\rm Tr}_N\left[ h (X_N) f_i(A_N)\right]-\mathbb{E}\left[ {\rm Tr}_N \left[ h (X_N) f_i(A_N)\right]\right]\right)=0,\quad i=1,\dots,p,
\end{equation*}
almost surely. We conclude the proof of \eqref{pr-a} by showing that
\begin{equation*}
\lim_{N\to\infty}\mathbb{E}\left[ {\rm Tr}_N \left[ h (X_N)f_i(A_N)\right]\right]=\frac{\delta_{i_0i}}{\omega'_1(\rho)}, \quad i=1,\dots,p.\end{equation*}
Lemma \ref{approxcauchy} with $C_N=f_i(A_N)$ allows us to rewrite this as
\begin{equation*}
\lim_{N\to\infty}\lim_{y\downarrow0}\Im\frac1\pi\int_{\rho-\varepsilon}^{\rho+\varepsilon}
\mathbb{E}\left[
{\rm Tr}_N[R_N(x+iy)f_i(A_N)]
\right]h(t)\,dt=-
\frac{\delta_{i_0i}}{\omega'_1(\rho)},\quad i=1,\dots,p,
\end{equation*}
or more simply, because $f_i(A_N)$ is the projection of $\mathbb{C}^N$ onto the $i$th coordinate,
\begin{equation}
\label{sperantze}
\lim_{N\to\infty}\lim_{y\downarrow0}\Im\frac1\pi\int_{\rho-\varepsilon}^{\rho+\varepsilon}
\mathbb{E}\left[
[R_N(x+iy)]_{ii}
\right]h(t)\,dt=-
\frac{\delta_{i_0i}}{\omega'_1(\rho)},\quad i=1,\dots,p.
\end{equation}
Lemma \ref{approximatesubordination} suggests writing
\begin{equation}\label{fund} 
\mathbb{E} [ R_N(z)_{ii}] = \frac{1}{\omega_1(z)-\theta_i}
+\Delta_{i,N}(z),\quad i=1,\dots,p,z\in\mathbb{C}^+.
\end{equation}
We proceed to estimate the functions $\Delta_{i,N}$.

\begin{prop}\label{estimfonda}
There exist positive numbers $\{a_N\}_N$ such that $\lim_{N\to\infty}a_N=0$ and   
 $$\left| \Delta_{i,N} (z)\right| \leq a_N (1+\vert z\vert)^4(1+|\Im z|^{-1})^4, \quad z\in \mathbb{C}\setminus \mathbb{R},i=1,\dots,p.$$

\end{prop}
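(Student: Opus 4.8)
The plan is to recognize $\Delta_{i,N}$ as the Cauchy--Stieltjes transform of a signed measure with uniformly bounded support and total variation, and then to quote Lemma~\ref{conv}. The whole argument is a reduction; the only points needing care are indicated at the end.

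\textbf{Identifying both terms as Cauchy transforms.} Since $\|A_N\|,\|B_N\|$ are uniformly bounded, $m:=\sup_N\|X_N\|<\infty$. For each $i$ the random probability measure $\langle E_{X_N}(\cdot)e_i,e_i\rangle$ is supported in $[-m,m]$, hence so is its expectation $\bar\mu_N^{(i)}:=\mathbb{E}[\langle E_{X_N}(\cdot)e_i,e_i\rangle]$, and $\mathbb{E}[R_N(z)_{ii}]=G_{\bar\mu_N^{(i)}}(z)$. On the other hand, $z\mapsto 1/(\omega_1(z)-\theta_i)$ maps $\mathbb{C}^+$ into $\mathbb{C}^-$ (because $\theta_i\in\mathbb{R}$ and $\Im\omega_1>0$ on $\mathbb{C}^+$) and satisfies $\lim_{y\to+\infty}iy/(\omega_1(iy)-\theta_i)=1$ since $\omega_1(iy)/iy\to 1$; by the characterization of Cauchy transforms recalled in Subsection~\ref{sec:boxplus} it equals $G_{\sigma_i}$ for a positive Borel measure with $\sigma_i(\mathbb{R})=1$. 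By Lemma~\ref{lem:extension}(b), $\omega_1$ continues meromorphically across $\mathbb{R}\setminus K$ with values in $(\mathbb{R}\cup\{\infty\})\setminus\mathrm{supp}(\mu)$, so $1/(\omega_1-\theta_i)$ is analytic and real-valued on $\mathbb{R}\setminus S_i$, where $S_i:=K\cup\omega_1^{-1}(\{\theta_i\})$; since $\omega_1$ is analytic at infinity with $\omega_1(\infty)=\infty$, the set $\omega_1^{-1}(\{\theta_i\})$ is bounded, its points are isolated and can only accumulate to $K$, so $S_i$ is compact and $\mathrm{supp}(\sigma_i)\subseteq S_i$ by the Stieltjes inversion formula.

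\textbf{Reduction to Lemma~\ref{conv}.} Thus $\Delta_{i,N}=G_{\rho_N^{(i)}}$ with $\rho_N^{(i)}:=\bar\mu_N^{(i)}-\sigma_i$ supported in a fixed compact interval and $\|\rho_N^{(i)}\|\le 2$. Proposition~\ref{estimationinmean}, applied with $C_N=A_N$ and $D_N=B_N$ (so that $\eta_i=\lim_N(A_N)_{ii}=\theta_i$ for $i\le p$), gives $\Delta_{i,N}(z)\to 0$ for every $z\in\mathbb{C}^+$; combined with the uniform bounds on support and total variation, a weak$^*$-compactness argument together with the Stieltjes inversion formula forces $\rho_N^{(i)}\to 0$ in the weak$^*$ topology. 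Lemma~\ref{conv} then yields, for each $i$, a sequence $v_N^{(i)}\to 0$ independent of $z$ with $|\Delta_{i,N}(z)|<\frac{|z|^2+2}{(\Im z)^2}\,v_N^{(i)}$ on $\mathbb{C}^+$. Setting $a_N:=2\max_{1\le i\le p}v_N^{(i)}$ and using $\frac{|z|^2+2}{(\Im z)^2}\le 2(1+|z|)^4(1+|\Im z|^{-1})^4$ gives the asserted bound on $\mathbb{C}^+$; it extends to $\mathbb{C}^-$ because both $\mathbb{E}[R_N(z)_{ii}]$ and $1/(\omega_1(z)-\theta_i)$ satisfy $f(\bar z)=\overline{f(z)}$, hence $\Delta_{i,N}(\bar z)=\overline{\Delta_{i,N}(z)}$.

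The two places needing care, and the likely main obstacle, are: verifying that $\sigma_i$ is genuinely compactly supported --- i.e.\ that $\omega_1^{-1}(\{\theta_i\})$ is bounded and accumulates only on $K$ --- and the passage from pointwise convergence $\Delta_{i,N}\to 0$ on $\mathbb{C}^+$ to the weak$^*$ convergence $\rho_N^{(i)}\to 0$, which relies crucially on the uniform control of the total variations and the supports of the $\rho_N^{(i)}$.
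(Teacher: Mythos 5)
Your proposal is correct, and it takes a genuinely different route from the paper's. The paper first passes to the auxiliary functions $\omega_{N,i}(z)=\bigl(\mathbb E[R_N(z)]_{ii}\bigr)^{-1}+\theta_i$, derives Nevanlinna representations
\[
\omega_{N,i}(z)=z-\mathrm{tr}_N(B_N)-\int_{[-m,m]}\frac{d\sigma_{N,i}(t)}{z-t},
\qquad
\omega_1(z)=z-\int_\mathbb R t\,d\nu(t)-\int_\mathbb R\frac{d\sigma(t)}{z-t},
\]
applies Lemma~\ref{conv} to $\sigma_{N,i}-\sigma$, and then converts the bound on $|\omega_{N,i}-\omega_1|$ into a bound on $\Delta_{i,N}$ via the elementary estimate $|1/(\omega_{N,i}-\theta_i)-1/(\omega_1-\theta_i)|\le|\omega_{N,i}-\omega_1|/(\Im z)^2$, which in turn uses $\Im\omega_{N,i}\ge\Im z$ and $\Im\omega_1\ge\Im z$. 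You instead apply Lemma~\ref{conv} directly to $\Delta_{i,N}=G_{\bar\mu_N^{(i)}-\sigma_i}$. This short-circuits the two-step estimate, but requires two things the paper does not need to check: that $\sigma_i$ (the Herglotz measure of $1/(\omega_1-\theta_i)$) is compactly supported, which you handle correctly using $\omega_1(\infty)=\infty$ and Lemma~\ref{lem:extension}(b) so that $\omega_1^{-1}(\{\theta_i\})\cup K$ is compact; and the deduction of weak${}^*$ convergence $\bar\mu_N^{(i)}-\sigma_i\to0$ from the pointwise convergence of Cauchy transforms supplied by Proposition~\ref{estimationinmean}, which your compactness-plus-Stieltjes-inversion argument justifies. (The paper must likewise establish weak${}^*$ convergence of $\sigma_{N,i}$ to $\sigma$, so neither approach escapes that step; the paper derives it from local uniform convergence of $\omega_{N,i}$, you from pointwise convergence of $\Delta_{i,N}$ plus the a priori compact supports and bounded total variations.) A minor cosmetic point: the paper obtains the specific exponents in the bound by controlling the masses $\|\sigma_{N,i}\|$ in terms of $\mathrm{tr}_N(B_N^2)-\mathrm{tr}_N(B_N)^2$, whereas you simply use $\|\bar\mu_N^{(i)}-\sigma_i\|\le2$; both give the stated polynomial growth after absorbing constants. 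Your argument is complete and arguably more economical, since it avoids computing Nevanlinna data for the $\omega_{N,i}$ explicitly; the paper's version, however, yields quantitative information about the subordination functions that is reused elsewhere (e.g.\ in Proposition~\ref{estimfondaX}).
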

\begin{proof}   
Define analytic functions $\omega_{N,i}$ for $i=1,\dots,p$ using \eqref{approxsubordfcn} with $i,A_N,B_N$ in place of $k,C_N,D_N$, respectively. 
These functions are analytic  outside the interval $[-\|A_N\|-\|B_N\|,\|A_N\|
+\|B_N\|]$, and the  hypothesis that $\|A_N\|$ and $\|B_N\|$ are uniformly bounded  implies their analyticity on $\mathbb C\setminus
[-m,m]$ for some $m>0$ independent of $N$. The operator $\mathbb E[R_N(z)]$ belongs to $\{A_N\}''$  by Lemma \ref{bicommutant}, and is therefore
a diagonal operator, so
$$\omega_{N,i}(z)=\left(\mathbb E[R_N(z)]^{-1}\right)_{ii}
+\theta_i,\quad i=1,\dots,p.$$
 By Lemma \ref{l} and the considerations preceding it (especially \eqref{4.6} and 
\eqref{4.7}),
\begin{eqnarray*}
\lim_{z\to\infty}\frac{\omega_{N,i}(z)}z&=&(I_N)_{ii}=1,\\
\lim_{z\to\infty}\omega_{N,i}(z)-z&=&-(A_N+\mathbb E[U_N^*B_NU_N])_{ii}+\theta_i=-{\rm tr}_N(B_N),
\end{eqnarray*}
and
\begin{eqnarray*}
{\lim_{z\to\infty}z(\omega_{N,i}(z)-z+\mathrm{tr}_N(B_N))}
&=&-\mathbb E[(X_N-\mathbb E[(X_N)])^2]_{ii}\\
&=&-(\mathrm{tr}_N(B_N^2)-\mathrm{tr}_N(B_N)^2),
\end{eqnarray*}
for  $N\ge p$ and $i=1,\dots,p$. 
It follows that we have, as in \eqref{Nevanlinna}, Nevanlinna representations of the form
$$
\omega_{N,i}(z)=z-\mathrm{tr}_N(B_N)-\int_{[-m,m]}\frac{1}{z-t}\,d\sigma_{N,i}(t),\quad z\in\mathbb C\setminus[-m,m],
$$
where $\sigma_{N,i}$ is a positive measure on $[-m,m]$, with total mass $\mathrm{tr}_N(B_N^2)-\mathrm{tr}_N(B_N)^2$.  Similarly, the subordination function $\omega_1$ from \eqref{EqSubord+} can be written as
$$
\omega_1(z)=z-\int_\mathbb Rt\,d\nu(t)-\int_\mathbb R\frac{1}{z-t}\,d\sigma(t),\quad z\in\mathbb C^+.
$$
The hypothesis that the empirical eigenvalue distribution of $B_N$ converges to $\nu$ implies in particular
$\lim_{N\to\infty}\mathrm{tr}_N(B_N)=\int_\mathbb Rt\,d\nu(t).$ In addition, the fact that 
 $\lim_{N\to\infty}\omega_{N,i}=\omega_1$
uniformly  on compact subsets of $\mathbb{C}\setminus[-m,m]$ implies that $\sigma$ is supported in $[-m,m]$ and that $\lim_{N\to\infty}\sigma_{N,i}=\sigma$ in the weak${}^*$-topology. We also have
$$\lim_{N\to\infty}\|\sigma_{N,i}\|=
\int_\mathbb Rt^2\,d\nu(t)-\left(\int_\mathbb Rt\,d\nu(t)\right)^2.$$
 Lemma 
\ref{conv}, applied  to the sequence $\rho_{N,i}=\sigma_{N,i}-\sigma$ yields positive numbers  
$\{v_{N,i}\}_{N\ge s}$ such that $\lim_{N\to\infty}v_{N,i}=0$ and
\begin{eqnarray}\label{for-e-vectors}
|\omega_{N,i}(z)-\omega_1(z)|&<&v_{N,i}\frac{|z|^2+2\sup_{N\in\mathbb N}(\mathrm{tr}_N(B_N^2)-\mathrm{tr}_N(B_N)^2)}{(\Im z)^2}\\
& & \mbox{}+\left|\int_\mathbb R t\,d\nu(t)-\mathrm{tr}_N(B_N)\right|,\quad z\in\mathbb{C}^+.\nonumber
\end{eqnarray}
We can now estimate 
\begin{eqnarray*}
\left|\mathbb E\left[R_N(z)_{ii}\right]-\frac{1}{\omega_1(z)-\theta_i}\right|&=&
\left|\frac{1}{\omega_{N,i}(z)-\theta_i}-\frac{1}{\omega_1(z)-\theta_i}\right|\\
&=&\frac{|\omega_{N,i}(z)-\omega_1(z)|}{|\omega_{N,i}(z)-\theta_i||\omega_{1}(z)-\theta_i|}\\
&<&\frac{|\omega_{N,i}(z)-\omega_1(z)|}{|\Im z|^2}\\
&\leq&a_N(1+|z|)^4\cdot(1+|\Im z|^{-4}),
\end{eqnarray*}
where
$$a_N=b_N(1+2\sup_{N\in\mathbb N}(\mathrm{tr}_N(B_N^2)-\mathrm{tr}_N(B_N)^2))^2,$$
and
$$b_N=\max\left\{v_{N,1},\dots,v_{N,p},\left|\int_\mathbb{R}t\,d\nu(t)-{\rm{tr}}_N(B_N)\right|\right\}.$$
The proposition follows.
\end{proof}
\begin{cor}We have
$$\lim_{N\to\infty}\limsup_{y\to0}\left|\int_\mathbb{R}\Delta_{i,N}(t+iy)h(t)\,dt\right|=0,\quad i=1,\dots,p.$$
\end{cor}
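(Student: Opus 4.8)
The plan is to combine the uniform majorization of Proposition~\ref{estimfonda} with Lemma~\ref{HT}, whose decisive feature is that the constant it produces does not depend on the particular analytic function being integrated.

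First I would check that, for each $N\ge p$ and each $i\in\{1,\dots,p\}$, the function $\Delta_{i,N}$ is analytic on $\mathbb C\setminus\mathbb R$: the map $z\mapsto\mathbb E[R_N(z)_{ii}]$ is analytic off $\sigma(X_N)\subseteq\mathbb R$, while $z\mapsto 1/(\omega_1(z)-\theta_i)$ is analytic on $\mathbb C^+$ because $\Im\omega_1(z)>0$ there and $\theta_i\in\mathbb R$; since both terms satisfy $f(\overline z)=\overline{f(z)}$, the difference $\Delta_{i,N}$ extends analytically to $\mathbb C^-$ by Schwarz reflection. Now fix $i$ and, for those $N$ with $a_N>0$, set $\widetilde\Delta_{i,N}=a_N^{-1}\Delta_{i,N}$ (if $a_N=0$ then $\Delta_{i,N}\equiv 0$ by Proposition~\ref{estimfonda} and the corresponding term is trivially zero). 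By Proposition~\ref{estimfonda},
$$\bigl|\widetilde\Delta_{i,N}(z)\bigr|\le(1+|z|)^4\bigl(1+|\Im z|^{-1}\bigr)^4,\quad z\in\mathbb C\setminus\mathbb R,$$
which is of the form $(|z|+K)^\alpha P(|\Im z|^{-1})$ required in Lemma~\ref{HT}, with $K=1$, $\alpha=4\ge 1$, and $P(t)=(1+t)^4$ a polynomial with nonnegative coefficients. Crucially, none of these data depends on $N$.

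Since $h\in\mathcal D(\mathbb R)$, Lemma~\ref{HT} then produces a constant $c>0$, \emph{independent of $N$}, with $\limsup_{y\to 0^+}\bigl|\int_{\mathbb R}h(x)\widetilde\Delta_{i,N}(x+iy)\,dx\bigr|\le c$ for every admissible $N$. Multiplying by $a_N$ gives $\limsup_{y\to 0^+}\bigl|\int_{\mathbb R}\Delta_{i,N}(x+iy)h(x)\,dx\bigr|\le c\,a_N$, and letting $N\to\infty$ while using $\lim_{N\to\infty}a_N=0$ (again Proposition~\ref{estimfonda}) yields the claim; the case $y\to 0^-$ reduces to the previous one by conjugation, since $h$ is real-valued. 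There is no genuine obstacle: the only delicate point is the uniformity in $N$, and this is exactly what the "constant $c$ independent of $\Delta$" clause of Lemma~\ref{HT} provides, once one observes that the majorant in Proposition~\ref{estimfonda} carries $N$-independent exponents.
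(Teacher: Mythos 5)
Your proof is correct and follows essentially the same route as the paper: normalize $\Delta_{i,N}=a_N\widetilde\Delta_{i,N}$ so that Proposition~\ref{estimfonda} gives a bound on $\widetilde\Delta_{i,N}$ independent of $N$, invoke the $\Delta$-independence of the constant in Lemma~\ref{HT} to get $\limsup_{y\downarrow 0}\bigl|\int h\,\Delta_{i,N}(\cdot+iy)\bigr|\le c\,a_N$, and let $N\to\infty$. The paper states this more tersely but uses exactly the same mechanism.
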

\begin{proof}
The preceding proposition allows us to apply Lemma \ref{HT} to obtain a positive constant $c$ such that
$$
\limsup_{y\downarrow0}\left|\int_{\rho-\varepsilon}^{\rho+\varepsilon}\Delta_{i,N}(t+iy)h(t)\,dt\right|\le ca_N$$
for $N\ge p$ and $i=1,\dots,p$. The corollary follows.
\end{proof}
The preceding  result, combined with \eqref{fund}, shows that \eqref{sperantze} is equivalent to
\begin{equation}\label{inca una}
\lim_{y\downarrow0}\Im\frac1\pi\int_{\rho-\varepsilon}^{\rho+\varepsilon}
\frac{h(t)}{\omega_1(t+iy)-\theta_i}\,dt=-
\frac{\delta_{i_0i}}{\omega'_1(\rho)},\quad i=1,\dots,p.
\end{equation}
This is easily verified.  Indeed, denote by $\Omega_y$, $y>0$, the rectangle with vertices $\rho\pm\varepsilon/2\pm iy$.  Calculus of residues yields
$$\frac1{2\pi i}\int_{\partial\Omega_y}\frac1{\omega_1(z)-\theta_i}\,dz=
\frac{\delta_{i_0i}}{\omega'_1(\rho)},\quad i=1,\dots,p.
$$
On the other hand,
$$
\Im\frac1\pi\int_{\rho-\varepsilon}^{\rho+\varepsilon}
\frac{h(t)}{\omega_1(t+iy)-\theta_i}\,dt=\frac1{2\pi i}\int_{\rho-\varepsilon}^{\rho+\varepsilon}
\left[
\frac{h(t)}{\omega_1(t+iy)-\theta_i}-\frac{h(t)}{\omega_1(t-iy)-\theta_i}
\right]\,dt.
$$
Now we use the fact that $h=1$ on $(\rho-\varepsilon/2,\rho+\varepsilon/2)$ to conclude that
$$
\Im\frac1\pi\int_{\rho-\varepsilon}^{\rho+\varepsilon}
\frac{h(t)}{\omega_1(t+iy)-\theta_i}\,dt+\frac1{2\pi i}\int_{\partial\Omega_y}\frac1{\omega_1(z)-\theta_i}\,dz.
$$
is a sum of the following four integrals:
$$\Im\frac1\pi\int_{\rho-\varepsilon}^{\rho-\varepsilon/2}
\frac{h(t)}{\omega_1(t+iy)-\theta_i}\,dt,\quad
\Im\frac1\pi\int_{\rho+\varepsilon/2}^{\rho+\varepsilon}
\frac{h(t)}{\omega_1(t+iy)-\theta_i}\,dt,
$$
$$
\frac1{2\pi i}\int_{\rho-\varepsilon/2-iy}^{\rho-\varepsilon/2+iy}\frac1{\omega_1(z)-\theta_i}\,dz,\quad
\frac1{2\pi i}\int_{\rho+\varepsilon/2-iy}^{\rho+\varepsilon/2+iy}\frac1{\omega_1(z)-\theta_i}\,dz,
$$
all of which are easily seen to tend to zero as $y\downarrow0$. This completes the proof of 
\eqref{inca una} and therefore of \eqref{pr-a}.
We observe now that the proof of \eqref{pr-a} for $i\ne i_0$ uses only the fact that $\omega_1(\rho)
\ne\theta_i$.  Therefore switching the roles of $A_N$ and $B_N$ in this argument yields a proof of  
\eqref{pr-b} and completes the proof of part (3) of Theorem \ref{Main+} in this case if $k=1$ and
$\ell=0$. The case $\ell=1$ and $k=0$ follows by symmetry.

Assertion (4) of the Theorem follows from (3) simply because $E_{X_N}((\rho-\varepsilon,\rho+\varepsilon
))$ is a projection of rank one. Indeed, denote by $\{e_i\}_{i=1}^N$ the canonical basis in $\mathbb C^N$, 
so $A_Ne_i=\theta_ie_i$. Let $\xi$ be a unit vector in the range of 
$E_{X_N}((\rho-\varepsilon,\rho+\varepsilon))$, so $E_{X_N}((\rho-\varepsilon,\rho+\varepsilon))h=
\langle h,\xi\rangle\xi$ for every $h\in\mathbb C^N$. Direct calculation shows that 
$$
P_NE_{X_N}((\rho-\varepsilon,\rho+\varepsilon))P_Ne_{i_0}=\sum_{i=1}^p\langle e_{i_0},\xi\rangle
\langle\xi, e_i\rangle e_i
$$
and thus, almost surely for large $N$,
$$
\left\|\sum_{i=1}^p\langle e_{i_0},\xi\rangle
\langle\xi, e_i\rangle e_i-\frac{1}{\omega_1'(\rho)}e_{i_0}\right\|<\varepsilon.
$$
In particular, we obtain $\left||\langle e_{i_0},\xi\rangle|^2-1/\omega_1'(\rho)\right|<\varepsilon$,
which is precisely the first relation in (4). The case $k=0,\ell=1$ follows by symmetry.
\newline\noindent{\bf Step B:} In this step we prove (3) and (4) in the general case of spikes with higher multiplicities and arbitrary values for $k$ and $\ell$. We use an idea
 from  \cite{Capitaine11} to reduce the problem to the case  considered in Step A.
Given positive numbers $\eta$ and $\delta$, set
\begin{eqnarray}
A_{N,\eta}&=&A_N+\mathrm{Diag}(p\eta,(p-1)\eta,\dots,\eta,\underbrace
{
0,\dots,0
}_{N-p}),\label{perturbA}\\
B_{N,\delta}&=&B_N+\mathrm{Diag}(q\delta,(q-1)\delta,\dots,\delta,\underbrace
{
0,\dots,0
}_{N-q})\label{perturbB}
\end{eqnarray}
for $N\ge p+q$. These matrices have distinct spikes $\theta_i(\eta)=\theta_i+(p-i+1)\eta$ and 
$\tau_j(\delta)=\tau_j+(q-j+1)\delta$, respectively. The fact that $\omega_1$ is increasing and 
continuous at $\rho$ implies that, for sufficiently small $\eta$, there  exist exactly $k$ indices $i_1,
\dots,i_k$ such that the equations $\omega_1(t)=\theta_{i_n}(\eta)$ each have a solution $\rho_n=\rho_n(\eta)\in
(\rho-\varepsilon,\rho+\varepsilon)$, $n=1,2\ldots k$. Similarly, for sufficiently small $\delta$ there exist  $\ell$ indices 
$j_1,\dots,j_\ell$ and $\ell$ values $\rho_{k+n}=\rho_{k+n}(\delta)\in(\rho-\varepsilon,
\rho+\varepsilon)$ such that $\omega_2(\rho_{k+n}(\delta))=\tau_{j_n}(\delta)$, $n=1,\ldots,\ell$. The 
numbers $\eta$ and $\delta$ 
can be chosen such that the intervals $(\rho_n-2\eta,\rho_n+2\eta),$
$n=1,2,\ldots,k+\ell,$ are pairwise disjoint and contained in $(\rho-\varepsilon,\rho+\varepsilon)$.  We 
conclude that the arguments of Step A hold with $X_{N,\eta,\delta}=A_{N,\eta}+U_N^*B_{N,\delta}U_N$, 
$\rho_n$, and $\eta$ in place of $X_N$, $\rho$, and $\varepsilon$, respectively. Thus,
$$
\lim_{N\to\infty}
\left\|
P_NE_{X_{N,\eta,\delta}}((\rho_n-\eta,\rho_n+\eta))P_N-\frac1{\omega'_1(\rho_n)}E_{A_{N,\eta}}(\{\omega_1(\rho_n)\})
\right\|=0
$$
almost surely for $n=1,\dots,k+\ell$. We have
$$
\sum_{n=1}^{k+\ell}E_{X_{N,\eta,\delta}}((\rho_n-\eta,\rho_n+\eta))=E_{X_{N,\eta,\delta}}
((\rho-\varepsilon,\rho+\varepsilon))
$$
and also, noting that $E_{A_N,\eta}(\{\omega_1(\rho_n)\})=0$ for $n=k+1,\dots,k+\ell$,
$$
\sum_{n=1}^{k+\ell}E_{A_N,\eta}(\{\omega_1(\rho_n)\})=E_{A_N}(\omega_1(\rho))
$$
for small $\eta$. In addition, $1/\omega_1'(\rho_n)$ can be made arbitrarily close to 
$1/\omega_1'(\rho)$ by making $\eta$ sufficiently small. We conclude that
$$
\left\|
P_NE_{X_{N,\eta,\delta}}((\rho-\varepsilon,\rho+\varepsilon))P_N-\frac1{\omega'_1(\rho)}E_{A_{N}}
(\{\omega_{1}(\rho)\})\right\|<\varepsilon
$$
almost surely for large $N$ if $\eta$ is sufficiently small.  Clearly,
$$\|X_N-X_{N,\eta,\delta}\|\le p\eta+q\delta.$$
An application of  Lemma \ref{eigenspaces} shows that
\begin{equation}\label{diffofspectrproj}
\|E_{X_{N,\eta,\delta}}((\rho-\varepsilon,\rho+\varepsilon)-
E_{X_{N}}((\rho-\varepsilon,\rho+\varepsilon))\|
\end{equation} 
can be made arbitrarily small for appropriate choices of $\eta$ and $\delta$, uniformly in $N$.  The 
first inequality in (3) follows at once, and the second one is proved similarly.

We now verify assertion (4) when $\ell=0$. Let $\xi$ be a unit vector in the range of 
$E_{X_{N}}((\rho-\varepsilon,\rho+\varepsilon))$. 
Since the quantity in \eqref{diffofspectrproj} is small, we can find, almost surely for large 
$N$, unit vectors
$\xi_{\eta,\delta}^{(N)}\in E_{X_{N,\eta,\delta}}((\rho-\varepsilon,\rho+\varepsilon))$ such that 
$\lim_{\delta+\eta\to0}\|\xi^{(N)}-\xi_{\eta,\delta}^{(N)}\|=0$, uniformly in $N$. It suffices therefore to 
prove that
$$
\limsup_{N\to\infty}\left|\|E_{A_N}(\{\omega_1(\rho)\})\xi_{\eta,\delta}^{(N)}\|^2-
\frac{1}{\omega'_1(\rho)}\right|$$
can be made arbitrarily small for appropriate choices of $\eta$ and $\delta$. Write $\xi_{\eta,\delta}^{(N)}=
\xi_{1}^{(N)}+\cdots+\xi_{k}^{(N)}$ with $\xi_{n}^{(N)}$ in the range of $E_{X_{N,\eta,\delta}}(\{\rho_n\})$,
$n=1,\ldots,k$. The case of assertion (4) proved in Step A shows that for any $ \eta,\delta>0$ sufficiently
small,
$$
\lim_{N\to\infty}\|E_{A_{N,\eta}}(\{\omega_1(\rho_n)\})\xi_{n}^{(N)}\|^2-
\frac{\|\xi_{n}^{(N)}\|^2}{\omega_1'(\rho_n)}=0,\quad n=1,\dots,k.
$$
We also have $\lim_{N\to\infty}\|E_{A_{N,\eta}}(\{\theta_{i}(\eta)\})\xi_n^{(N)}\|=0$ for $i\not\in\{i_1,
\dots, i_k\}$. Since 
$$
E_{A_{N}}(\{\omega_1(\rho)\})=E_{A_{N,\eta}}(\{\omega_1(\rho_1),\dots,\omega_1(\rho_k)\}),
$$
the relation
\begin{eqnarray*}
\left|\|E_{A_{N}}(\{\omega_1(\rho)\})\xi_{\eta,\delta}^{(N)}\|^2-
\frac{1}{\omega_1'(\rho)}\right|
& \leq&
\left|\sum_{n=1}^k\|E_{A_{N,\eta}}(\{\omega_1(\rho_n)\})\xi_{n}^{(N)}\|^2-
\frac{\|\xi_{n}^{(N)}\|^2}{\omega_1'(\rho_n)}\right|\\
& &\mbox{}+
\sum_{n=1}^k\|\xi_{n}^{(N)}\|^2\left|\frac{1}{\omega_1'(\rho_n)}-\frac{1}{\omega_1'(\rho)}\right|
\end{eqnarray*}
implies
$$
\limsup_{N\to\infty}
\left|\|E_{A_{N}}(\{\omega_1(\rho)\})\xi_{\eta,\delta}^{(N)}\|^2-
\frac{1}{\omega_1'(\rho)}\right|\leq
\max_{1\leq n\leq k}\left|\frac{1}{\omega_1'(\rho_n)}-\frac{1}{\omega_1'(\rho)}\right|.
$$
The desired conclusion follows
by noting that $\omega_1'(\rho_n)=\omega_1'(\rho_n(\eta))$ can be made 
arbitrarily close to $\omega_1'(\rho)$. 
The second part of 
assertion (4) follows by symmetry.
\end{proof}

The proofs of the versions of Theorem \ref{MainX+} for the positive line and for the unit circle follow
the same outline. We avoid excessive repetition and only indicate the differences in the tools used 
throughout the proof.

\subsection{The multiplicative model $X_N=A_N^{1/2}U_N^*B_NU_NA_N^{1/2}$\label{sec:5.2}}
We use the notations from Subsection \ref{subsec:Xperturb+}.
As in the previous section, we assume that both $A_N$ and $B_N$ 
are diagonal matrices: 
$$
A_N=\displaystyle{\text{ Diag}(\theta_1,\ldots,\theta_p, \alpha _1^{(N)}, \ldots , \alpha_{N-p}^{(N)})},
$$
$$
B_N=\displaystyle{\text{Diag}(\tau_1,\ldots,\tau_q,\beta_1^{(N)},\ldots,\beta_{N-q}^{(N)})}.
$$
Since $\mu, \nu \neq \delta_0$, fix $\alpha \in \mathrm{supp}(\mu )\setminus \{0\}$ and
$\beta \in \mathrm{supp}(\nu )\setminus \{0\}$.
We use the following multiplicative decompositions: 
$$
A_N=A_N'A_N''=A_N''A_N',
$$
$$
A_N'=\text{ Diag}(\alpha , \ldots , \alpha , \alpha_1^{(N)}, \ldots , \alpha_{N-p}^{(N)}),
$$
$$
A_N''=\displaystyle{\text{ Diag}(\theta_1/\alpha,\ldots,\theta_p/\alpha, 1, \ldots , 1)},$$
$$B_N=B_N'B_N''=B_N''B_N',$$
$$B_N'=\text{ Diag}(\beta , \ldots , \beta , \beta_1^{(N)}, \ldots , \beta_{N-q}^{(N)}),$$
$$
B_N''=\displaystyle{\text{ Diag}({\tau_1}/\beta,\ldots,{\tau_q}/{\beta}, 1, \ldots , 1)}.
$$

As before, we write
$A_{N}''=P_N^*\Theta P_N+I_N-P_N^*P_N$, where $P_N$ is defined as in Section \ref{PLUS} and
\[
\Theta={\displaystyle {\text{ Diag}(\theta_{1}/\alpha,\ldots,\theta_{p}/\alpha)}.}
\]
Similarly, $B_{N}''=Q_N^*{T}Q_N+I_N-Q_N^*Q_N$, where $Q_N$ is defined as in Section \ref{PLUS} and
\[
{T}={\displaystyle{\text{Diag}(\tau_{1}/\beta,\ldots,\tau_{q}/\beta)}.}
\]

We discuss first the behavior of the model $A_N^{1/2}U_N^*B_N'U_NA_N^{1/2}$ with spikes only on 
$A_N$. The essential step is a reduction to a convergence problem for a sequence of matrices of fixed 
size.

\subsubsection{Reduction to the almost sure convergence of a $p\times p$ matrix\label{521}}
Recall that $K=\mathrm{supp}(\mu\boxtimes\nu)$.
Corollary 3.1 of \cite{ColMal11} yields the existence of positive random variables $\{\delta_N\}_{N
\in\mathbb N}$ such that
$$ 
\sigma((A_N^{\prime})^{1/2}U_N^*B_N'U_N(A_N')^{1/2})\subseteq K_{\delta_N}
$$ 
and $\lim_{N\to\infty}\delta_N=0$ almost surely.

We argue first that, in case $0\not\in\text{supp}(\mu\boxtimes\nu)$, it follows that $X_N$ is 
almost surely invertible for large $N$. Indeed, in this case, $0\not\in\text{supp}(\mu)\cup
\mathrm{supp}(\nu)$. Therefore, $A_N$ and $B_N$ are invertible for large $N$, and thus so is $X_N$.
This observation allows us to restrict the analysis to nonzero eigenvalues of $X_N$.

Denote $X_N'=A_N^{1/2}U_N^*B_N'U_NA_N^{1/2}.$
Fix $z\in\mathbb{C}\setminus( K_{\delta _N}\cup\{0\})$ so that the matrix $zI_N-
(A_N')^{1/2}U_N^*B_N'U_N(A_N')^{1/2}$ is invertible. Using Sylvester's identity $\det(I-XY)=
\det(I-YX)$, we obtain for large $N$
\begin{eqnarray*}
\det(zI_N-X_N') &=& z^N\det(I_N-{z}^{-1}A_N''(A_N')^{1/2}U_N^*B_N'U_N(A_N')^{1/2})\\
&=& z^N\det(I_N-A_N''+A_N''(I_N-z^{-1}(A_N')^{1/2}U_N^*B_N'U_N(A_N')^{1/2}))\\
&=& \det((I_N-A_N'')(I_N-{z}^{-1}(A_N')^{1/2}U_N^*B_N'U_N(A_N')^{1/2})^{-1}+A_N'')\\
&  & \times \det(zI_N-(A_N')^{1/2}U_N^*B_N'U_N(A_N')^{1/2}).
\end{eqnarray*}
The matrix $(I_N-A_N'')(I_N-\frac{1}{z}(A_N')^{1/2}U_N^*B_N'U_N(A_N')^{1/2})^{-1}+A_N''$ is of 
the form:
$$\left[
\begin{array}{cc} F_N(z) & * \\ 
0 & I_{N-p} \end{array}\right],
$$
where $F_N$ is the analytic function with values in $M_p(\mathbb{C})$
defined on $\mathbb{C}\setminus (K_{\delta_N}\cup\{0\})$ by 
\begin{equation}\label{MN'}
F_N(z):=(I_p-
\Theta )P_N\left(I_N-\frac{1}{z}(A_N')^{1/2}U_N^*B_N'U_N(A_N')^{1/2}\right)^{-1}\,P^*_N+\Theta ,
\end{equation}
and $\Theta$ is the diagonal $p\times p$ matrix defined earlier.
Thus, for large $N$, the nonzero eigenvalues of $X_N'$ outside $K_{\delta_N}$
are precisely the zeros of $\det (F_N)$ in that open set. As in Subsection \ref{PLUS}, the random
matrix functions sequence $\{F_N\}_{N}$ converges a.s. to a diagonal deterministic $p\times p$
matrix function:

\subsubsection{Convergence of $F_{N}$ } We start with the analogue of Proposition
\ref{estimationinmean}.

\begin{prop} \label{uniformconvergence'}
Fix a positive integer $p$, and let $C_N$ and $D_N$ be deterministic nonnegative diagonal $N\times N$ 
matrices with uniformly bounded norms such that the limits
$$
\eta_i=\lim_{N\to\infty}(C_N)_{ii}
$$
exist for $i=1,\ldots,p$. Suppose that the empirical eigenvalue distributions of $C_N$ and $D_N$ 
converge weakly to $\mu$ and $\nu$, respectively.
Then the resolvent 
$$
R_N(z)=\left(zI_N-C_N^{1/2}U_N^*D_NU_NC_N^{1/2}\right)^{-1},\quad z\in\mathbb C\setminus\mathbb R
$$
satisfies
$$
\lim_{N\to\infty}P_N\mathbb E\left[zR_N(z)\right]P_N^*=\mathrm{Diag}\left(\frac{1}{1-\eta_1
\omega_1(z^{-1})},\ldots,\frac{1}{1-\eta_p
\omega_1(z^{-1})}\right).
$$
\end{prop}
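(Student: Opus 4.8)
The plan is to follow the proof of Proposition~\ref{estimationinmean} line by line, replacing the Cauchy transform $G$ by the moment generating function $\psi$ and the additive subordination relation \eqref{subord1} by its multiplicative analogue \eqref{subord1'}. Since all the functions involved satisfy $f(\overline z)=f(z)^{*}$, it is enough to treat $z\in\mathbb C^{+}$. I would first reduce to the case $C_N>0$: replacing $C_N$ by $C_N+\varepsilon I_N$ turns $\mu$ into its pushforward $\mu^{(\varepsilon)}$ under $t\mapsto t+\varepsilon$, each $\eta_i$ into $\eta_i+\varepsilon$, and $\omega_1$ into the subordination function of $\mu^{(\varepsilon)}\boxtimes\nu$, which tends to $\omega_1$ locally uniformly as $\varepsilon\downarrow0$ by continuity of $\boxtimes$ and of the subordination functions (cf.\ the fixed point characterization in Subsection~\ref{subsubsec:eqX}); since moreover $\|X_N-X_N^{(\varepsilon)}\|\le c\sqrt\varepsilon$ uniformly in $N$, an $\varepsilon/3$ argument recovers the general statement from the invertible case, and this reduction keeps the quantitative constants below finite.

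Assume $C_N>0$ and set $b=zC_N^{-1}$, so that $b-U_N^{*}D_NU_N$ is invertible for $z\in\mathbb C^{+}$ and
$$R_N(z)=\left(zI_N-C_N^{1/2}U_N^{*}D_NU_NC_N^{1/2}\right)^{-1}=C_N^{-1/2}\left(b-U_N^{*}D_NU_N\right)^{-1}C_N^{-1/2}.$$
Lemma~\ref{bicommutant} now applies with $D_N$ in the role of $B_N$; its part~(2) gives $\mathbb E[R_N(z)]=C_N^{-1/2}G(b)C_N^{-1/2}\in\{C_N\}''$, so $\mathbb E[zR_N(z)]$ is block diagonal in the eigenbasis of $C_N$, while the bound $\|\mathbb E[R_N(z)]^{-1}\|\le(|z|^{2}+2|\Re z|C+C^{2})/\Im z$ (valid since $X_N$ is selfadjoint with $\sup_N\|X_N\|\le C$, as in Lemma~\ref{l}) shows $\mathbb E[R_N(z)]$ is invertible. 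I then introduce
$$\Omega_N(z)=G(b)^{-1}-b=C_N^{-1/2}\left(\mathbb E[R_N(z)]^{-1}-zI_N\right)C_N^{-1/2}\in\{C_N\}'',$$
and the scalar functions $\omega_{N,i}(z)=-(\Omega_N(z))_{ii}/z$, $1\le i\le p$.

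The heart of the argument, paralleling Lemma~\ref{approximatesubordination}, is the approximate subordination relation
$$\lim_{N\to\infty}\left\|\mathbb E[zR_N(z)]-\left(I_N-\omega_{N,i}(z)C_N\right)^{-1}\right\|=0,\qquad z\in\mathbb C^{+},\ 1\le i\le p.$$
To prove it, use \eqref{eq:second-commutation} to write, for a rank one $Y$, the commutator $\Omega_N(z)Y-Y\Omega_N(z)$ as $G(b)^{-1}$ times the expectation of $(R_N(z)-\mathbb E[R_N(z)])$ conjugated by $C_N^{1/2}$, times a matrix of rank at most two, times $(R_N(z)-\mathbb E[R_N(z)])$ again, times $G(b)^{-1}$; the variance estimates of Corollary~\ref{cor:variance-estimate} (applicable by Remark~\ref{4.13}) together with $\|G(b)^{-1}\|\le\varepsilon^{-1}\|\mathbb E[R_N(z)]^{-1}\|$ and $\|b\|\le|z|\varepsilon^{-1}$ yield $|k^{*}(\Omega_N(z)Y-Y\Omega_N(z))h|\le C(z,\varepsilon)N^{-1}\|Y\|$ for all unit vectors $h,k$. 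Since the diagonal entries of $\Omega_N(z)$ lie in its numerical range, Lemma~\ref{lem:W(T)} forces $\|\Omega_N(z)+z\omega_{N,i}(z)I_N\|\to0$, and conjugating by $C_N^{1/2}$ and inverting gives the displayed relation; in particular $(\omega_{N,i})_N$ is uniformly bounded on compact subsets of $\mathbb C^{+}$, hence a normal family.

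Finally, Voiculescu's asymptotic freeness result gives $\mathrm{tr}_N\mathbb E[zR_N(z)]\to zG_{\mu\boxtimes\nu}(z)=1+\psi_{\mu\boxtimes\nu}(1/z)=1+\psi_\mu(\omega_1(1/z))$ by \eqref{subord1'}. Combining this with the approximate subordination and with $\mu_{C_N}\to\mu$ weakly, any locally uniform limit $\widehat\omega$ of $(\omega_{N,i})_N$ satisfies $\psi_\mu(\widehat\omega(z))=\psi_\mu(\omega_1(1/z))$ on $\mathbb C^{+}$, and both $\widehat\omega(z)$ and $\omega_1(1/z)$ tend to $0$ as $z\to\infty$; since $\psi_\mu$ is injective near the origin, the identity theorem gives $\widehat\omega(z)=\omega_1(1/z)$, so $\omega_{N,i}\to\omega_1(1/\cdot)$ locally uniformly for each $i$. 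A second use of the approximate subordination then yields
$$P_N\mathbb E[zR_N(z)]P_N^{*}=P_N\left(I_N-\omega_{N,1}(z)C_N\right)^{-1}P_N^{*}+o(1)\longrightarrow\mathrm{Diag}\left(\frac{1}{1-\eta_1\omega_1(1/z)},\dots,\frac{1}{1-\eta_p\omega_1(1/z)}\right),$$
using that $(C_N)_{ii}\to\eta_i$ for $i\le p$ and that $(I_N-\omega_{N,1}(z)C_N)^{-1}$ is diagonal. \textbf{The main obstacle} is the approximate subordination step: unlike the additive model, here there is no selfadjoint linearization of the resolvent --- the natural object $(I_N-z^{-1}C_NU_N^{*}D_NU_N)^{-1}$ is not normal --- and the reduction to Lemma~\ref{bicommutant} via conjugation by $C_N^{1/2}$ becomes numerically unstable when $C_N$ has eigenvalues near $0$; this is exactly why the preliminary reduction to $C_N>0$, and the resulting care in letting $\varepsilon\downarrow0$ only after $N\to\infty$, are needed.
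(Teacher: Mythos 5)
Your proof is correct and follows essentially the same path as the paper's: diagonality of $\mathbb{E}[R_N(z)]$ via Lemma \ref{bicommutant}(2), an approximate subordination relation obtained from \eqref{eq:second-commutation} and the variance estimate of Corollary \ref{cor:variance-estimate} combined with Lemma \ref{lem:W(T)}, identification of the limit through Voiculescu's asymptotic freeness and the subordination identity \eqref{subord1'}, and a perturbation $C_N\mapsto C_N+\varepsilon I_N$ to remove the invertibility hypothesis. The only differences are cosmetic: you work with $\omega_{N,i}(z)$ where the paper works with $\omega_{N,i}(1/z)$, you set $\Omega_N=G(b)^{-1}-b$ where the paper takes $\Omega_N=G(b)^{-1}$ and subtracts $b$ later, and you perform the $\varepsilon$-regularization at the outset rather than at the very end.
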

\begin{proof}
We consider without loss of generality elements $z\in\mathbb C^+$. If $C_N$ is invertible, then 
Lemma \ref{bicommutant}, part (2), applied to $b=zC_N^{-1}$ implies that 
$\mathbb{E}[{ R}_N(z)]$ is diagonal. If $C_N$ is not invertible, then 
$$
\lim_{\varepsilon\downarrow0}\mathbb{E}[(zI_N-(C_N+\varepsilon I_N)^{1/2}U_N^*D_NU_N
(C_N+\varepsilon I_N)^{1/2})^{-1}]=\mathbb{E}[{ R}_N(z)].
$$
The limit of diagonal matrices is diagonal, so $\mathbb{E}[{R}_N(z)]$ is diagonal.
Define
\begin{equation}\label{omegaN'}
\omega _{N,i}(z):=\frac{1}{(C_N)_{ii}}\left(1-\frac{z}{\mathbb E\left[{R}_N\left(z^{-1}\right)\right]_{ii}}\right),\quad1\leq i\leq p.
\end{equation}
We prove the uniform convergence on compact subsets of $\mathbb{C}\setminus 
\mathbb{R}^+$ of the sequences $\{\omega _{N,i}\}_{N\geq p}$ of analytic functions to
$\omega_1$. The multiplicative counterpart of Lemma \ref{approximatesubordination} is as follows:
\begin{lem} \label{approximatesubordination'}
Assume that $C_N\ge\varepsilon I_N$ for some $\varepsilon>0$.
We have: 
$$\lim_{N\to\infty}\left\| z\mathbb{E}[{R}_N(z)]-\left(I_N-\omega _{N,i}\left(z^{-1}\right)C_N\right)^{-1}\right\| 
=0,\quad z\in \mathbb{C}\setminus \mathbb{R},i\in\{1,\dots,p\}.$$
\end{lem}

\begin{proof}
For $z\in \mathbb{C}^+$, define 
$$
\Omega_N(z)=(C_N)^{-1}\mathbb E\left[{ R}_N\left(z^{-1}\right)\right]^{-1}=
\mathbb E\left[\left((zC_N)^{-1}-U_N^*D_NU_N\right)^{-1}\right]^{-1}.
$$ 
This function is well-defined by Lemma \ref{l}, and the second equality is justified by Lemma
\ref{bicommutant}(2). We apply Lemma \ref{bicommutant}(1) with $b=(zC_N)^{-1}$ to obtain
\begin{eqnarray*}
\lefteqn{Y(\Omega_N(z)-(zC_N)^{-1})-(\Omega_N(z)-(zC_N)^{-1})Y}\\
& = & \Omega_N(z)\mathbb E\left[\left(\left((zC_N)^{-1}-U_N^*D_NU_N\right)^{-1}-\Omega_N(z)^{-1}\right)\right.\\
& & \mbox{}\times(Y(zC_N)^{-1} -(zC_N)^{-1} Y)\\
& & \mbox{}\times\left.\left(\left((zC_N)^{-1}-
U_N^*D_NU_N\right)^{-1}-\Omega_N(z)^{-1}\right)\right]\Omega_N(z).
\end{eqnarray*}
Consider arbitrary vectors of norm one $h,k\in\mathbb C^N$ and an $Y$ of rank one to conclude the 
existence of rank one projections $p_1,p_2$ and rank 2 projections $q_1,q_2$ so that 
\begin{eqnarray*}
\lefteqn{\left|k^*(Y(\Omega_N(z)-(zC_N)^{-1})-(\Omega_N(z)-(zC_N)^{-1})Y)h\right|\leq}\\
& & \|\Omega_N(z)\|^2\|Y(zC_N)^{-1} -(zC_N)^{-1} Y\|\\
& & \mbox{}\times\mathbb E\left[\left\|p_1\left(\left((zC_N)^{-1}-U_N^*D_NU_N\right)^{-1}-
\Omega_N(z)^{-1}\right)q_1\right\|^2\right]^{1/2}\\
& & \mbox{}\times\mathbb E\left[\left\|q_2\left(\left((zC_N)^{-1}-U_N^*D_NU_N\right)^{-1}-\Omega_N(z)^{-1}\right)p_2\right\|^2\right]^{1/2}.
\end{eqnarray*}
Lemma \ref{l} yields $\|\Omega_N(z)\|<\|(zC_N)^{-1}\|+\|B_N'\|+
4c\|(1/\Im(1/z))C_N\|$, with $c\in(0,+\infty)$. Remark 
\ref{4.13} provides estimates
 for the last two factors. The estimate
$\|Y(zC_N)^{-1} -(zC_N)^{-1} Y\|<2\|Y\||z^{-1}|\varepsilon^{-1}$ is obvious. Thus, 
$$
\left|k^*(Y(\Omega_N(z)-(zC_N)^{-1})-(\Omega_N(z)-(zC_N)^{-1})Y)h\right|\leq\frac{C(z,
\varepsilon)}{N}\|Y\|
$$
for some constant $C(z,\varepsilon)$ independent of $N$. The $(i,i)$ entry of the matrix 
$\Omega_N(z)-(zC_N)^{-1}$ is precisely $e_i^*(\Omega_N(z)-(zC_N)^{-1})e_i$, which 
belongs to the numerical range of $\Omega_N(z)-(zC_N)^{-1}$. Lemma 
\ref{lem:W(T)} yields
$$
\|\Omega_N(z)-(zC_N)^{-1}-(e_i^*(\Omega_N(z)-(zC_N)^{-1})e_i)I_N\|<2\frac{C(z,\varepsilon)}{N}.
$$
Since $C_N$ is diagonal, the lemma follows by letting $N\to\infty$.
\end{proof}
The proof of Proposition \ref{uniformconvergence'} $C_N$ is bounded from
below by a positive multiple of $I_N$ is now completed by an application of the above lemma.
Indeed, using Biane's subordination formula \eqref{subord1'} and the 
asymptotic freeness result of Voiculescu \cite{Voiculescu91}, we obtain 
$\lim_{N\to\infty}\text{tr}_N(z\mathbb E[{R}_N(z)])=1+\psi_{\mu\boxtimes\nu}
(1/z)=1+\psi_\mu(\omega_1(1/z))$. Clearly, $\text{tr}_N((I_N-\omega_{N,i}
(1/z)C_N)^{-1})\to1+\psi_\mu(\lim_{N\to\infty}\omega_{N,i}(1/z))$. The result
follows by analytic continuation.
The general case follows by replacing a non-invertible $C_N$ by $C_N+\varepsilon I_N$. The 
approximation is uniform in $N$, so a normal family argument yields the desired result as $\varepsilon
\to0$.
\end{proof}

Observe that 
$$
F_N(z)=(I_p-\Theta )P_N(zR_N'(z))\,P_N^*+\Theta,
$$ 
where $R_N'$ denotes the resolvent of $(A_N')^{1/2}U_N^*B_N'U_N(A_N')^{1/2}$.
An application of Proposition \ref{uniformconvergence'} to $C_N=A_N'$ and $D_N=B_N'$, Remark 
\ref{4.13}, as well as of Lemma \ref{lem:extensionX}, yield the following result. We leave the details, 
similar to the ones in the proof of Proposition \ref{uniformconvergence}, to the reader.

\begin{prop} \label{estimationinmean'}
Almost surely, the sequence $\{F_N\}_N$ converges uniformly on the compact subsets of 
$\overline{\mathbb C}\setminus K$ to the analytic function $F$ defined on $\overline{\mathbb C}\setminus K$ by 
$$
F(z)=\mathrm{Diag}\left(\left(1-\frac{\theta_1}{\alpha}\right)\frac{1}{1-\alpha\omega_1(z^{-1})}+
\frac{\theta_1}{\alpha},\ldots,\left(1-\frac{\theta_p}{\alpha}\right)\frac{1}{1-\alpha\omega_1(z^{-1})}+
\frac{\theta_p}{\alpha}\right).
$$
\end{prop}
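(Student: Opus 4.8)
The plan is to follow the template of the proof of Proposition~\ref{uniformconvergence}, replacing its inputs by the multiplicative analogues established above. Recall from \eqref{MN'} that
$$
F_N(z)=(I_p-\Theta)\,P_N\bigl(zR_N'(z)\bigr)P_N^*+\Theta,
$$
where $R_N'$ is the resolvent of $X_N'=(A_N')^{1/2}U_N^*B_N'U_N(A_N')^{1/2}$ and $\Theta=\mathrm{Diag}(\theta_1/\alpha,\dots,\theta_p/\alpha)$. Thus it suffices to prove that, almost surely, the matrix-valued analytic function $z\mapsto P_N\bigl(zR_N'(z)\bigr)P_N^*$ converges uniformly on compact subsets of $\overline{\mathbb C}\setminus K$ to $(1-\alpha\omega_1(z^{-1}))^{-1}I_p$; multiplying by $I_p-\Theta$ and adding $\Theta$ then produces the diagonal matrix $F$, whose $i$-th entry is $(1-\theta_i/\alpha)(1-\alpha\omega_1(z^{-1}))^{-1}+\theta_i/\alpha$. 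Since the first $p$ diagonal entries of $A_N'$ all equal $\alpha$, Proposition~\ref{uniformconvergence'}, applied with $C_N=A_N'$, $D_N=B_N'$ and $\eta_1=\dots=\eta_p=\alpha$, yields
$$
\lim_{N\to\infty}P_N\mathbb E\bigl[zR_N'(z)\bigr]P_N^*=\frac1{1-\alpha\omega_1(z^{-1})}\,I_p,\qquad z\in\mathbb C\setminus\mathbb R.
$$

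Next I would pass from the expectation to the random matrix. Because $X_N'$ is a selfadjoint polynomial in the deterministic matrix $(A_N')^{1/2}$, of uniformly bounded norm, and in $U_N^*B_N'U_N$, Remark~\ref{4.13} lets us apply Lemma~\ref{concentration}(i) to $R_N'$; intersecting the resulting almost sure events over a countable dense subset $\mathcal D$ of $\mathbb C^+$, as in the proof of Proposition~\ref{uniformconvergence}, we obtain that, almost surely, $P_N\bigl(zR_N'(z)\bigr)P_N^*\to(1-\alpha\omega_1(z^{-1}))^{-1}I_p$ for every $z\in\mathcal D$. To promote this to uniform convergence on compact sets I would use Corollary~3.1 of \cite{ColMal11}, already invoked in Subsection~\ref{521}: there exist random $\delta_N\to0$ (a.s.) with $\sigma(X_N')\subseteq K_{\delta_N}$, so that on any compact $L\subset\overline{\mathbb C}\setminus K$ one has $\sup_{z\in L}\|R_N'(z)\|\le\mathrm{dist}(L,K_{\delta_N})^{-1}$ for large $N$, almost surely (uniform boundedness near $\infty$ being automatic since $zR_N'(z)\to I_N$). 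Hence $\{z\mapsto P_N(zR_N'(z))P_N^*\}_N$ is, almost surely, a normal family on the connected domain $\overline{\mathbb C}\setminus K$; a normal sequence converging pointwise on a set with an accumulation point in the domain converges uniformly on compact subsets, so $P_N(zR_N'(z))P_N^*$ converges locally uniformly to an analytic limit which coincides on $\mathcal D$, and therefore everywhere, with $(1-\alpha\omega_1(z^{-1}))^{-1}I_p$. This gives $F_N\to F$ uniformly on compact subsets of $\overline{\mathbb C}\setminus K$.

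It remains to verify that $F$, i.e. that $z\mapsto(1-\alpha\omega_1(z^{-1}))^{-1}$, is analytic on all of $\overline{\mathbb C}\setminus K$. If $z\in\overline{\mathbb C}\setminus K$ then $1/(z^{-1})=z\notin\mathrm{supp}(\mu\boxtimes\nu)$, so by Lemma~\ref{lem:extensionX} (parts (a) and (b)) the function $\omega_1$ is analytic at $z^{-1}$ and $1/\omega_1(z^{-1})\in\mathbb R\setminus\mathrm{supp}(\mu)$; since $\alpha\in\mathrm{supp}(\mu)$ this forces $1-\alpha\omega_1(z^{-1})\neq0$, so the denominators are nonvanishing on $\overline{\mathbb C}\setminus K$. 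I do not expect a genuine obstacle beyond this bookkeeping: the analytic substance is already contained in Proposition~\ref{uniformconvergence'} and in the subordination identity \eqref{subord1'}, and the only points requiring mild care are the behaviour at $z=\infty$ (where $zR_N'(z)$, $\omega_1(0)=0$ and $(1-\alpha\omega_1(z^{-1}))^{-1}$ all consistently give $I_p$) and, when $0\notin K$, at $z=0$, which is handled by the continuity in Lemma~\ref{lem:extensionX}(a) together with the invertibility of $X_N'$ noted in Subsection~\ref{521}.
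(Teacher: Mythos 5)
Your proof is correct and follows exactly the route the paper indicates: apply Proposition~\ref{uniformconvergence'} with $C_N=A_N'$, $D_N=B_N'$, use Remark~\ref{4.13} with Lemma~\ref{concentration} and a normal-family argument (as in Proposition~\ref{uniformconvergence}), and invoke Lemma~\ref{lem:extensionX} to check nonvanishing of $1-\alpha\omega_1(z^{-1})$ on $\overline{\mathbb C}\setminus K$. The paper leaves these details to the reader, and you have filled them in faithfully, including the minor checks at $z=\infty$ and $z=0$.
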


\subsubsection{Proofs of the main results for the positive multiplicative model.}
\begin{proof}
[Proof of Theorem {\rm\ref{MainX+}}, parts {\rm (1)} and {\rm(2)}---eigenvalue behaviour.] 
$\frac{}{}$
\newline\noindent{\bf Step 1.} We prove our result for the model $A_N^{1/2}U_N^*B_N'U_N
A_N^{1/2}$ in which only $A_N$ has spikes. 
We consider the almost sure event, whose existence is guaranteed by Proposition 
\ref{estimationinmean'}, on which there exist a random sequence 
$\{\delta_N\}_{N}\subset(0,+\infty)$ converging to zero so that:
\begin{itemize}
\item $\sigma(A_N'^{1/2}U_N^*B_N'U_NA_N'^{1/2})\subseteq K_{\delta_N}$, 
and
\item the sequence $\{F_N(z)\}_{N\ge p}$ converges to
$$
F(z) = 
\mathrm{diag}\left(\left(1-\frac{\theta_1}{\alpha}\right)\frac{1}{1-\alpha\omega_1\left(z^{-1}\right)}+\frac{\theta_1}{\alpha}, \ldots ,\left(1 - \frac{\theta_p}{\alpha}\right)\frac{1}{1-\alpha \omega_1 \left(z^{-1}\right)}+\frac{\theta_p}{\alpha}\right).
$$ 
uniformly on the compact subsets of $\mathbb C\setminus K$.
\end{itemize}
On this event, we apply Lemma \ref{alt-Benaych-Rao} with $\gamma =\mathbb R$, the 
sequence $\{F_N\}_{N\ge p}$ and its uniform on compacts limit $F$. We argue first that the function 
$F_N(z)$ given by equation \eqref{MN'} is invertible for $z\not\in\mathbb R$. Indeed, the relations 
preceding \eqref{MN'} imply that, if $F_N(z)$ is not invertible, then $z$ is an eigenvalue of the
selfadjoint matrix $X_N'$, and hence it is a real number. This verifies hypothesis (2). Hypotheses (1)
and (3) follow from Proposition \ref{estimationinmean'}. Finally, $F(\infty)=I_p$,
$$
(F'(z))_{jj}=\frac{\omega_1'(1/z)(\theta_j-\alpha)}{z^2(1-\alpha\omega_1(1/z))^2},
$$
and the zeros of $\omega_1'$ are simple by the Julia-Carath\'eodory Theorem.
Thus,
Lemma \ref{alt-Benaych-Rao} applies to $F_N$ and $F$.

For almost every $\delta>0$,
the boundary points of $K_{\delta}$ are not zeroes of
$\det(F)$. When this condition is satisfied, Lemma \ref{alt-Benaych-Rao} yields precisely
the conclusion of Theorem \ref{MainX+} (1)--(2), when $q=0$. Indeed, as noted above, 
the nonzero eigenvalues of $X_N'$ 
in $\mathbb{C}\setminus K_{\delta}$ are exactly the zeroes of $\det(F_{N})$, and the set of points $z$
such that $F(z)$ is not invertible is precisely $\bigcup_{i=1}^pv_1^{-1}(\{1/
\theta_i\})$. This completes the first step.
\newline\noindent{\bf Step 2.} This is completely analogous to the reasoning
from the second step of the proof of Theorem \ref{Main+}(1)--(2). We omit the details.
\end{proof}

\begin{proof}[Proof of Theorem {\rm\ref{MainX+}}, parts {\rm (3)} and {\rm(4)}---eigenvspace behaviour.] 
$\frac{}{}$
\newline\noindent{\bf Step A.} We assume first that $\theta_1>\cdots>\theta_p>0$,
$\tau_1>\cdots>\tau_q>0$, $\ell=0$ and $k=1$.  Step A of the proof of Theorem \ref{Main+} is 
modified as follows: $X_N$ is replaced by $A_N^{1/2}U_N^*B_NU_NA_N^{1/2}$, the analogue of Lemma 
\ref{inegconc} holds with the constant $C$ replaced by $\sup_N\|A_N\|\|B_N\|$, and Proposition 
\ref{estimfonda} is replaced by the following statement.

\begin{prop}\label{estimfondaX}
There is a polynomial $P$ with nonnegative coefficients,  a sequence $\{a_N\}_N$ of nonnegative
real numbers converging to zero when $N$ goes to infinity and some nonnegative integer number 
$t$, such that for every $i=1,\ldots,p$ and  $z\in\mathbb{C}\setminus \mathbb{R}$,
\begin{equation}\label{fundX} 
\mathbb{E} [ R_N(z)_{ii}] = \frac{1}{z} \frac{1}{1-\theta_i\omega_1(1/z)}
+\Delta_{i,N}(z),
\end{equation}
with $$\left| \Delta_{i,N} (z)\right| \leq (1+\vert z\vert)^t P(\vert \Im z \vert^{-1})a_N$$

\end{prop}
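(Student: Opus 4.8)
The plan is to follow the skeleton of the proof of Proposition~\ref{estimfonda}, but to replace its Nevanlinna-measure argument by a cruder growth estimate combined with a normal-family argument of the type used for Lemma~\ref{convT}; this detour is forced because the multiplicative subordination functions on $[0,+\infty)$ need not have uniformly compactly supported representing measures. Throughout, $R_N(z)=(zI_N-X_N)^{-1}$ with $X_N=A_N^{1/2}U_N^*B_NU_NA_N^{1/2}$, and $m$ is chosen with $\sup_N\|A_N\|\,\|B_N\|\le m^2$, so that $X_N\ge0$ and $\|X_N\|\le m^2$; let $\omega_{N,i}$ be the functions from \eqref{omegaN'} with $C_N=A_N$ and $D_N=B_N$, so that $(A_N)_{ii}=\theta_i$ for $1\le i\le p$. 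First I would invert \eqref{omegaN'} into the exact identity
$$\mathbb{E}[R_N(z)]_{ii}=\frac{1}{z\left(1-\theta_i\,\omega_{N,i}(1/z)\right)},\qquad z\in\mathbb{C}\setminus\mathbb{R},$$
which yields
$$\Delta_{i,N}(z)=\frac{1}{z}\left(\frac{1}{1-\theta_i\omega_{N,i}(1/z)}-\frac{1}{1-\theta_i\omega_1(1/z)}\right)=\frac{\theta_i\left(\omega_{N,i}(1/z)-\omega_1(1/z)\right)}{z\left(1-\theta_i\omega_{N,i}(1/z)\right)\left(1-\theta_i\omega_1(1/z)\right)}.$$
The right-hand side is analytic on $\mathbb{C}\setminus\mathbb{R}$ and obeys $\Delta_{i,N}(\bar z)=\overline{\Delta_{i,N}(z)}$, so it is enough to estimate it for $z\in\mathbb{C}^+$.

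I would then bound the two denominator factors from below. Since $X_N$ is Hermitian, $\|\mathbb{E}[R_N(z)]\|\le\mathbb{E}\|R_N(z)\|\le1/\Im z$, so $\left|1-\theta_i\omega_{N,i}(1/z)\right|=\left(|z|\,|\mathbb{E}[R_N(z)]_{ii}|\right)^{-1}\ge\Im z/|z|$ for $z\in\mathbb{C}^+$; letting $N\to\infty$ and using $\omega_{N,i}\to\omega_1$ from Proposition~\ref{uniformconvergence'} gives $\left|1-\theta_i\omega_1(1/z)\right|\ge\Im z/|z|$ as well. For the numerator I would first record a crude bound on $\omega_{N,i}$, \emph{uniform in $N$}. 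The resolvent inequality $-\Im\mathbb{E}[R_N(\zeta)]\ge\left(|\zeta|^2+2|\Re\zeta|m^2+m^4\right)^{-1}(\Im\zeta)I_N$ for $\zeta\in\mathbb{C}^+$ (and its mirror on $\mathbb{C}^-$) gives, with $\zeta=1/w$,
$$|\mathbb{E}[R_N(1/w)]_{ii}|\ge\frac{|\Im w|}{1+2m^2|w|+m^4|w|^2},\qquad w\in\mathbb{C}\setminus\mathbb{R},$$
whence, by \eqref{omegaN'}, $|\omega_{N,i}(w)|\le\theta_i^{-1}\bigl(1+|w|(1+2m^2|w|+m^4|w|^2)/|\Im w|\bigr)$; the same bound holds for $\omega_1$, which is the pointwise limit of the $\omega_{N,i}$. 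Hence $|\omega_{N,i}(w)-\omega_1(w)|\le C(1+|w|)^{3}(1+|\Im w|^{-1})$ with $C$ depending only on $m$ and $\min_i\theta_i$.

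Next, fixing an integer $d\ge4$, I would put
$$a_N=\max_{1\le i\le p}\ \sup_{w\in\mathbb{C}\setminus\mathbb{R}}\frac{|\omega_{N,i}(w)-\omega_1(w)|}{(1+|w|)^{d}(1+|\Im w|^{-1})^{d}},$$
which is finite by the last inequality and, crucially, tends to $0$: that ratio is at most $C(1+|w|)^{3-d}(1+|\Im w|^{-1})^{1-d}$, which is $<\varepsilon$ outside a compact subset $L_\varepsilon\subset\mathbb{C}\setminus\mathbb{R}$, while on $L_\varepsilon$ the uniform convergence $\omega_{N,i}\to\omega_1$ of Proposition~\ref{uniformconvergence'} makes it small for $N$ large---this is exactly the mechanism used to prove Lemma~\ref{convT}. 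Substituting $w=1/z$ (so that $|w|\le|\Im z|^{-1}$ and $|\Im w|^{-1}=|z|^2|\Im z|^{-1}$) in the formula for $\Delta_{i,N}$ and using the two lower bounds gives
$$|\Delta_{i,N}(z)|\le\frac{\theta_i\,|z|}{(\Im z)^2}\,|\omega_{N,i}(1/z)-\omega_1(1/z)|\le a_N\,(1+|z|)^{t}\,P(|\Im z|^{-1})$$
for a suitable nonnegative integer $t$ and a polynomial $P$ with nonnegative coefficients, both independent of $N$, $z$ and $i$, which is the assertion.

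The step I expect to be the main obstacle is proving $a_N\to0$. In the additive setting one could simply appeal to Lemma~\ref{conv}, because the relevant Nevanlinna measures were supported in a fixed compact interval; here the representing measures of $\omega_{N,i}$ and $\omega_1$ may have unbounded support (precisely when $0\in\mathrm{supp}(\mu\boxtimes\nu)$), so one must instead use the crude polynomial bound to confine, uniformly in $N$, the supremum defining $a_N$ to a compact subset of $\mathbb{C}\setminus\mathbb{R}$, and only then invoke the local uniform convergence. The remaining steps are routine manipulations with resolvents and with \eqref{omegaN'}.
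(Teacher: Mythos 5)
Your proof is correct, but it takes a genuinely different route from the paper's at the crucial step of proving $a_N\to 0$. The paper proceeds exactly as in Proposition~\ref{estimfonda}: since $\sigma(X_N)\subseteq[0,m]$ with $m=\sup_N\|A_N\|\|B_N\|$ fixed, the Pick functions $z\mapsto\omega_{N,i}(-1/z)$ and $z\mapsto\omega_1(-1/z)$ extend analytically, with real values, across the complement of the fixed interval $[-m,0]$, hence admit Nevanlinna representations $\int(t-z)^{-1}\,d\Phi_{N,i}(t)$ with representing measures supported in that fixed interval, and Lemma~\ref{conv} applies directly to $\Phi_{N,i}-\Phi$. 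You instead derive a crude, $N$-uniform polynomial majorant $|\omega_{N,i}(w)-\omega_1(w)|\le C(1+|w|)^3(1+|\Im w|^{-1})$ from the resolvent lower bound, and then split the supremum defining $a_N$ into an outer region, where the crude bound alone forces smallness, and a compact remainder, where locally uniform convergence of $\omega_{N,i}$ finishes the job -- the mechanism of Lemma~\ref{convT}. Both arguments work. One remark on your motivation: the worry that the representing measures of $\omega_{N,i}$ and $\omega_1$ may have unbounded support when $0\in\mathrm{supp}(\mu\boxtimes\nu)$ is in fact neutralized by the M\"obius change of variable $z\mapsto -1/z$, which relocates any cluster of singularities near the origin to an atom of $\Phi$ at $0\in[-m,0]$, so the paper's route does go through in full generality. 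Your alternative is nonetheless self-contained, avoids having to verify that $z\mapsto\omega_{N,i}(-1/z)$ is a Pick function real-valued on $\mathbb{R}\setminus[-m,0]$, and only costs a larger exponent $t$ and a higher-degree $P$ in the final estimate.
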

\begin{proof}
We set
$$
\omega_{N,i}(z)=
\frac{1}{\theta_i}\left(1-\frac{z}{\mathbb E\left[R_N\left(\frac1z\right)\right]_{ii}}\right),\quad z\in
\mathbb C\setminus[0,+\infty).
$$
As established in Proposition \ref{uniformconvergence'}, $\lim_{N\to\infty}\mathbb E[zR_N(z)]_{ii}=
(1-\theta_{i}\omega_1(1/z))^{-1}$. It follows that $\omega_{N,i}$ converges to $\omega_1$ uniformly 
on compacts of $\mathbb C\setminus[0,+\infty).$ Clearly, $\omega_{N,i}$ is also defined 
on a neighbourhood of zero. Note that
$$
\lim_{y\to+\infty}\omega_{N,i}(-1/iy)=\frac{1}{\theta_i}\left(1-
\frac{1}{\displaystyle\lim_{y\to+\infty}\mathbb E\left[-iyR_N\left({-iy}\right)\right]_{ii}}\right)=
\frac{1}{\theta_i}\left(1-\frac11\right)=0,
$$
and 
\begin{eqnarray*}
\lim_{y\to+\infty}iy\omega_{N,i}\left(\frac{-1}{iy}\right)&=&-\frac{1}{\theta_i}\lim_{y\to+\infty}
\frac{\mathbb E\left[iyX_N(iy+X_N)^{-1}\right]_{ii}}{\mathbb E\left[iy(iy+X_N)^{-1}\right]_{ii}}\\
&=& 
-\frac{1}{\theta_i}
\mathbb E[X_N]_{ii}.
\end{eqnarray*}
In addition, since $\|X_N\|\leq\|A_N\|\|B_N\|$ which is uniformly bounded, the map
$z\mapsto\omega_{N,i}(-1/z)$ is analytic and real on the complement of an interval $[-m,0]$,
with $m=\sup_N\|A_N\|\|B_N\|$. Thus, the maps $z\mapsto\omega_{N,i}(-1/z)$ and 
$z\mapsto\omega_1(-1/z)$ are Nevanlinna maps \eqref{Nevanlinna}, and hence can pe represented as
$$
\omega_{N,i}\left(\frac{-1}{z}\right)=\int_{[0,m]}\frac{1}{t-z}\,d\Phi_{N,i}(t),\quad z\in\mathbb C^+,
$$
and
$$
\omega_1\left(\frac{-1}{z}\right)=\int_{[0,m]}\frac{1}{t-z}\,d\Phi(t),\quad z\in\mathbb C^+.
$$
Here $\Phi_{N,i},\Phi$ are positive measures on $[0,m]$ $\Phi_{N,i}([0,m])=
\frac{1}{\theta_i}\mathbb E\left[X_N-\chi_{\{0\}}(X_N)\right]_{ii}$, and
$\Phi([0,m])=\frac{\int_\mathbb R t\,d(\mu\boxtimes\nu)(t)}{\int_\mathbb R t\,d\mu(t)}=\int_\mathbb 
Rt\,d\nu(t)$. Thus, Lemma \ref{conv}
applies to $\rho_{N,i}=\Phi_{N,i}-\Phi$ to allow the estimate
$$
\left|\omega_{N,i}\left(\frac{-1}{z}\right)-\omega_1\left(\frac{-1}{z}\right)\right|<
\frac{|z|^2+C}{(\Im z)^2}v_{N,i},
$$
with $C_i=\Phi([0,m])+\sup_N\Phi_{N,i}([0,m])$. We have
\begin{eqnarray*}
\left|\mathbb E\left[R_N(z)\right]_{ii}-\frac{1}{z}\cdot\frac{1}{1-\theta_i\omega_1\left(\frac{1}{z}\right)}\right|&=&\left|\frac{1}{z}\cdot\frac{1}{1-\theta_i\omega_{N,i}\left(\frac{1}{z}\right)}-\frac{1}{z}\cdot\frac{1}{1-\theta_i\omega_1\left(\frac{1}{z}\right)}\right|\\
&=&\frac{\theta_i}{|z|}\frac{\left|\omega_{N,i}\left(\frac{1}{z}\right)-\omega_1\left(\frac{1}{z}
\right)\right|}{\left|(1-\theta_i\omega_1\left(\frac{1}{z}\right))(1-\theta_l\omega_{N,i}\left(\frac{1}{z}\right))\right|}\\
&<&\frac{1}{|z|}\frac{|z|^2+C_i}{(\Im z)^4}\frac{(|z|+m)^4}{\theta_i\Phi_{N,i}([0,m])
\Phi([0,m])}v_{N,i}.
\end{eqnarray*}
The proposition follows.
\end{proof}
To complete the argument of Step A, it suffices now to observe that the residue of the function
$1/(z(1-\theta_i\omega_1(z^{-1})))$ at $\rho$ is equal to 
$$
\delta_{\omega_1(1/\rho),1/\theta_i}\frac{\omega_1(1/\rho) \rho}{\omega_1'(1/\rho)},\quad
i=1,\ldots,p.
$$
\newline\noindent{\bf Step B:} We use the same perturbation argument as in Step B of the proof of 
Theorem \ref{Main+}. We reduce the problem to the case of a spike with multiplicity one, to which we 
apply Step A. The only change from the argument in Step B of Theorem \ref{Main+} comes from 
the form of the subordination functions. We use perturbations \eqref{perturbA} and \eqref{perturbB} and 
define $X_{N,\delta,\eta}=A_{N,\delta}^{1/2}U_N^*B_{N,\eta}U_NA_{N,\delta}^{1/2}$. The 
quantity $\|X_{N,\delta,\eta}-X_N\|$ tends to zero uniformly in $N$ as $\delta+\eta\to0$.
The details are omitted.
\end{proof}

\subsection{The unitary multiplicative model $X_N=A_NU_N^*B_NU_N$}
We use the notation from Subsection \ref{subsec:XperturbT}. 
The tools used are identical to the ones used in 
the analysis of the positive model $X_N=A_N^{1/2}U_N^*B_NU_NA_N^{1/2}$. However, the domains of 
definition of the analytic transforms involved are different. We indicate the relevant differences. Choose 
$\alpha,\beta\in\mathbb T$ such that $1/\alpha\in\mathrm{supp}(\mu)$ and $1/\beta\in\mathrm{supp}
(\nu)$. The reduction to the almost sure convergence of a $p\times p$ matrix is performed the
same way, and the same concentration
inequality holds (this time with Lipschitz constant $\frac{2}{(1-|z|)^2}$) in
Lemma \ref{concentration}. The counterparts of Propositions \ref{uniformconvergence'} and 
\ref{estimationinmean'} hold, but
in Proposition \ref{estimationinmean'} we must consider $z\in\mathbb 
C\setminus\mathbb T$. The resolvent $R_N$ is defined by $R_N(z)=\left(zI_N-A_NU_N^*B_NU_N\right)^{
-1}$. The 
function $\omega_{N,i}$ defined by 
$$
\omega _{N,i}(z)=\frac{1}{(A_N)_{ii}}\left(1-\frac{z}{\mathbb E\left[{R}_N\left(z^{-1}\right)\right]_{ii}}\right)
,\quad z\in\mathbb D,
$$
is easily
seen to map $\mathbb D$ into itself and fix the origin. Indeed, $|(A_N)_{ii}|=|(A'_N)_{ii}|=1$. 
In the unitary version of Lemma \ref{approximatesubordination'},
no supplementary condition on $A_N'$ is required, and for $\Omega_N$ defined as in the
proof of  Lemma \ref{approximatesubordination'}, the estimate becomes $\|\Omega_N(z)\|
<2/|z|$ if $|z|<1$. The estimates for the corresponding resolvents are provided by 
Lemma \ref{l}.

\subsubsection{Proofs of the main results for the unitary multiplicative model.}
\begin{proof}
[Proof of Theorem {\rm\ref{MainX+}} for $\mathbb T$, parts {\rm (1)} and {\rm(2)}---eigenvalue behaviour.] 
We must now apply Lemma \ref{alt-Benaych-Rao} with $\gamma=\mathbb T$. It will be applied
to $\gamma=\mathbb T$, the sequence $\{F_N(z)\}_{N}$ defined by
$$
F_N(z)=z(I_p-\Theta)P_N\left(zI_N-A'_NU_N^*B_N'U_N\right)^{-1}P_N^*+\Theta,\quad
z\in\mathbb C\setminus\mathbb T,
$$
and the limit $F$ provided by Proposition \ref{estimationinmean'}.
Observe that $F_N(z)$ is invertible for $z\not\in\mathbb T$. Indeed, it is easy to see that,
if $F_N(z)$ is not invertible, $z$ belongs to the spectrum of the unitary operator $A_NU_N^*B_N'U_N$. 
The convergence of $F_N$
to $F$ follows from the appropriate version of Proposition \ref{estimationinmean'}.
Clearly, $F(z)$ is diagonal and, again by the Julia-Carath\'eodory Theorem, this time applied to the disk, 
its diagonal entries have only simple zeros. The remainder of the argument
requires no further adjustments.
\end{proof}
\begin{proof}[Proof of Theorem {\rm\ref{MainX+}}, parts {\rm (3)} and {\rm(4)}---eigenspace behaviour.] 

The relevant chan\-ges for this part of the proof occur in
Proposition \ref{estimfondaX}, where $(1-|z|)^{-1}$ must be used instead of $|\Im z|^{-1}$ and an 
application of Lemma \ref{convT} in place of Lemma \ref{conv}. Also, the perturbations \eqref{perturbA}
and \eqref{perturbA} are applied to the arguments of $\theta_i$ and $\tau_j$, respectively.
\end{proof}

\begin{rem}
It is easy to see that our results hold equally well when $A_N$ is random, independent of $U_N$, and has 
spikes $\theta_1(N),\dots,\theta_p(N)$ with the property that $\lim_{N\to\infty}\theta_i(N)=\theta_i$,
$1\leq i\leq p$, almost surely. Similarly, $B_N$ can be taken to be random, independent of $A_N$ and 
$U_N$, and with spikes $\tau_1(N),\dots,\tau_q(N)$ that converge almost surely to $\tau_1,\dots,
\tau_q$. The proofs use  the general form of Propositions \ref{estimationinmean} and 
\ref{uniformconvergence'}, respectively.
\end{rem}

\begin{rem}
The above remark allows us to treat sums or products of more than two spiked matrices.
More precisely, let $k\ge3$ be an integer, let $A_N^{(1)},\dots,A_N^{(k)}\in M_N(\mathbb C)$ be 
deterministic Hermitian matrices and let $U_N^{(1)},\dots,U_N^{(k)}\in {\rm U}(N)$ be independent 
Haar-distributed random matrices. Suppose
that the eigenvalue distribution of $A_N^{(j)}$ tends weakly to $\mu_j$ and $A_N^{(j)}$ has spikes
subject to the hypotheses
of Subsection \ref{subsec:+perturb}. Then 
$X_N^{(k)}=U_N^{(1){*}}A_N^{(1)}U_N^{(1)}+\cdots+U_N^{(k-1){*}}A_N^{(k-1)}U_N^{(k-1)}+A_N^{(k)}$ has 
asymptotic eigenvalue distribution equal to $\mu_1\boxplus\cdots\boxplus\mu_k$,
and the outliers in the spectrum of $X_N$ are described by an appropriate reformulation of 
Theorem \ref{Main+}. The result can be proved by induction on $k$ if we observe that $X_N^{(k+1)}$
has the same asymptotic eigenvalue distribution as $A_N^{(k+1)}+U_N^{*}B_NU_N$, 
where $B_N=X_N^{(k)}$ and $U_N$ is a Haar-distributed unitary independent from 
$U_N^{(1)},\dots,U_N^{(k-1)}\in {\rm U}(N)$. A similar remark applies to Theorem \ref{MainX+} in the
case of the circle. For the case of the multiplicative model on $[0,+\infty)$, the corresponding
generalization applies to a model of the form
$A_k^{1/2}U_{k-1}^{*}A_{k-1}^{1/2}\cdots U_2^{*}A_2^{1/2}U_1^{*}A_1
U_1A_2^{1/2}U_2\cdots A_{k-1}^{1/2}U_{k-1}A_k^{1/2}$.
\end{rem}

\begin{rem}\label{atoms}
The analogue of Theorem \ref{Main+} when $\mu=\delta_0$ was proved in \cite{BGRao09}
under the additional assumption that all eigenvalues of $A$ except for the spikes are equal to zero.
Our arguments provide a proof of this result without this additional assumption. Similar observations
apply to Theorem \ref{MainX+} when either $\mu$ or $\nu$ is a point mass. The only case in
which one needs to be more careful is that of Theorem \ref{MainX+} for the positive half-line when
$\mu$ or $\nu$ is equal to $\delta_0$. Suppose, for instance, that $\nu=\delta_0\neq\mu$. 
The eigenvalues of $X_N=A_N^{1/2}U_N^{*}B_NU_NA_N^{1/2}$ are uniformly approximated arbitrarily well
by the eigenvalues of 
$$
X_{N,\varepsilon}  =  A_N^{1/2}U_N^{*}(B_N+\varepsilon I_N)U_NA_N^{1/2}
 = X_N+\varepsilon A_N,
$$
and our methods do apply to the perturbed model $X_{N,\varepsilon}$, whose asymptotic eigenvalue 
distribution is $\mu\boxtimes\delta_\varepsilon$. The spikes are calculated explicitly by noting that
$\eta_{\mu\boxtimes\delta_\varepsilon}(z)=\eta_\mu(\varepsilon z)$, so $\omega_1(z)=
\varepsilon z$, $\omega_2(z)=\eta_\mu(\varepsilon z)/\varepsilon$. Thus, $v_1(z)=\varepsilon/z$
and $v_2(z)=\eta_\mu(\varepsilon/z)/\varepsilon$. The spikes of $X_{N,\varepsilon}$ are the solutions
of the equations $v_1(\rho)=1/\theta_i,$ $i=1,\dots,p$ and $v_2(\rho)=1/(\tau_j+\varepsilon)$, $j=1,
\dots,q$. The first set of equations yields the outliers $\varepsilon\theta_i$, $i=1,\dots,p$, while the 
second set of equations can be rewritten as
$$
\rho=(\tau_j+\varepsilon)\left[\frac\rho\varepsilon\eta_\mu\left(\frac\varepsilon\rho\right)\right],
\quad j=1,\dots,q.
$$
As $\varepsilon\to0$, we conclude that the spikes of $X_N$ are the numbers $\tau_j\gamma$, 
$j=1,\dots,q$, where $\gamma=\eta_\mu'(0)=\int_0^\infty t\,d\mu(t)$ is the first moment of $\mu$. 
If $\mu=\nu=\delta_0$, a similar argument shows that $X_N$ has no outliers at all, that is, $\lim_{
N\to\infty}\|X_N\|=0$ almost surely.
\end{rem}

\begin{example}\label{ex:5.14}
The following numerical simulation, due to Charles Bordenave, illustrates the appearance of two outliers 
arising from a single spike.  We take $N=1000$ and $X_N=A_N+
U_N^*B_NU_N$, where $A_N=2p-I_{N}$, with $p$ an orthogonal projection of rank $N/2=500$. The 
matrix $B_N$ is given by the formula
$$
B_N=\left[\begin{array}{cc}
\frac{W}{2(N-1)} & 0_{(N-1)\times 1}\\
0_{1\times (N-1)} & 10
\end{array}\right],
$$
with $W$ being sampled from a standard $999\times999$ GUE.

\noindent\begin{center}
     \includegraphics[width=12cm,height=7cm]{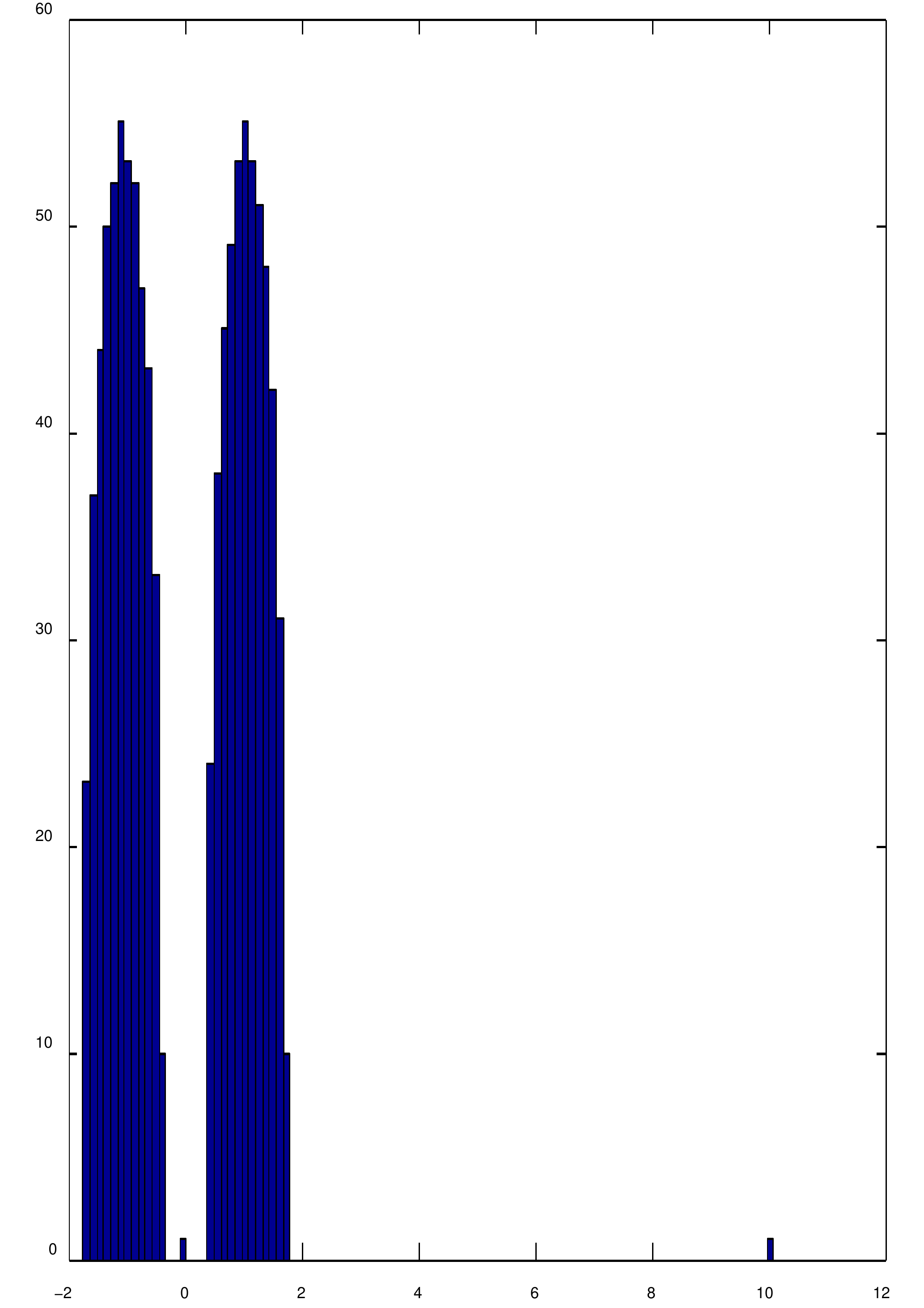}     
\par\end{center}
\end{example}


\begin{thebibliography}{10}

\bibitem{akhieser} Naum Ilich Akhieser, {\em The classical moment problem
and some related questions in analysis.} Translated by N. Kemmer.
Hafner Publishing Co., New York, 1965.


\bibitem{AGZ10} G. W. Anderson, A. Guionnet, and O. Zeitouni, \emph{An
introduction to random matrices}, Cambridge University Press, Cambridge,
2010.


\bibitem{BaiYao08b} Z. D. Bai and J. Yao, On sample eigenvalues in
a generalized spiked population model, \emph{J. Multivariate Anal}.
\textbf{106} (2012), 167--177. 



\bibitem{BBP05} J. Baik, G. Ben Arous, and S. P{\'e}ch{\'e}, Phase
transition of the largest eigenvalue for nonnull complex sample covariance
matrices, {\em Ann. Probab.} \textbf{33} (2005), 1643--1697.

\bibitem{BaikSil06} J. Baik and J. W. Silverstein, Eigenvalues of
large sample covariance matrices of spiked population models, \emph{J.
Multivariate Anal}. \textbf{97}(2006), 1382--1408.


\bibitem{Belinschi08} S. Belinschi, The Lebesgue decomposition of
the free additive convolution of two probability distributions, \emph{Probab.
Theory Related Fields} \textbf{142} (2008), 125--150.

\bibitem{AIHP}\bysame, A note on regularity for free convolutions, \emph{Ann. Inst. H. 
Poincar\'e Probab. Stat.} {\bf 42} (3), 635--648 (2006)

\bibitem{BelBer07} S. T. Belinschi and H. Bercovici, A new approach
to subordination results in free probability, \emph{J. Anal. Math.}
\textbf{101} (2007), 357--365.

\bibitem{B-B-imrn}\bysame, Partially defined semigroups relative
to multiplicative free convolution, \emph{Int. Math. Res. Not.} \textbf{2005},
65--101.

\bibitem{BPV12} S. T. Belinschi, M. Popa, and V. Vinnikov. Infinite
divisibility and a non-commutative Boolean-to-free Bercovici-Pata
bijection, {\em J. Funct. Anal.} \textbf{262} (2012), 94--123.


\bibitem{BercoviciVoiculescu} H. Bercovici and D. Voiculescu, Free
convolution of measures with unbounded support, \emph{Indiana Univ.
Math. J}. \textbf{42} (1993), 733--773.

\bibitem{BGRao09} F. Benaych-Georges and R. R. Nadakuditi, The eigenvalues
and eigenvectors of finite, low rank perturbations of large random
matrices, \emph{Advances in Mathematics} \textbf{227} (2011), 494--521.

\bibitem{Bhatia} Rajendra Bhatia, \emph{Matrix analysis}, Springer Verlag, New York, 1997.


\bibitem{Biane98}P. Biane, Processes with free increments, {\em Math.
Z}. \textbf{227} (1998), 143--174.

\bibitem{birman}M. S. Birman and M. Z. Solomyak, Double Stieltjes operator integrals. III, \emph{Prob. Math. Phys, Leningrad Univ.} \textbf{6} (1973), 27--53.

\bibitem{Capitaine11} M. Capitaine, Additive/multiplicative free
subordination property and limiting eigenvectors of spiked additive
deformations of Wigner matrices and spiked sample covariance matrices,
\emph{Journal of Theoretical Probability}, Volume 26 (3) (2013), 595--648.



\bibitem{Capitaine14} M. Capitaine,
Exact separation phenomenon for the eigenvalues of large Information-Plus-Noise type matrices. 
Application to spiked models,
\emph{Indiana Univ. Math. J.}  {\bf 63} (2014),  1875--1910.

\bibitem{CD07}
M.~Capitaine and C.~Donati-Martin.
\newblock Strong asymptotic freeness for {W}igner and {W}ishart matrices.
\newblock {\em Indiana Univ. Math. J.}, {\bf 56} (2):767--803, 2007.


\bibitem{CDF09} M. Capitaine, C. Donati-Martin, and D. F{\'e}ral,
The largest eigenvalues of finite rank deformation of large Wigner
matrices: convergence and nonuniversality of the fluctuations, \emph{Ann.
Probab}.\textbf{ 37} (2009), 1--47.

\bibitem{CDFF10} M. Capitaine, C. Donati-Martin, D. F{\'e}ral and
M. F\'evrier, Free convolution with a semi-circular distribution
and eigenvalues of spiked deformations of Wigner matrices, \emph{Electronic
Journal of Probability} \textbf{16} (2011), 1750--1792.


\bibitem{ColMal11} B. Collins and C. Male, The strong asymptotic
freeness of Haar and deterministic matrices, 
{\em Ann. Sci. \'Ec. Norm. Sup\'er. (4)}  {\bf 47} (2014), no. 1, 147--163.

\bibitem{FePe}D. F{\'e}ral and S. P{\'e}ch{\'e}, The largest eigenvalue
of rank one deformation of large Wigner matrices, \emph{Comm. Math.
Phys}. \textbf{272} (2007), 185--228.


\bibitem{Fulton98}W. Fulton, Eigenvalues of sums of Hermitian matrices
(after A. Klyachko), Ast\'erisque \textbf{252} (1997), 255--269,
S{\'e}minaire Bourbaki 1997/98.

\bibitem{FurKom81} Z. F\H{u}redi and J. Koml{\'o}s, The eigenvalues
of random symmetric matrices, \emph{Combinatorica} \textbf{1} (1981),
233--241.

\bibitem{GarnettBook} John B. Garnett, \emph{Bounded analytic functions,}
Academic Press , New York, 1981.

\bibitem{gesz-tse}F. Gesztesy and E. Tsekanovskii, On matrix-valued
Herglotz functions, \emph{Math. Nachr}. \textbf{218} (2000), 61--138.


\bibitem{numran}K. E. Gustafson and K. M. D. Rao, \emph{Numerical
range}, Springer-Verlag, New York, 1997.


\bibitem{HT}
U. Haagerup and S. Thorbj\o{}rnsen, 
{A new application of random matrices: ${\rm Ext}(C\sp *\sb {\rm red}(F\sb 2))$ is not a group.}  
\emph{Ann. of Math.} (2)  {\bf 162}  (2005),  no. 2, 711--775.

\bibitem{John} I. Johnstone, On the distribution of the largest eigenvalue
in principal components analysis, \emph{Ann. Stat}. \textbf{29} (2001),
295--327.

\bibitem{Kargin11} V. Kargin, Subordination of the resolvent for
a sum of random matrices,  Preprint (2011) arXiv:1205.0993[math.PR].

\bibitem{LV}P. Loubaton and P. Vallet, Almost sure localization of
the eigenvalues in a Gaussian information-plus-noise model. Application
to the spiked models,  {\em Electron. J. Probab.} {\bf 16} (2011), no. 70, 1934--1959.


\bibitem{Peche06} S. P{\'e}ch{\'e}, The largest eigenvalue of small
rank perturbations of Hermitian random matrices, \emph{Probab. Theory
Related Fields} \textbf{134} (2006), 127--173.

\bibitem{PRS} A. Pizzo, D. Renfrew, and A. Soshnikov, On finite rank
deformations of Wigner matrices, \emph{Annales de l'Institut Henri
Poincar\'e (B) Probabilit\'es et Statistiques}, {\bf 49}, (2013), no. 1, 64–94..

\bibitem{RaoSil09} N. R. Rao and J. W. Silverstein, Fundamental limit
of sample generalized eigenvalue based detection of signals in noise
using relatively few signal-bearing and noise-only samples, \emph{IEEE
Journal of Selected Topics in Signal Processing} \textbf{4} (2010),
468--480.

\bibitem{Remmert84} R. Remmert, \emph{Funktionentheorie. I}, Springer-Verlag,
Berlin, 1984.

\bibitem{Speicher93a} R. Speicher, Free convolution and the random
sum of matrices, \emph{Publ. Res. Inst. Math. Sci.} \textbf{29} (1993),
731--744.

\bibitem{Voiculescu86} D. Voiculescu, Addition of certain noncommuting
random variables, {\em J. Funct. Anal.} \textbf{66} (1986), 323--346.

\bibitem{V2} \bysame, 
{Multiplication of certain noncommuting random variables}.
{\em J. Operator Theory} {\bf 18}(1987), 223--235.

\bibitem{Voiculescu91} \bysame, Limit laws for random matrices and
free products,\emph{ Invent. Math.} \textbf{104} (1991), 201--220.

\bibitem{voic-fish1} \bysame, The analogues of entropy and of Fisher's
information measure in free probability theory. I, \emph{Comm. Math.
Phys}. \textbf{155} (1993), 71--92.

\bibitem{Voiculescu00} \bysame, The coalgebra of the free difference
quotient and free probability, \emph{Internat. Math. Res. Notices}
\textbf{2000 }79--106.

\bibitem{VDN92} D. V. Voiculescu, K. J. Dykema, and A. Nica, \emph{Free
random variables}, American Mathematical Society, Providence, RI,
1992.


\end{thebibliography}
\end{document}